\definecolor{rltred}{rgb}{0.75,0,0} 
	\definecolor{rltgreen}{rgb}{0,0.5,0}
	\definecolor{oneblue}{rgb}{0,0,0.75}
	\definecolor{marron}{rgb}{0.64,0.16,0.16}
	\definecolor{forestgreen}{rgb}{0.13,0.54,0.13}
	\definecolor{purple}{rgb}{0.62,0.12,0.94}
	\definecolor{dockerblue}{rgb}{0.11,0.56,0.98}
	\definecolor{freeblue}{rgb}{0.25,0.41,0.88}
	\definecolor{myblue}{rgb}{0,0.2,0.4}
	\definecolor{ccqqtt}{rgb}{0.8,0,0.2}
\numberwithin{equation}{section}
\newtheorem{step}{Step}
\newtheorem{lmm}{Lemma}
\newtheorem{rmrk}{Remark}
\newtheorem{prpstn}{Proposition}
\newtheorem{thrm}{Theorem}
\newcommand{\disp}{\displaystyle }
\def \k {{\kappa}}
\title{Exact simulation of Brownian diffusions with drift admitting jumps}
\author{David Dereudre\footnote{Laboratoire de Math\'ematiques Paul Painlev\'e,  UMR CNRS 8524, Universit\'e Lille1, 59655 Villeneuve d'Ascq Cedex, France. 
david.dereudre@univ-lille1.fr}, 
Sara Mazzonetto\footnote{Institut f\"{u}r Mathematik der Universit\"{a}t Potsdam. Karl-Liebknecht-Str. 24-25, 14476 Potsdam OT Golm, Germany, and 
Laboratoire de Math\'ematiques Paul Painlev\'e, UMR CNRS 8524, Universit\'e Lille1, 59655 Villeneuve d'Ascq Cedex, France. \mbox{mazzonet@uni-potsdam.de}}, 
Sylvie Roelly\footnote{Institut f\"{u}r Mathematik der Universit\"{a}t Potsdam. Karl-Liebknecht-Str. 24-25, 14476 Potsdam OT Golm, Germany, roelly@math.uni-potsdam.de}.}
\date{ }
\begin{document}

\maketitle

\textbf{Subject class}: {Primary 60H35, 65C05; Secondary 65C30, 68U20;}\\

\textbf{Keywords}: {Exact simulation methods; Skew Brownian motion; Skew diffusions; Brownian motion with discontinuous drift}\\


\begin{abstract}
In this paper, using an algorithm based on retrospective rejection sampling scheme introduced in \cite{BRP,EM}, we propose an exact simulation of a Brownian diffusion whose drift admits several jumps. We treat explicitly and extensively the case of two jumps, providing numerical simulations. Our main contribution is to manage the technical difficulty due to the presence of {\it two} jumps thanks to a new explicit expression of the transition density of the skew Brownian motion with two semipermeable barriers and a constant drift.
\end{abstract}

\section{Introduction}

\subsection{The context}
The aim of the paper is to develop an exact simulation method for the real-valued Brownian diffusion defined on the finite time interval $[0,T]$ 
as solution of 
\begin{equation} \label{sdeb}
\begin{cases}
dX_t=dW_t+b(X_t)dt, \qquad t\in [0,T],\\
X_0=x_0,
\end{cases}
\end{equation}
where the drift function $b$ is a bounded regular map on ${\mathbb R}$ 
except on a finite set $J:=\{z_1,\ldots,z_n \}\subset {\mathbb R} $ where jumps occur.


In \cite{BR,BRR,BRP} first algorithms for simulating exactly the solution of (\ref{sdeb}) were provided in the case of an
everywhere regular drift $b$. 
The method has been recently improved in the papers \cite{EM, taylorth} where the authors treat the case of a drift with a unique jump ($n=1$).
We provide here a  non trivial generalization of  these results, proposing a theoretical exact simulation schemes in the case of a drift with several jumps ($n>1$) and treating explicitely the case of two discontinuities ($n=2$), at points $z_1$ and $z_2 $.

As we will explain in more details in the next section, our approach, which  is parallel to that of \cite{EM}, is based on a new explicit representation of the transition density of a skew Brownian motion with constant drift and two semipermeable barriers.

\subsection{The involved processes: definitions and notations}


Consider $\mathcal{C}:=\mathcal{C}([0,T],\mathbb{R})$ the canonical continuous path space and  $\mathscr{C}$ the Borel $\sigma$-algebra on $\mathcal{C}$ induced by the supremum norm. Let us fix $x_0\in\mathbb{R}$ and denote by $\mathbb{P}$ the Wiener measure on $(\mathcal{C}, \mathscr{C} )$, law of a Brownian motion $(W_t)_{t}$  starting from $x_0$. $(X_t)_{t}$ will always denote the canonical process.

Let us introduce the various processes which will be involved in our paper. We will always work under assumptions which assure their (weak) existence and uniqueness.\\
 $\mathbb{P}_b$ denotes the law on $\mathcal{C}$ of the Brownian motion with drift $b$, weak solution of ($\ref{sdeb}$). \\  
For $0\neq |\beta|\leq 1 $ let  $\mathbb{P}^{(\beta)}$ denote the law on $\mathcal{C}$ of the $\beta$-skew Brownian motion (SBM), solution of the stochastic differential equation
\[
\begin{cases}
\disp dX_t=dW_t+ \beta \, d L^{z}_t(X), \\
\disp X_0=x_0, \quad L^{z}_t=\int_0^t \mathbb{I}_{\{X_s=z\}} \, d L^{z}_s,
\end{cases}
\]
with $z\in\mathbb{R}$. The process was first introduced in \cite{IMcK} via a trajectorial definition, while its semimartingale properties were studied in \cite{HS}. 
The point $z$ is also called a  {\it semipermeable barrier} since the process is partially reflected at that point.
Notice that for $x_0>z$, $\mathbb{P}^{(1)} $ is the law of the reflected Brownian motion on the semiaxis $[z, +\infty[$, and for $x_0<z$, $\mathbb{P}^{(-1)} $ is a reflected Brownian motion on the semiaxis $]-\infty,z]$.\\
$\mathbb{P}^{(\beta)}_b$ will denote the law of the $\beta$-skew Brownian motion with drift $b$.\\
Analogously $\mathbb{P}^{(\beta_1,\beta_2)}$ (respectively $\mathbb{P}^{(\beta_1,\beta_2)}_b $) is the $(\beta_1,\beta_2)-$skew Brownian motion with two semipermeable barriers at $z_1$ and $z_2$, $z_1<z_2$, (resp. with drift $b$). 
We computed in \cite{DMR} an explicit expression for the transition probability density $p^{(\beta_1,\beta_2)}_\mu(t,x,y)$ of $\mathbb{P}^{(\beta_1,\beta_2)}_\mu$, the $(\beta_1,\beta_2)-$SBM with constant drift $\mu$ in the simpler special case when permeabilities and drift lead to the same trend ($\beta_1\mu>0,\beta_2\mu>0$).
In this paper we propose in Theorem \ref{th:tdf2skewdrift} an explicit expression for  $p^{(\beta_1,\beta_2)}_\mu(t,x,y)$ in full generality.\\
The $(\beta_1,\beta_2)-$SBM is a particular multi-skew Brownian motion, as defined in \cite{Ram}:
let $J=\{z_1,\ldots,z_n \}$ be an ordered set of $n$ barriers (or jumps) with respective  skewness coefficient $\beta_1, \cdots, \beta_n \in [-1,1]$. The $(\beta_1,\ldots,\beta_n)-$SBM with drift $b$, $\disp{\mathbb{P}_b^{(\beta_1,...,\beta_n)}}$, is the weak solution of
\begin{equation}\label{skewram}
\begin{cases}
\disp dX_t= dW_t +  b(X_t)\,dt + \sum_{j=1}^n \beta_j \, d L^{z_j}_t(X),\\
\disp X_0=x_0, \quad L^{z_j}_t(X)=\int_0^t \mathbb{I}_{\{X_s=z_j\}} \, dL^{z_j}_s , \quad j \in \{1,\cdots,n\}.
\end{cases}
\end{equation} 
See also \cite{Ouk},  \cite{LeGall1} or \cite{LLP} for the definition and the study of more general skew processes.

\subsection{Retrospective rejection sampling scheme for the exact simulation of $\mathbb{P}_b$} \label{EAscheme}
We now shortly recall the main idea introduced in \cite{BR} which allows to simulate exactly the law of the diffusion $\mathbb{P}_b$ on $\mathcal{C}$. 
The first key point is the following: it should exist another probability measure $\mathfrak{Q}$ on $\mathcal{C}$, called instrumental measure, such that 
\begin{itemize}
\item it is known how to sample from $\mathfrak{Q}$ (actually from its finite-dimensional distributions) 
\item  $\mathbb{P}_b$ is absolutely continuous with respect to this instrumental probability measure and
\begin{equation} \label{RNder}
\mathbb{P}_b(d X)\propto \disp e^{-\Phi_b(X)} \, \mathfrak{Q}(dX) \textrm{ where } \Phi_b(X):=\int_0^T\phi^+_b(X_t)\, dt .
\end{equation}
\end{itemize}
In other words, the log-density of $\mathbb{P}_b$ wrt $\mathfrak{Q}$  has the form of an additive functional, whose integrand $\phi^+_b$ is moreover supposed to be  positive and bounded (on $\mathbb{R}\setminus J $ in our framework). The proportionality sign $\propto$ indicates that there might be a renormalizing constant.\\
Once one has found an instrumental measure, it is possible to construct a rejection sampling scheme according to Proposition 1 in \cite{BR} and Theorem 1 in \cite{BRR}: given a sample  path $\omega$ from the instrumental measure, one needs an event $A$ with conditional probability with respect to $\omega$ equal to the Radon-Nikodym derivative $e^{-\Phi_b(\omega)}$. Then one accepts or rejects $\omega$ as a sample from the target measure depending on whether the event $A|\omega$ is satisfied or not.
Indeed, the desired event $A$ is obtained as follows: let $\Psi$ be an homogeneous Poisson process of unit intensity on the rectangle $[0,T]\times [0,\|\phi_b^+\|_\infty]$, then one sets $A=\{ \textit{all points of } \Psi \textit{ are above the graph of } t \mapsto \phi^+_b(\omega_t)\}$.\\
The resulting procedure to accept a given path is called \emph{retrospective} rejection method: first realize the random Poisson field $\Psi$, which leads to $M$ points $(\tau_1,x_1),\ldots,(\tau_M,x_M) \in [0,T]\times [0,\|\phi_b^+\|_\infty]$. Then, for each time $\tau_j$, sample $X_{\tau_j}$ under the law $\mathfrak{Q}$. Finally check if there is some point $(\tau_j,x_j)$ satisfying $x_j<\phi^+_b(X_{\tau_j})$. If this is not the case, accept $\left(\tau_j,x_j\right)_{j=1,\ldots,M}$, else start again. 
The algorithm returns a skeleton of a path $(X_t(\omega))_{t\in[0,T]}$ under $\mathbb{P}_b$:
$\left(X_{\tau_1}(\omega), X_{\tau_2}(\omega), \ldots, X_{\tau_{M}}(\omega), X_T(\omega)\right).$
The skeleton can be enlarged/completed adding the position of the process at any intermediary time  $t\in[0,T]$, following a bridge dynamics.\\

The main issue is to find in our context the appropriate instrumental measure $\mathfrak{Q}$ and the appropriate functional $\Phi_b$ to apply the retrospective rejection sampling scheme. The procedure is the following.
\begin{step}\label{Girs}
Apply Girsanov theorem to obtain the Radon-Nikodym derivative of $\mathbb{P}_b$ with respect to $\mathbb{P}$:
\[
\mathbb{P}_b(dX)=\exp{\Big(\int_0^T b(X_t)dX_t-\frac{1}{2}\int_0^T b^2(X_t)dt\Big)} \mathbb{P}(dX).
\]
To replace the It\^o stochastic integral $\disp \int_0^T b(X_t)dX_t$ by a Stieltjes one, introduce the function $\disp B(x):= \int_0^{x} b(y)dy$, primitive of $b$, and apply the It\^o-Tanaka formula
\[
B(X_T)-B(X_0)= \int_0^T \frac{\partial_+ B(X_t)+\partial_-B(X_t)}{2}dX_t 
+ \frac{1}{2}\int_{\mathbb{R}\setminus J} L_t^x(X) b'(x)dx 
+ \sum_{j=1}^n \theta_j L_T^{z_j}(X) 
\]
where $L_t^y$ is the symmetric local time in $y$ at time $t$ and $\theta_j$ is the (half) $j-th$ jump height of $b$,
\begin{equation} \label{deftheta}
\theta_j:=\frac{b(z_j^+)-b(z_j^-)}{2}, \quad j=1, \cdots,n .
\end{equation}
\end{step}
\begin{step} Thanks to the occupation time formula, obtain the decomposition $(\ref{RNder})$
\end{step}
\begin{equation} \label{Q}
\mathbb{P}_b(d X)\propto \exp{\left(-\Phi_b(X)\right)} \ 
\underbrace{ \exp{\bigg(B(X_T)-B(X_0)-\sum_{j=1}^n \theta_j L^{z_j}_T(X)\bigg)} 
\mathbb{P}(d X)}_{\propto \ \mathfrak{Q}(d X)}
\end{equation}
where $\disp \Phi_b(X):=\int_0^T\phi^+_b(X_t)\, dt$ with 
\begin{equation}\label{phib}
\phi_b^+(x):= \frac{1}{2}\left(b^2(x)+b'(x)-\inf_{y \in \mathbb{R}\setminus J }\left(b^2(y)+b'(y) \right)\right).
\end{equation}

The case of one discontinuity $(n=1)$ has been treated in \cite{EM},\cite{taylorth} and \cite{PRT}.
Nevertheless the method to sample from $\mathfrak{Q}$ presented in \cite{taylorth} differs from ours, and relies on the explicit expression of the joint distribution 
of the Brownian motion and its local time at one point. To our knowledge this latter technique is not generalizable to the case of a drift with several discontinuities because the joint distribution 
of the Brownian motion and its local time at several points is not yet explicitly known. 

\subsection{Outline of the paper}
In section \ref{exactsimscheme}, we describe the theoretical scheme for the exact simulation of $\mathbb{P}_b$.
The method consists in drawing a path from the instrumental measure $\mathfrak{Q}$, which is  interpreted as the weak limit of a sequence of measures which are themselves instrumental measures for the exact simulation scheme of some SBM with drift $b$. This limit interpretation was first presented in \cite{EM}. 

In section \ref{bound} we will complete concretely the exact simulation scheme for $\mathbb{P}_b$ in the particular case of two jumps ($n=2$).

The last section is devoted to the exact simulation pseudo-codes, and to the comparison with Euler-Maruyama method, which sometimes is faster than the exact one but is much less precise due to the effect of the discontinuities of the drift.

\section{Exact simulation for Brownian diffusions: the theory in our context}\label{exactsimscheme}

We are interested in the exact simulation of the Brownian diffusion $\mathbb{P}_b$, weak solution of equation (\ref{sdeb}).\\
Let us recall our assumptions on the drift $b$: it is a bounded function of class $C^1$ with bounded derivative on $\mathbb{R}\setminus J$. At each of the $n$ discontinuity points $z_1,\ldots,z_n \in J$,  $b$ admits a jump with (half) height given by $\left(\theta_j\right)_{j=1,\ldots,n}$ defined in (\ref{deftheta}). Since only the left and right limit values $b(z_i^-)$ and $b(z_i^+)$ (and not $b(z_i)$) do matter, for simplicity, we will suppose that the value of the function $b$ at each point $z_i$ coincides with the average value
\begin{equation}\label{bzi}
b(z_i):=\frac{b(z_i^+)+b(z_i^-)}{2}, \quad z_i \in J.
\end{equation}
The primitive of $b$, called $B$, is well defined since $b$ is locally integrable. It is the difference of convex functions because $b$ has bounded variation. The latter property will be required to apply the It\^o-Tanaka formula (see e.g. \cite{kalle}, Theorem 22.5).
At the points of discontinuity,  $b'$ can be arbitrarily defined, for example as
$$
b'(z_i):= \frac{\partial_+ b(z_i) +\partial_- b(z_i)}{2}, \quad z_i \in J.
$$
Therefore  $\phi^+_b$ defined in $(\ref{phib})$ is non negative and bounded.

The boundedness of $b$  implies the existence and uniqueness of a weak solution for (\ref{sdeb}) (see \cite{SV}), even the existence and uniqueness of a strong solution thanks to Zvonkin and Yamada-Watanabe results, see \cite{YW}. 

The aim of this section is to propose a theoretical method to provide exact samples from the instrumental measure $\mathfrak{Q}$ defined in (\ref{Q}).
We generalize the scheme proposed in \cite{EM}, to our situation with several jumps. 
Indeed we are able to  find $\beta_1,\ldots,\beta_n\in [-1,1]$ 
which allow to develop the scheme presented in Section \ref{EAscheme}
\begin{enumerate}
\item there exists an instrumental measure $\mathfrak{Q}^{(\beta_1,\dots,\beta_n)}$ such that 
\[ 
\mathbb{P}_b^{(\beta_1,\ldots,\beta_n)}(d X)\propto \exp{\left(-\Phi_b(X)\right)} \ \mathfrak{Q}^{(\beta_1,\ldots,\beta_n)}(d X) \] 
where $\mathbb{P}_b^{(\beta_1,\ldots,\beta_n)}$ is the SBM with drift defined by \eqref{skewram};
\item the convergence of  $\mathbb{P}_b^{(\beta_1,\ldots,\beta_n)}$ to the Brownian motion with drift, $\mathbb{P}_b$ 
, when the $\beta_j$'s tend to $0$, implies the convergence of the instrumental measures $\mathfrak{Q}^{(\beta_1,\ldots,\beta_n)}$ to $\mathfrak{Q}$;
\item the exact simulation scheme {\em propagates} at the limit:
\[ \begin{split} 
\mathbb{P}^{(\beta_1,\ldots,\beta_n)}_b(d X) & \propto \exp{\left(-\Phi_b(X)\right)} \ \mathfrak{Q}^{(\beta_1,\ldots,\beta_n)} (d X) \\
 \beta_j \to 0  \downarrow  \qquad  &  \qquad \qquad \qquad \qquad \quad  \downarrow\\
\mathbb{P}_b(d X) & \propto \exp{\left(-\Phi_b(X)\right)} \ \mathfrak{Q}(d X)
\end{split}\]
(see Section~\ref{limitalgorithm}).
\end{enumerate}

\subsection{On the exact simulation of the skew Brownian motion with drift} \label{ESMbetab}
 For the purpose of exact simulation of $\mathbb{P}_b$, it is sufficient to provide an exact simulation scheme for $\mathbb{P}_b^{(\beta_1,\ldots,\beta_n)}$ with small skewness parameters $(\beta_j)_{j=1,\ldots,n}$.
The algorithm is theoretical because it relies on knowing an explicit expression of the transition densities $p^{(\beta_1,\ldots,\beta_n)}_\mu$ for a constant (or piecewise constant) $\mu$. The method introduced in \cite{DMR,LLP} (and improved in Theorem~\ref{th:tdf2skewdrift} in Section~\ref{Sec:3.1} for $n=2$) can be exploited to find it.

\subsubsection{The instrumental measure}
One knows by Girsanov theorem that 
\begin{equation} \label{itoskew}
\mathbb{P}^{(\beta_1,\ldots,\beta_n)}_b(dX) \propto \exp{\left(-\Phi_{b}(X)\right)} \exp{\Big(B(X_T)-B(X_0)- \sum_{j=1}^n \beta_j C_{\beta_j} L_T^{z_j}(X)\Big)} \ \mathbb{P}^{(\beta_1,\ldots,\beta_n)}(dX),
\end{equation}
where $C_{\beta_j}=b(z_j)+\frac{\theta_j}{\beta_j}$, and $b(z_j)$ is defined in (\ref{bzi}).\\
Except for the precise values  $\beta_j=-\frac{\theta_j}{b(z_j)}, j=1,\ldots,n$ for which $C_{\beta_j}=0$, the local time terms in \eqref{itoskew} do not vanish, which makes the simulation hard. To get around this difficulty, one 
looks for a real number $\mu$ such that 
\begin{equation} \label{RNderbeta}
\mathbb{P}_b^{(\beta_1,\ldots,\beta_n)}(dX) \propto \exp{\left(-\Phi_{b}(X)\right)} \ 
\underbrace{  \exp{\left(B^{(\mu)}(X_T)-B^{(\mu)}(X_0)\right)}\  \mathbb{P}^{(\beta_1,\ldots,\beta_n)}_\mu(dX),}
_{\propto \ \mathfrak{Q}^{(\beta_1,\ldots,\beta_n)} (dX)} 
\end{equation}
where $B^{(\mu)}(x):=B(x) - \mu x$ is a primitive of $b-\mu$. 
Thus, knowing how to sample under the measure $\mathfrak{Q}^{(\beta_1,\ldots,\beta_n)}$, it would be possible to simulate exactly $\mathbb{P}_b^{(\beta_1,\ldots,\beta_n)}$ following the method described at the beginning of Section \ref{EAscheme}.

In the next lemma, we show how to choose $\beta_1,\ldots,\beta_n \in [-1,1]$ and $\mu \in \mathbb{R}$ in such a way that \eqref{RNderbeta} holds.


\begin{lmm}\label{choicebeta}
The representation \eqref{RNderbeta} holds as soon as the skewness parameters and the constant $\mu$ satisfy
\begin{equation} \label{betaj}
\beta_j = \beta_1 \frac{\theta_j} {\theta_1 + \beta_1 \big(b(z_1)-b(z_j)\big)}, j=2,\ldots,n, \quad
\textrm{ and } \quad \mu=b(z_1)+\frac{\theta_1}{\beta_1}.
\end{equation}
\end{lmm}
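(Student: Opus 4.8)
The strategy is to compare the two Girsanov-type representations at our disposal. On one hand, equation~\eqref{itoskew} expresses $\mathbb{P}_b^{(\beta_1,\ldots,\beta_n)}$ against the driftless skew measure $\mathbb{P}^{(\beta_1,\ldots,\beta_n)}$ with a Radon-Nikodym density involving the local-time terms $\beta_j C_{\beta_j} L_T^{z_j}$ with $C_{\beta_j}=b(z_j)+\theta_j/\beta_j$. On the other hand, applying the very same Girsanov/It\^o--Tanaka computation to the \emph{shifted} drift $b-\mu$ around the skew measure $\mathbb{P}^{(\beta_1,\ldots,\beta_n)}_\mu$ with constant drift $\mu$ yields a representation of $\mathbb{P}_b^{(\beta_1,\ldots,\beta_n)}$ against $\mathbb{P}^{(\beta_1,\ldots,\beta_n)}_\mu$ whose density contains $B^{(\mu)}(X_T)-B^{(\mu)}(X_0)$ plus residual local-time terms of the form $\beta_j \widetilde C_j L_T^{z_j}$, where now $\widetilde C_j = (b(z_j)-\mu)+\theta_j/\beta_j$ (the jump heights $\theta_j$ of $b$ and of $b-\mu$ coincide, since $\mu$ is constant). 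The representation~\eqref{RNderbeta} holds precisely when \emph{all} these residual coefficients vanish simultaneously, i.e.\ when $\widetilde C_j = 0$, equivalently
\[
b(z_j) - \mu + \frac{\theta_j}{\beta_j} = 0, \qquad j = 1,\ldots,n.
\]

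So the proof reduces to solving this system of $n$ equations for the unknowns $\beta_1,\ldots,\beta_n,\mu$. First I would isolate $\mu$ from the equation with $j=1$: $\mu = b(z_1)+\theta_1/\beta_1$, which is exactly the second assertion in~\eqref{betaj} and leaves $\beta_1$ as a free parameter. Then I would substitute this value of $\mu$ into the $j$-th equation for $j\ge 2$,
\[
b(z_j) - b(z_1) - \frac{\theta_1}{\beta_1} + \frac{\theta_j}{\beta_j} = 0,
\]
and solve for $\beta_j$. Clearing denominators gives $\theta_j \beta_1 = \beta_j\bigl(\theta_1 + \beta_1(b(z_1)-b(z_j))\bigr)$, hence
\[
\beta_j = \beta_1\,\frac{\theta_j}{\theta_1 + \beta_1\bigl(b(z_1)-b(z_j)\bigr)},
\]
which is the first assertion in~\eqref{betaj}. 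This is an explicit formula, and for $\beta_1$ small enough (and $\theta_1\neq 0$, which is implicit since $z_1$ is a genuine discontinuity) the denominator is close to $\theta_1\neq 0$, so each $\beta_j$ is well defined and lies in $[-1,1]$; I would record this as the admissibility condition.

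The only genuinely non-routine point is justifying the intermediate It\^o--Tanaka representation of $\mathbb{P}_b^{(\beta_1,\ldots,\beta_n)}$ against $\mathbb{P}^{(\beta_1,\ldots,\beta_n)}_\mu$ cleanly — i.e.\ checking that the semimartingale decomposition of the canonical process under $\mathbb{P}^{(\beta_1,\ldots,\beta_n)}_\mu$ (Brownian motion, constant drift $\mu$, plus $\sum_j\beta_j\,dL^{z_j}$), combined with the It\^o--Tanaka formula applied to $B^{(\mu)}$ (a difference of convex functions, with the same jump points and jump heights $\theta_j$ as $B$), produces exactly the claimed density with residual local-time coefficient $\beta_j\bigl((b(z_j)-\mu)+\theta_j/\beta_j\bigr)$. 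This is the same calculation underlying~\eqref{itoskew} with $b$ replaced by $b-\mu$ and the reference skew measure carrying the extra constant drift, so it goes through verbatim; once it is in place, the lemma follows by the elementary algebra above. I expect this bookkeeping of the local-time coefficients to be the main thing to get right, everything else being a one-line substitution.
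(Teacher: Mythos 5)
Your proposal is correct and follows essentially the same route as the paper: the paper's proof likewise applies Girsanov to write $\mathbb{P}^{(\beta_1,\ldots,\beta_n)}_b$ against $\mathbb{P}^{(\beta_1,\ldots,\beta_n)}_\mu$ with residual local-time coefficients $\beta_j\bigl(C_{\beta_j}-\mu\bigr)$ (your $\widetilde C_j$ is exactly $C_{\beta_j}-\mu$), and then imposes $\mu=C_{\beta_1}=\cdots=C_{\beta_n}$, which yields \eqref{betaj} by the same elementary algebra you carry out.
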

\begin{proof} Girsanov theorem yields
\[
\mathbb{P}^{(\beta_1,\ldots,\beta_n)}_b(dX) \propto\exp{\bigg(B^{(\mu)}(X_T)-B^{(\mu)}(X_0)-\Phi_b(X)- \sum_{j=1}^n  \beta_j  \Big( C_{\beta_j} -\mu \Big) L_T^{z_j}(X)\bigg)} \ 
\mathbb{P}_\mu^{(\beta_1,\ldots,\beta_n)}(dX).
\]
To erase the coefficients in front of  the local times, it is sufficient  to set $\mu=C_{\beta_1}=\cdots=C_{\beta_n}$, which leads to the identities (\ref{betaj}).
\end{proof}

\begin{rmrk} \label{rem:limitbetapetit}
For our purpose, we are indeed free to let depend the coefficients $(\beta_j)_{j=2,\ldots,n}$ from $\beta_1$, as long as  $\lim_{\beta_1\to 0} \beta_j=0$ for $j=2,\ldots,n$. 
Notice that, for the above choice of $\beta_1,\ldots,\beta_n$ and $\mu$, since $\beta_j\mu=b(z_j)\beta_j+\theta_j$ one has $\lim_{\beta_1\to 0} \beta_j \mu=\theta_j$ for any $j=2,\ldots,n$. The latter limit will have some importance in the limit procedure we will apply next.
\end{rmrk}

\begin{rmrk}[Retrospective rejection sampling for $\mathbb{P}_b^{(\beta_1,\ldots,\beta_n)}$] \label{piecewisemu}
Let $\beta_1, \ldots, \beta_n \subset [-1,1]$ not necessarily small. Assume there exist two indices $j_1,j_2$ such that $C_{\beta_{j_1}}\neq C_{\beta_{j_2}}$ (and they do not vanish).
Identity \eqref{RNderbeta} still holds if one replaces the constant drift $\mu$ by a well chosen piecewise constant function. Therefore for the purpose of the retrospective rejection sampling one needs an explicit expression for the transition density of the skew Brownian motion with piecewise constant drift. It is a straightforward generalization of the results presented here.
An integral representation for the transition density of the one-skew Brownian motion with two-valued drift $\mu$ has been already given in \cite{LLP}. 
\end{rmrk}

\subsubsection{Simulation of $\mathfrak{Q}^{(\beta_1,\ldots,\beta_n)}$} \label{simQ}

To simulate $\mathfrak{Q}^{(\beta_1,\ldots,\beta_n)} (dX)$ means to sample from its finite dimensional marginals, that is to 
give a finite number of sample variates - called  skeleton $(X(t_1),\ldots, X(t_{M}), X_T)=(y_1, \ldots, y_{M},y)$ -  from the density
\begin{equation} \label{sklaw}
h^{\left(\beta_1,\ldots,\beta_n\right)}(y)\prod_{i=0}^{M-1} q^{\left(\beta_1,\beta_2,\ldots,\beta_n\right)}_{\mu}(t_{i+1}-t_i,T-t_i,y_i,y,y_{i+1})dy_1\ldots dy_{M}dy,
\end{equation}
where $t_0:=0$, $y_0:=X_0=x_0$ and $q^{(\beta_1,\ldots,\beta_n)}_\mu$ is the transition density of the bridge of $\mathbb{P}^{(\beta_1,\ldots,\beta_n)}_\mu$. 
The density function $h^{(\beta_1,\ldots,\beta_n)}(y)$ of the conditioned law of $X_T|{X_0=x_0}$ satisfies 
\[
h^{(\beta_1,\ldots,\beta_n)}(y)\propto \exp{\Big(B^{(\mu)}(y)-B^{(\mu)}(x_0)\Big)} p^{(\beta_1,\ldots,\beta_n)}_\mu(T,x_0,y),
\]
where $p^{(\beta_1,\ldots,\beta_n)}_\mu$ is the transition density of $\mathbb{P}^{(\beta_1,\ldots,\beta_n)}_\mu$, as already introduced.\\
The relationship between  $q^{(\beta_1,\ldots,\beta_n)}_\mu$ and $p^{(\beta_1,\ldots,\beta_n)}_\mu$ is, as usual for bridges, given by
\[\begin{split}
q^{(\beta_1,\ldots,\beta_n)}_\mu(t,T,x_1,x_2,y) & = \frac{p^{(\beta_1,\ldots,\beta_n)}_\mu(t,x_1,y) \ p^{(\beta_1,\ldots,\beta_n)}_\mu(T-t,y,x_2)}{p^{(\beta_1,\ldots,\beta_n)}_\mu(T,x_1,x_2)}.
\end{split}\]
Let us write for simplicity $p_\mu$ instead of $p^{(0,\cdots,0)}_\mu$ (resp. $q_\mu$ instead of $q^{(0, \cdots,0)}_\mu$) for the transition density of a Brownian motion with constant drift $\mu$ (resp. for the transition density of a Brownian bridge with constant drift $\mu$).
Recall also that $q_\mu$ does not depend on the drift $\mu$ and is equal to $q_0$, the transition density of the Brownian bridge.\\
The challenge is therefore to obtain an explicit expression for the transition density $p^{(\beta_1,\ldots,\beta_n)}_\mu$ involving an instrumental density from which it is known how to sample, namely $p_\mu$ the transition density of the Brownian motion with drift $\mu$
\begin{equation} \label{decomposition}
p^{(\beta_1,\ldots,\beta_n)}_\mu(t,x,y) =p_\mu(t,x,y) \ v^{(\beta_1,\ldots,\beta_n)}_\mu(t,x,y),
\end{equation}
The function $v^{(\beta_1,\ldots,\beta_n)}_\mu(t,x,y)$ has to be uniformly bounded as a function of $y$ (and as a function of $x$).
If such a decomposition exists the densities involved in (\ref{sklaw}) are given by
\begin{equation} \label{qbeta}
q^{(\beta_1,\ldots,\beta_n)}_\mu(t,T,x_1,x_2,y)
 = q_0(t,T,x_1,x_2,y) \frac{v^{(\beta_1,\ldots,\beta_n)}_\mu(t,x_1,y)v^{(\beta_1,\ldots,\beta_n)}_\mu(T-t,y,x_2)}{v^{(\beta_1,\ldots,\beta_n)}_\mu(T,x_1,x_2)},
\end{equation}
\[\textrm{and } \quad h^{(\beta_1,\ldots,\beta_n)}(y) \propto \exp{\left(B^{(\mu)}(y)-B^{(\mu)}(x_0)\right)} \ p_\mu (T,x_0,y) \ v^{(\beta_1,\ldots,\beta_n)}_\mu(T,x_0,y).\]
One may then apply the generalized rejection sampling method thanks to the uniform boundedness of the function $v^{(\beta_1,\ldots,\beta_n)}_\mu$.
This important property  also implies the integrability of $h^{(\beta_1,\ldots,\beta_n)}$, as we prove in the next lemma.
\begin{lmm}\label{hintegrable} Let $x_0\in\mathbb{R}$. Under our assumptions on the drift $b$,
for any choice of $\beta_1,\ldots,\beta_n\in[-1,1]$ and  $\mu\in \mathbb{R}$, 
the density function $y\mapsto h^{\left(\beta_1,\ldots,\beta_n\right)}(y)$ is integrable and therefore normalizable.
\end{lmm}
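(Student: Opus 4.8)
The plan is to show that $h^{(\beta_1,\dots,\beta_n)}$ is integrable by comparing it to the density $p_\mu(T,x_0,\cdot)$ of a Brownian motion with constant drift, which is manifestly integrable (indeed it is a probability density). First I would recall the formula
\[
h^{(\beta_1,\dots,\beta_n)}(y) \propto \exp\!\big(B^{(\mu)}(y)-B^{(\mu)}(x_0)\big)\, p_\mu(T,x_0,y)\, v^{(\beta_1,\dots,\beta_n)}_\mu(T,x_0,y),
\]
so that, up to the fixed constant $\exp(-B^{(\mu)}(x_0))$ and the renormalizing constant, it suffices to bound $\exp(B^{(\mu)}(y))\, v^{(\beta_1,\dots,\beta_n)}_\mu(T,x_0,y)$ against a constant times $\exp(c|y|)$ for some $c<\infty$; then multiplying by the Gaussian-type decay of $p_\mu(T,x_0,y)$ gives an integrable majorant. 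The factor $v^{(\beta_1,\dots,\beta_n)}_\mu(T,x_0,\cdot)$ is uniformly bounded in $y$ by the decomposition hypothesis recalled just before the lemma (this boundedness is exactly the property the authors flagged as "important"), so the only term that could grow is $\exp(B^{(\mu)}(y))$.

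The key step is therefore to control $B^{(\mu)}(y)=B(y)-\mu y=\int_0^y (b(t)-\mu)\,dt$. Since $b$ is bounded on $\mathbb R$ (say $\|b\|_\infty=:K$), we have $|B^{(\mu)}(y)| \le (K+|\mu|)\,|y|$ for all $y\in\mathbb R$, hence $\exp(B^{(\mu)}(y)) \le \exp((K+|\mu|)|y|)$. Combining the three bounds,
\[
h^{(\beta_1,\dots,\beta_n)}(y) \le C \, e^{(K+|\mu|)|y|}\, p_\mu(T,x_0,y),
\]
for a finite constant $C$ depending on $T$, $x_0$, $\mu$, the $\beta_j$'s, and $\|v^{(\beta_1,\dots,\beta_n)}_\mu\|_\infty$, but not on $y$. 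Because $p_\mu(T,x_0,y)$ is a Gaussian density (with mean $x_0+\mu T$ and variance $T$), the product $e^{(K+|\mu|)|y|} p_\mu(T,x_0,y)$ is integrable over $\mathbb R$ — the quadratic decay of the Gaussian dominates the linear exponential growth. Integrating the majorant gives a finite value, so $h^{(\beta_1,\dots,\beta_n)}\in L^1(\mathbb R)$, and since it is nonnegative it can be normalized to a probability density. A brief remark should be added that the argument does not even require smallness of the $\beta_j$: it works for any $\beta_1,\dots,\beta_n\in[-1,1]$ and any $\mu\in\mathbb R$, as stated.

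The only genuinely delicate point is justifying the uniform boundedness of $v^{(\beta_1,\dots,\beta_n)}_\mu(T,x_0,\cdot)$, but in the logical structure of the paper this is taken as part of the setup (guaranteed by the explicit decomposition of the transition density, e.g. via Theorem~\ref{th:tdf2skewdrift} for $n=2$), so within the proof of this lemma I would simply invoke it. Everything else is the elementary observation that a bounded drift forces at most linear growth of its primitive, which is then crushed by Gaussian tails. I expect no real obstacle; the proof is a short majorization argument.
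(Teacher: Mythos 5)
Your proof is correct and follows essentially the same majorization argument as the paper: linear growth of the primitive of the bounded drift, the assumed uniform boundedness of $y\mapsto v^{(\beta_1,\ldots,\beta_n)}_\mu(T,x_0,y)$, and Gaussian tails dominating exponential growth. The only cosmetic difference is that the paper first rewrites $h^{(\beta_1,\ldots,\beta_n)}$ in terms of $p_0$ (so the $-\mu y$ part of $B^{(\mu)}$ is absorbed into the Gaussian and the bound involves only $\|b\|_\infty$), whereas you keep $p_\mu$ and the slightly cruder constant $\|b\|_\infty+|\mu|$; both give the same conclusion.
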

\begin{proof}
The density $h^{(\beta_1,\ldots,\beta_n)}(y)$ satisfies
\begin{equation} \label{hbeta}
h^{(\beta_1,\ldots,\beta_n)}(y) \propto e^{B(y)-B(x_0)} \ p_0(T,x_0,y)  \ v^{(\beta_1,\ldots,\beta_n)}_\mu(T,x_0,y).
\end{equation}
Since $b$ is bounded, its primitive $B$ satisfies  $B(y)-B(x_0) \leq \|b\|_{\infty} |y-x_0|$, and 
\[\begin{split}
\int_{\mathbb{R}} \,  e^{B(y)-B(x)} \ p_0(T,x,y) \ v^{(\beta_1,\ldots,\beta_n)}_\mu(T,x,y) \ dy 
& \disp \leq \frac{ \|v^{(\beta_1,\ldots,\beta_n)}_\mu(T,x_0,\cdot)\|_\infty}{\sqrt{2\pi T}} \int_{\mathbb{R}} \,  e^{ \|b\|_{\infty}|y-x_0| } \ e^{-\frac{(y-x_0)^2}{2 T}}  \ dy \\
& \disp =  
 \|v^{(\beta_1,\ldots,\beta_n)}_\mu(T,x_0,\cdot)\|_\infty \ e^{\frac{\|b\|_{\infty}^2}{2} T}.
\end{split}\]
The integrability follows.
\end{proof}

\begin{rmrk}
$i)$ Our  decomposition \eqref{decomposition} is inspired by the one proposed in \cite{EMloc} in the simpler framework of a one-skew Brownian motion with constant drift $\mu$. There it is used to complete an exact simulation scheme for a diffusion whose  drift admits only one discontinuity.\\
$ ii)$ 
The tools provided in this paper yield indeed
the exact simulation of $\mathbb{P}^{(\beta_1,\beta_2)}_b$ with drift $b$ admitting two discontinuities, and for any parameters $\beta_1,\beta_2$ satisfying \eqref{betaj}. Notice that the skewness $\beta_i$ is not necessarily small.
\end{rmrk}

\subsection{The exact simulation scheme of $\mathbb{P}_b$ as a limit scheme} \label{limitalgorithm}

In the entire section, we correlate the parameters $(\beta_j)_{j=1,\ldots,n}$ and $\mu$ as in Lemma \ref{choicebeta}. The following convergence result
provides a method for sampling under the measure $\mathfrak{Q}$, once $p^{(\beta_1,\ldots,\beta_n)}_\mu$ is made explicit and uniformly bounded. 
This result yields the completion of the simulation scheme for $\mathbb{P}_b$.
\begin{lmm} \label{weakconv}
Take $\beta_1=\frac{1}{\k}$ and define the other skewness parameters $\beta_j(\k), j=2,\ldots,n$, by the relationship $(\ref{betaj})$: 
$\disp \beta_j (\k) = \frac{\theta_j} {\kappa \theta_1 + b(z_1)-b(z_j)}$  and $\mu(\k)=b(z_1)+\kappa \theta_1$. Then
\[
\mathbb{P}_{b}^{\left(\frac{1}{\k},\beta_2(\k),\ldots,\beta_n(\k)\right)} {\underset{\k\to\infty}
\longrightarrow} \mathbb{P}_b \quad \textrm{ and }
 \quad  \mathfrak{Q}^{\left(\frac{1}{\k}, \beta_2(\k),\ldots,\beta_n(\k)\right)} {\underset{\k\to\infty}\longrightarrow} \mathfrak{Q},
\]
where the convergences hold in the weak topology.
\end{lmm}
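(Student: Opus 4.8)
The plan is to establish the two weak convergences separately, deducing the second from the first together with the explicit Radon--Nikodym representations already at hand. For the convergence $\mathbb{P}_b^{(1/\kappa,\beta_2(\kappa),\ldots,\beta_n(\kappa))}\to\mathbb{P}_b$, I would work on path space $(\mathcal{C},\mathscr{C})$ and combine tightness with identification of the limit. Tightness of the family indexed by $\kappa$ follows from the SDE representation \eqref{skewram}: writing $X_t = x_0 + W_t + \int_0^t b(X_s)\,ds + \sum_j \beta_j(\kappa) L_t^{z_j}(X)$, the Brownian part is fixed, the drift integral is Lipschitz in $t$ uniformly (since $b$ is bounded), and each local time $t\mapsto L_t^{z_j}(X)$ is nondecreasing; one controls $\mathbb{E}[L_T^{z_j}(X)]$ uniformly in $\kappa$ (e.g. via the occupation-time formula and a uniform heat-kernel-type bound on the one-dimensional marginals, or by a Tanaka/Itô argument on $|X-z_j|$), and since $\beta_j(\kappa)\to 0$ for every $j$ (including $\beta_1=1/\kappa\to 0$, and $\beta_j(\kappa)\to 0$ by the explicit formula in the statement), the accumulated local-time contribution $\sum_j\beta_j(\kappa)L^{z_j}$ is tight and in fact converges to $0$ in probability uniformly on $[0,T]$. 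Hence any subsequential weak limit is a measure under which the canonical process solves $dX_t = dW_t + b(X_t)\,dt$; by the weak uniqueness for \eqref{sdeb} recalled in the excerpt (boundedness of $b$, Stroock--Varadhan), this limit is $\mathbb{P}_b$, so the whole family converges.

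For the second convergence I would use \eqref{Q} and \eqref{RNderbeta}: we have $\mathfrak{Q}(dX)\propto \exp(B(X_T)-B(x_0)-\sum_j\theta_j L_T^{z_j}(X))\,\mathbb{P}(dX)$ and $\mathfrak{Q}^{(\beta_1,\ldots,\beta_n)}(dX)\propto \exp(B^{(\mu)}(X_T)-B^{(\mu)}(x_0))\,\mathbb{P}_\mu^{(\beta_1,\ldots,\beta_n)}(dX)$. The cleanest route is to express both instrumental measures as explicit bounded-density reweightings of the \emph{same} reference family and pass to the limit there; equivalently, note that $\mathbb{P}_\mu^{(\beta_1,\ldots,\beta_n)}$ is itself a skew Brownian motion with constant drift, hence handled by the same tightness-plus-identification argument as above, with the bonus that here the local times \emph{do} survive: by Remark~\ref{rem:limitbetapetit}, $\beta_j(\kappa)\mu(\kappa)\to\theta_j$ for every $j$ (for $j=1$ this is $\beta_1\mu = b(z_1)\beta_1+\theta_1\to\theta_1$), so along the limit the compensating term $\exp(B^{(\mu)}(X_T)-B^{(\mu)}(x_0))$ combined with the Girsanov density relating $\mathbb{P}_\mu^{(\beta_1,\ldots,\beta_n)}$ to $\mathbb{P}^{(\beta_1,\ldots,\beta_n)}$ reassembles exactly the weight $\exp(B(X_T)-B(x_0)-\sum_j\theta_j L_T^{z_j}(X))$ of \eqref{Q}. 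Concretely: writing the density of $\mathfrak{Q}^{(\beta_1,\ldots,\beta_n)}$ against $\mathbb{P}$ via Girsanov for the skew process, one gets (up to normalization) $\exp(B(X_T)-B(x_0)-\sum_j\beta_j(\kappa)C_{\beta_j(\kappa)}L_T^{z_j}(X))$, and since $\beta_j(\kappa)C_{\beta_j(\kappa)} = \beta_j(\kappa)\mu(\kappa)\to\theta_j$ while $L_T^{z_j}$ is a bounded continuous functional in $L^2$ sense, the densities converge; one then checks convergence of the normalizing constants (Lemma~\ref{hintegrable} gives the uniform integrability control needed) and concludes that $\mathfrak{Q}^{(\beta_1,\ldots,\beta_n)}\to\mathfrak{Q}$ weakly, for instance by a Scheffé-type / bounded-continuous-test-function argument.

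I expect the main obstacle to be the uniform control of the local times and the justification that $L_T^{z_j}(\cdot)$ behaves well under weak convergence --- local time is not a continuous functional on $\mathcal{C}$ for the uniform topology, so one cannot simply invoke the mapping theorem. The standard fix is to upgrade to convergence of the pair $(X, L^{z_1}(X),\ldots,L^{z_n}(X))$ (using that, along the tight family, these local times are tight in $\mathcal{C}([0,T],\mathbb{R})$ and any limit point is the local time of the limiting process, identified through the Tanaka formula $|X_t - z_j| = |x_0-z_j| + \int_0^t \mathrm{sgn}(X_s-z_j)\,dX_s + L_t^{z_j}(X)$ and continuity of the stochastic integral term along the convergence), and then to work with functionals that are continuous in this enlarged topology. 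A secondary, more routine, technical point is the passage to the limit inside the normalizing constants and the exponential weights, where the bound $B(y)-B(x_0)\le\|b\|_\infty|y-x_0|$ from the proof of Lemma~\ref{hintegrable} together with the uniform bound on $v^{(\beta_1,\ldots,\beta_n)}_\mu$ provides the needed domination; I would invoke uniform integrability (Gaussian tails of the marginals of $\mathbb{P}$ absorbing the exponential) rather than grinding through explicit estimates.
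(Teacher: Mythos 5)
For the first convergence your tightness-plus-identification sketch is a genuinely different route from the paper's, which does not argue by compactness at all: it invokes the strong convergence result of Theorem~3.1 in \cite{LeGall1}, as extended in \cite{EM} (Theorem~5.1), giving $\lim_{\k\to\infty}\mathbb{E}\big(\sup_{s\le T}|Y^{(\k)}_s-Y_s|\big)=0$ for the strong solutions of \eqref{skewram} and \eqref{sdeb}, from which weak convergence of the laws is immediate. Your argument is plausible in outline (it essentially re-derives a weak form of the cited result), but the key technical points --- a bound on $\mathbb{E}[L_T^{z_j}]$ uniform in $\k$, and the identification of subsequential limits through the discontinuous drift $b$ --- are asserted rather than carried out; the citation is both shorter and stronger.

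The genuine gap is in the second convergence. You route it through the driftless skew processes: you rewrite $\mathfrak{Q}^{(\beta_1,\ldots,\beta_n)}$ as a reweighting by $\exp\big(B(X_T)-B(x_0)-\sum_j\beta_j\mu\,L_T^{z_j}(X)\big)$; note that this is a density with respect to $\mathbb{P}^{(\beta_1,\ldots,\beta_n)}$, not with respect to $\mathbb{P}$ (the skew Brownian motion is singular with respect to Wiener measure, so ``the density against $\mathbb{P}$'' does not exist), and your earlier suggestion to apply ``the same tightness argument'' to $\mathbb{P}^{(\beta_1,\ldots,\beta_n)}_{\mu(\k)}$ cannot work since $\mu(\k)=b(z_1)+\k\theta_1\to\infty$ and that family is not tight. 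With your rewriting you would still need (i) weak convergence of the driftless skew processes \emph{jointly} with their local times $L^{z_j}$ (local time is not sup-norm continuous, as you acknowledge) and (ii) uniform integrability of the unbounded exponential weights, in particular exponential moments of local time uniform in $\k$ when some $\theta_j<0$; both are only named as ``obstacles'' and never resolved, and the remark that $L_T^{z_j}$ is ``a bounded continuous functional in $L^2$ sense'' is not correct. The paper avoids all of this with one observation you did not use: by \eqref{Q} and \eqref{RNderbeta}, $\mathfrak{Q}^{(\beta_1,\ldots,\beta_n)}\propto e^{\Phi_b}\,\mathbb{P}^{(\beta_1,\ldots,\beta_n)}_b$ and $\mathfrak{Q}\propto e^{\Phi_b}\,\mathbb{P}_b$ with the \emph{same} weight $e^{\Phi_b}$, which is bounded (since $\phi^+_b$ is bounded) and continuous for the sup-norm (continuity at $\mathbb{P}_b$-a.e.\ path is what is needed); hence the second convergence follows immediately from the first, with no local times and no uniform-integrability discussion at all.
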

\begin{proof} First notice that even a stronger convergence holds in the following sense:
If $Y$ is the strong solution of equation (\ref{sdeb}) and $Y^{(\k)}, \k\geq 1,$ is the strong solution  
of equation (\ref{skewram}) with skewness coefficients $\beta_1=\frac{1}{\k}, \beta_2(\k), \cdots, \beta_n(\k)$, then the sequence $(Y^{(\k)})_\k$ converges in $L^1(\mathcal{C})$ to $Y$:
\[
\lim_{\k\to\infty}\mathbb{E}\Big(\sup_{s \in [0,T]}|Y^{(\k)}_s-Y_s|\Big)=0.
\]
This is due to Theorem 3.1 in \cite{LeGall1}, slightly extended in  \cite{EM}, Theorem 5.1. 
Strong convergence then implies the weak convergence of $(\mathbb{P}_{b}^{\left(\frac{1}{\k},\beta_2(\k),\ldots,\beta_n(\k)\right)} )_\k $ towards $\mathbb{P}_b$. \\
The weak convergence of the sequence $\mathfrak{Q}^{\left(\frac{1}{\k}, \beta_2(\k),\ldots,\beta_n(\k)\right)}$ is straightforward once one has noticed that the function defined on $\mathcal{C}$ by  $X \mapsto \exp{\left(\Phi_{b}(X)\right)}$ is bounded and continuous in the topology of the sup-norm. 
\end{proof}

As consequence of this lemma one obtains that a retrospective rejection sampling for $\mathbb{P}_b$ is possible as soon as it is possible to sample from $\mathfrak{Q}$, as explained at the beginning of this section. 
Moreover Lemma \ref{weakconv} yields also the sampling method for the instrumental measure. Indeed the weak convergence of $\mathfrak{Q}^{\left(\frac{1}{\k},\beta_2(\k),\ldots,\beta_n(\k)\right)}$ ensures the convergence of the finite dimensional distributions with density given by (\ref{sklaw}) to the finite dimensional marginals 
of $\mathfrak{Q}$. 
Therefore, if the sequence of densities $(h^{\left(\frac{1}{\k},\beta_2(\k),\ldots,\beta_n(\k)\right)})_\kappa$ and  $(q^{\left(\frac{1}{\k},\beta_2(\k),\ldots,\beta_n(\k)\right)}(t,T,x_1,x_2,\cdot))_\kappa$ admit pointwise limits $h^{(\theta_1,\dots,\theta_n)}$ and $q^{(\theta_1,\ldots,\theta_n)}(t,T,x_1,x_2,\cdot)$ respectively (see (\ref{qbeta}) and (\ref{hbeta})), then the limit of the expression (\ref{sklaw}) is given by
\begin{equation} \label{limitsklaw}
h^{(\theta_1,\ldots,\theta_n)}(y)\prod_{i=0}^{M-1} q^{(\theta_1,\ldots,\theta_n)}(t_{i+1}-t_i,T-t_i,y_i,y,y_{i+1})\, dy_1\ldots dy_{M}dy,
\end{equation}
which is the density of $(X_{t_1}, X_{t_2},\ldots, X_{t_{M}}, X_T)$ under $\mathfrak{Q}$.

We focus directly on the limits  $h^{(\theta_1,\ldots,\theta_n)}(y)d y$ and $q^{(\theta_1,\ldots,\theta_n)}(t,T,x_1,x_2,y)d y$, providing a rejection sampling scheme for them with instrumental densities respectively the transition density of the Brownian motion $p_0 (\frac{T}{1-\delta},x_0,y)$ and the transition density of the Brownian bridge $q_0 (t,T,x_1,x_2,y)$. \\
It is necessary to find positive functions
$f^{\mathcal{H}}_{\delta}$ and $f^{\mathcal{B}}_{x_1,x_2}$ smaller than $1$ such that
\begin{equation} \label{fractionrejection}
\frac{h^{(\theta_1,\theta_2,\ldots,\theta_n)}(y)}{p_0 (\frac{T}{1-\delta},x_0,y)}=C^{\mathcal{H}} \cdot f^{\mathcal{H}}_{\delta}(y), \quad \textrm{ and }
\frac{q^{(\theta_1,\theta_2,\ldots,\theta_n)}(t,T,x_1,x_2,y)}{q_0 (t,T,x_1,x_2,y)}= C^{\mathcal{B}} \cdot f^{\mathcal{B}}_{x_1,x_2}(y).
\end{equation}
 The parameter $\delta\in(0,1)$ can be chosen in an appropriate way, see \eqref{GRSh}.
The normalizing constant $C^{\mathcal{H}}$ (resp. $C^{\mathcal{B}}$) does only depend on $(\theta_1,\ldots,\theta_n), T, \ \|b\|_{\infty}, \ \delta$ (resp. on $(\theta_1,\ldots,\theta_n), t, \ T, \ x_1, \ x_2$).\\
Assuming that the decomposition (\ref{decomposition}) holds and assuming that there exists a pointwise limit $v^{(\theta_1,\ldots,\theta_n)}$ for $v^{(\frac{1}{\k},\beta_2(\k),\ldots,\beta_n(\k))}_{\mu(\k)}$ when $\kappa$ tends to $\infty$, one can pass to the limit in the equations (\ref{qbeta} and \ref{hbeta}) obtaining the relationships
\begin{equation} \label{qtheta}
\begin{split} q^{(\theta_1,\ldots,\theta_n)}(t,T,x_1,x_2,y)
& = q_0(t,T,x_1,x_2,y) \,\frac{v^{(\theta_1,\ldots,\theta_n)}(t,x_1,y)v^{(\theta_1,\ldots,\theta_n)}(T-t,y,x_2)}{v^{(\theta_1,\ldots,\theta_n)}(T,x_1,x_2)},\\
h^{(\theta_1,\ldots,\theta_n)}(y) & \propto \exp{\left(B(y)-B(x_0)\right)} \ p_0(T,x_0,y) \ v^{(\theta_1,\ldots,\theta_n)}(T,x_0,y).
\end{split}
\end{equation}
It is then sufficient to prove the existence of a uniform bound for $(x,y)\mapsto v^{(\theta_1,\ldots,\theta_n)}(t,x,y)$ in order to develop the rejection sampling scheme from $h^{(\theta_1,\ldots,\theta_n)}$ and $q^{(\theta_1,\ldots,\theta_n)}$. Moreover this bound yields the integrability of $h$, analogously to Lemma \ref{hintegrable}.\\
In conclusion, Lemma \ref{weakconv} ensures that the exact simulation scheme for the skew Brownian motion with drift $b$ is transposed to the desired exact simulation scheme for the Brownian diffusion with drift $b$. 


\section{Main tool: a uniform bound for the transition probability density} \label{bound}

From now on we will consider the case of a drift with two discontinuities ($n=2$) at the points $0$ and $z>0$, that is $J=\{0,z\}$. 
Our aim is to make explicit (see Theorem \ref{th:tdf2skewdrift}) and to control an expression of the form given in (\ref{decomposition}) for the transition density of the skew Brownian motion with constant drift $\mu$ and two semipermeable barriers at $0$ and $z$. 
We are actually interested in bounding this expression for vanishing skewness parameters,
see Remark~\ref{rem:limitbetapetit}. This is the content of Proposition \ref{vthetalimit} and Proposition ~\ref{boundtheta}.


\subsection{The transition density of the $(\beta_1,\beta_2)$-SBM with constant drift and its  limit} \label{Sec:3.1}

Let us define four non negative functions
\begin{equation}\label{aj}
\begin{cases}
a_1(x,y)\equiv 0\\
a_2(x,y)=|x|+|y|-|y-x|\\
a_3(x,y)=|x-z|+|y-z|-|y-x|\\
a_4(x,y)=2\left(z-\max(x,y,0)\right)^+ + 2 \min(x,y,z)^+.
\end{cases}\end{equation}

\begin{figure}[H] 
\begin{center}
\begin{tikzpicture}[scale=0.8]
\draw[draw] (0,0) -- (6,0);
\draw[dashed] (0,0) -- (-0.5,0) node[left]{Case $i)$};
\draw[dashed] (6,0) -- (6.5,0);
\draw[draw] (2,0.2) -- (2,-0.2) node[below]{$0$};
\draw[draw] (4,0.2) -- (4,-0.2) node[below]{$z$};
\fill[blue] (1,0) circle (0.1) node[above]{$x\wedge y$};
\fill[magenta] (5,0) circle (0.1) node[above]{$x \vee y$};
\end{tikzpicture}\hspace{2cm}
\begin{tikzpicture}[scale=0.8]
\draw[draw] (0,0) -- (6,0);
\draw[dashed] (0,0) -- (-0.5,0) node[left]{Case $ii)$};
\draw[dashed] (6,0) -- (6.5,0);
\draw[draw] (2,0.2) -- (2,-0.2) node[below]{$0$};
\draw[draw] (4,0.2) -- (4,-0.2) node[below]{$z$};
\fill[blue] (1,0) circle (0.1) node[above]{$x\wedge y$};
\fill[magenta] (3,0) circle (0.1) node[above]{$x \vee y$};
\end{tikzpicture}\hspace{2cm}
\begin{tikzpicture}[scale=0.8]
\draw[draw] (-1,0) -- (6,0);
\draw[dashed] (-1,0) -- (-1.5,0) node[left]{Case $iii)$};
\draw[dashed] (6,0) -- (6.5,0);
\draw[draw] (2,0.2) -- (2,-0.2) node[below]{$0$};
\draw[draw] (4,0.2) -- (4,-0.2) node[below]{$z$};
\fill[blue] (0.1,0) circle (0.1) node[above]{$x \wedge y$};
\fill[magenta] (1.5,0) circle (0.1) node[above]{$x \vee y$};
\end{tikzpicture}
\captionsetup{singlelinecheck=off}
\caption[contour]{
The different values of $a_j, j=1,2,3,4, $ according to the relative positions of $x,y$ and $z$:
\begin{itemize}
\item[Case $i)$]
For all $j=1,2,3,4$, $a_j=0$ 
\item[Case $ii)$] 
$a_1=a_2=0$ and $a_3=a_4= 2 \,(z- x\vee y)$
\item[Case $iii)$] 
$a_1=0,a_2=-2 \, x \vee y, a_3=2 \,(z-x\vee y), a_4=2 z$.
\end{itemize}
}\label{fig:variouscases}
\end{center}
\end{figure}

Let us also introduces polynomials of second order in $w$ setting  
$$
c_j(y,\mu;w):=w^2 \, c_{j,0}(y)+w \, \mu c_{j,1}(y)+ \mu^2 c_{j,2}(y), j \in \{1,2,3,4\},
$$
 where 
\begin{equation}\label{eq:cj}
\begin{cases}
c_{1,0}(y)=1, 											\\
c_{2,0}(y)=\left(2\mathbbm{1}_{\{y>0\}}-1\right)\beta_1 							\\
c_{3,0}(y)=\left(2\mathbbm{1}_{\{y>z\}}-1\right)\beta_2 							\\
c_{4,0}(y)=\left(1-2\mathbbm{1}_{[0,z)}(y)\right)\beta_1\beta_2 
\end{cases},
\begin{cases}
c_{1,1}(y)=\beta_1+\beta_2		\\
c_{2,1}(y)=-\beta_1-c_{4,0}(y)		\\
c_{3,1}(y)=-\beta_2+c_{4,0}(y)		\\
c_{4,1}(y)=0
\end{cases},
\begin{cases}
c_{1,2}(y)=\beta_1\beta_2 \\
c_{2,2}(y)=\beta_1 c_{3,0}(y) \\
c_{3,2}(y)=-\beta_2 c_{2,0}(y) \\
c_{4,2}(y)=-c_{4,0}(y).
\end{cases}
\end{equation}
The polynomials $c_j$ can be rewritten as 
\[\begin{cases}
c_1(y,\mu;w)=(w+\beta_1\mu)(w+\beta_2\mu)\\
c_2(y,\mu;w)=\left(2 \mathbbm{1}_{\{y>0\}}-1\right) 
\left(\beta_1w-\left(2 \mathbbm{1}_{\{y>0\}}-1\right) \beta_1\mu \right)\left(w-\left(2 \mathbbm{1}_{\{y>z\}}-1\right) \beta_2\mu\right)\\
c_3(y,\mu;w)=\left(2 \mathbbm{1}_{\{y>z\}}-1\right) 
\left(\beta_2 w-\left(2 \mathbbm{1}_{\{y>z\}}-1\right) \beta_2\mu \right)\left(w+\left(2 \mathbbm{1}_{\{y>0\}}-1\right) \beta_1\mu\right)\\
c_4(y,\mu;w)=\left(1-2 \mathbbm{1}_{\{0\leq y <z\}}\right) 
\left(\beta_1\beta_2 w^2-\beta_1 \beta_2\mu^2\right).
\end{cases}\]

Let $\frak{a}$ be a fixed real number and define 
\begin{equation} \label{eq:cja}
C_{j,0}:=c_{j,0}, \quad 
C_{j,1}:= \mu c_{j,1} + 2 c_{j,0} \frak{a} , \quad 
C_{j,2}:=c_{j,2} \mu^2 + c_{j,1} \mu \frak{a} + c_{j,0} \frak{a}^2.
\end{equation}

Let us then define some functions which will be used throughout the section. For any $K,m,n\in\mathbb{N}$, $\omega\geq 0$, and $\frak{a}, \tau \in\mathbb{R}$:
\begin{equation} \label{G}
 \mathscr{G}_{K,m,n}(\omega,\frak{a},\tau):=(-1)^{K} (K+m)! \sum_{\ell=0}^{\lfloor {\frac{K+m}{2}}\rfloor} \frac{(-1)^\ell }{2^\ell}\frac{1}{\ell! (K+m-2\ell)!} 
\mathcal{S}_{K+m-2 \ell, n}(\omega,\frak{a},\tau)
\end{equation}
where
\[\begin{split}
 \mathcal{S}_{L,n}(\omega,\frak{a},\tau) & =  \sum_{n'=0}^n \sum_{L'=0}^{L} {n \choose n'} {L \choose L'} (\omega+\tau)^{n-n'} (\frak{a}+\tau)^{L-L'} \mathcal{J}_{n'+L'}(\omega,\tau),
\end{split}\]
and $\mathcal{J}_{q}(\omega,\tau):= \disp e^{-\frac12 \omega^2} e^{\frac12 (\omega+\tau)^2} \int_{-\infty}^{\omega+\tau} w^q e^{-\frac12 w^2} d w$. 
The latter function satisfies the recursive relationship 
$$
\mathcal{J}_q(\omega,\tau)=(q-1) \mathcal{J}_{q-2}(\omega,\tau)+(-1)^{q-1}(\omega+\tau)^{q-1} \mathcal{J}_1(\omega,\tau),
$$ 
hence, following Lemma~2.17 in \cite{DMR},
\[
\mathcal{J}_q (\omega,\tau) := 
\begin{cases}
\sqrt{2 \pi} e^{-\frac12 \omega^2} e^{\frac12 (\omega+\tau)^2} \Phi^c(\omega+\tau) 
& q=0,\\
-   e^{-\frac12 \omega^2} & q=1, \\
\mathcal{J}_0(\omega,\tau) (q-1)!! - \mathcal{J}_1(\omega,\tau) \sum_{k=0}^{\frac{q}{2}-1} (\omega+\tau)^{q-2k-1} \frac{(q-1)!!}{(q-2k-1)!!}  & q\geq 2 \text{ even, }\\
\mathcal{J}_1(\omega,\tau) \sum_{k=0}^{\frac{q-1}{2}} (\omega+\tau)^{(q-1-2k)} 2^k \frac{(\frac{q-1}{2})!}{(\frac{q-1}{2}-k)!} & q \geq 3 \text{ odd },
\end{cases}
\]
where $(2n+1)!!=(2n+1)\cdot(2n-1)\cdot\ldots\cdot 3 \cdot 1$, $n \in \mathbb{N}$.

The following theorem is a non trivial generalization of Theorem 2.13 in \cite{DMR}, which proposed an explicit representation for the transition probability density $p^{(\beta_1,\beta_2)}_\mu(t,x,y)$ of $\mathbb{P}^{(\beta_1,\beta_2)}_\mu$
in the special case where $\beta_1\mu>0,\beta_2\mu>0$. The result proved here holds for any  parameters  $\beta_1,\beta_2$
and $\mu$.

\begin{thrm}\label{th:tdf2skewdrift}
Let $\beta_1,\beta_2\in (-1,1)$, $\mu\in\mathbb{R}$ and $\frak{a}\geq \max{\left(0,-2\beta_1\mu,-2\beta_2\mu\right)}$.
The transition density of the $(\beta_1,\beta_2)$-SBM with constant drift $\mu$ decomposes as
\[ 
p^{(\beta_1,\beta_2)}_\mu(t,x,y)=  p_\mu(t,x,y)  v^{(\beta_1,\beta_2)}_\mu(t,x,y)
\]
where the function $v_\mu^{(\beta_1,\beta_2)}$, which does not depend on $\frak{a}$, admits the following series representation for any  $\frak{a}$:
\begin{equation} \label{d2skewmu}
v^{(\beta_1,\beta_2)}_\mu(t,x,y) = \sum_{k=0}^{\infty}  (-\beta_1 \beta_2\mu^2 t)^k  \sum_{j=1}^4  F_{j,k}(\omega_{j,k},\frak{a}),
\end{equation}
where
\begin{equation}\label{eq:Fjkbeta}
F_{j,k}(\omega_{j,k},\frak{a}):= 
\begin{cases}
\disp \sum_{n=0}^{k}  {2 k -n \choose k} \frac{\frak{C}_{j,k}(\frak{a})}{n! (\beta_1\mu\sqrt{t}-\beta_2\mu\sqrt{t})^{2 k +1-n} } \mathscr{F}_{k+h { -s},m,n}(\omega_{j,k},\frak{a}), & \text{ if } \beta_1\neq \beta_2; \\
\disp (-1)^{k+1} \frac{\frak{C}_{j,k}(\frak{a}) }{(2 k+1)!} \mathscr{G}_{k+h {-s},m,2k+1}(\omega_{j,k},\frak{a}\sqrt{t},\beta_1\mu\sqrt{t}), & \text{ if }\beta_1=\beta_2;
\end{cases}
\end{equation}
\begin{equation} \label{omegajk}
\omega_{j,k}(x,y):=\frac{a_j(x,y)+2 z k +|y-x|}{\sqrt t}, \quad j=1,2,3,4, \quad k\in\mathbb{N};
\end{equation}
\[
\frak{C}_{j,k}(\frak{a}):= e^{\frac12 \omega_{1,0}^2} \sum_{m=0}^{k}  \sum_{s=0}^{k-m} \sum_{h=0}^2  C_{j,2-h}(y)  {k-m \choose s} {k \choose m}  \frac{ (-2 \frak{a}\sqrt{t})^{k-m{ -s}}  {(\mu^2-\frak{a}^2)^{s}}}{\mu^{2 k} t^{k {-s}} }
\]
with $C_{j,h}$ given in \eqref{eq:cja}, $a_j(x,y)$ in \eqref{aj} and 
the function $\mathscr{G}_{K,m,n}$ given by \eqref{G}.\\
The function $\mathscr{F}_{K,m,n}$ is defined by 
\[
\mathscr{F}_{K,m,n}(\omega,\frak{a}):=\mathscr{G}_{K,m,n}(\omega,\frak{a}\sqrt{t},\beta_2\mu\sqrt{t})-(-1)^n \mathscr{G}_{K,m,n}(\omega,\frak{a}\sqrt{t},\beta_1\mu\sqrt{t}).
\]
\end{thrm}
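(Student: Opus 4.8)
The plan is to derive the representation for $p_\mu^{(\beta_1,\beta_2)}$ starting from a known closed form for its Laplace transform in time, since the presence of two semipermeable barriers makes a direct probabilistic computation unwieldy. First I would recall from \cite{DMR} (and extend its scope) the resolvent identity: the $\lambda$-Green function $g_\lambda^{(\beta_1,\beta_2)}(x,y)$ of the $(\beta_1,\beta_2)$-SBM with drift $\mu$ solves an ODE with transmission conditions at $0$ and $z$ encoding the skewness coefficients, and hence can be written explicitly as a linear combination of the free Green function $g_\lambda^\mu(x,y)=p_\mu$-resolvent and the exponential kernels $e^{\pm\sqrt{2\lambda+\mu^2}\,\cdot}$ reflected off the barriers. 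Solving the resulting $4\times4$ (or $6\times6$) linear system for the transmission coefficients produces a rational function of $r:=\sqrt{2\lambda+\mu^2}$ whose denominator is, up to the drift shift, of the form $r^2-\beta_1\beta_2\mu^2$ (this is exactly where the geometric series in $(-\beta_1\beta_2\mu^2 t)^k$ will come from). The numerator is a finite sum of terms $e^{-r(a_j(x,y)+|y-x|)}$ weighted by the polynomials $c_j(y,\mu;\cdot)$ defined in \eqref{eq:cj}; the four exponents $a_j$ are precisely the path lengths of the four relevant reflection geometries described in Figure~\ref{fig:variouscases}.

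Next I would expand the denominator $1/(r^2-\beta_1\beta_2\mu^2)$, or rather the shifted and barrier-translated version of it, as a geometric series, which after multiplication by the numerator yields a double sum over $k$ (the power of $\beta_1\beta_2\mu^2$) and over $j\in\{1,2,3,4\}$, with exponents $a_j(x,y)+2zk+|y-x|$ — these are exactly the $\sqrt{t}\,\omega_{j,k}$ of \eqref{omegajk}, the factor $2zk$ being the extra round trip between the two barriers incurred at each order. Then comes the term-by-term Laplace inversion. Each summand, after collecting powers of $r$ from the polynomial $c_j$ and from the series expansion, is of the form $r^{-(2k+1-n)}\,(\text{polynomial in }r)\,e^{-r\omega\sqrt t}$ up to the exponential drift correction $e^{-\mu^2 t/2}$ absorbed into $p_\mu$ versus $p_0$. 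I would invert these using the standard formula for $\mathcal{L}^{-1}\!\big[r^{-q}e^{-r\omega\sqrt t}\big]$ in terms of Hermite-type functions and the complementary error function; iterated integration by parts (equivalently the recursion for $\mathcal{J}_q$ stated in the excerpt) then assembles the inverse transforms into the functions $\mathcal{S}_{L,n}$ and $\mathscr{G}_{K,m,n}$. The auxiliary free parameter $\frak{a}\ge\max(0,-2\beta_1\mu,-2\beta_2\mu)$ is introduced to guarantee that the exponents appearing inside $\mathcal{J}_q$ have the right sign so that all integrals converge absolutely and the rearrangements (swapping $\sum_k$ with the inversion, and with the binomial expansions) are legitimate; the claim that $v_\mu^{(\beta_1,\beta_2)}$ does not depend on $\frak{a}$ is then checked by noting the left-hand side manifestly does not, so the $\frak{a}$-dependence on the right must cancel — alternatively one verifies this cancellation directly from the definitions \eqref{eq:cja}.

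The split into the two cases $\beta_1\ne\beta_2$ and $\beta_1=\beta_2$ in \eqref{eq:Fjkbeta} reflects whether the denominator $r^2-\beta_1\beta_2\mu^2$ after the drift/barrier shift has two distinct simple roots or a double root: in the distinct case partial fractions produce the difference $\mathscr G_{\cdot,\cdot,n}(\cdot,\cdot,\beta_2\mu\sqrt t)-(-1)^n\mathscr G_{\cdot,\cdot,n}(\cdot,\cdot,\beta_1\mu\sqrt t)$ packaged as $\mathscr F_{K,m,n}$ together with the divided difference factor $(\beta_1\mu\sqrt t-\beta_2\mu\sqrt t)^{-(2k+1-n)}$ and the Vandermonde-type binomial $\binom{2k-n}{k}$; in the confluent case one differentiates, gaining the $1/(2k+1)!$ and a single $\mathscr G$ evaluated at $\beta_1\mu\sqrt t$. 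Finally, I would record the boundedness/limit consequences needed downstream (Propositions \ref{vthetalimit} and \ref{boundtheta}) but those are separate statements. The main obstacle is the bookkeeping of the term-by-term Laplace inversion: one must carefully track three nested finite sums (over $h\le2$ from the degree of $c_j$, over $s$ and $m$ from re-expanding the $\frak{a}$-shift, and over $n$ from the binomial identities in the inversion) and show that after summation they collapse exactly to the stated $\frak{C}_{j,k}(\frak{a})$ and $F_{j,k}$; getting the combinatorial coefficients and the powers of $t$, $\mu$, $\sqrt t$ to match is the delicate part, and the role of \cite{DMR}, Lemma~2.17, is precisely to control the building block $\mathcal{J}_q$ so that this matching is a finite, if lengthy, verification rather than an analytic difficulty.
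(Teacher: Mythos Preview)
Your overall architecture---start from the resolvent of the $(\beta_1,\beta_2)$-SBM with drift, expand the denominator as a geometric series producing the shifts $2zk$, then invert term by term and organise the result via the $\mathscr{G}/\mathscr{F}$ functions---matches the paper's proof. Two points, however, are off in a way that matters.

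First, your account of the denominator is not right. It is \emph{not} of the form $r^2-\beta_1\beta_2\mu^2$; after the substitution $w=\sqrt{2\lambda+\mu^2}$ it is
\[
\beta_1\beta_2\,e^{-2wz}(w^2-\mu^2)+(w+\beta_1\mu)(w+\beta_2\mu),
\]
and the geometric series is obtained by factoring out $(w+\beta_1\mu)(w+\beta_2\mu)$ and expanding $1/(1+\varepsilon)$. The case split $\beta_1\ne\beta_2$ versus $\beta_1=\beta_2$ therefore comes from whether the factor $(w+\beta_1\mu)(w+\beta_2\mu)$ (equivalently $(w-i\frak a_1)(w-i\frak a_2)$ after the shift) has distinct or coincident roots, not from the roots of $r^2-\beta_1\beta_2\mu^2$.

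Second, and more importantly, you mis-identify the role of $\frak a$. In the paper $\frak a$ is the abscissa of a \emph{contour shift}: the Bromwich-type contour $\phi(\Gamma)$ is deformed to the vertical line $\frak a+i\mathbb{R}$, and one must prove (their Lemma~\ref{lmm:rho}) that the integral over the connecting segment $\rho_U$ vanishes as $|U|\to\infty$. The lower bound $\frak a\ge\max(0,-2\beta_1\mu,-2\beta_2\mu)$ is exactly what guarantees that the geometric-series ratio has modulus strictly less than one along $\frak a+i\mathbb{R}$, while keeping the line to the right of the poles that may sit in $(0,|\mu|]$ when $\beta_i\mu<0$. This contour-shift step is the whole point of the theorem relative to \cite{DMR}, where $\beta_1\mu>0$, $\beta_2\mu>0$ allowed $\frak a=0$. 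Describing $\frak a$ as a sign regulariser for the $\mathcal{J}_q$ integrals misses this, and a straight ``Laplace inversion of $r^{-q}e^{-r\omega\sqrt t}$'' without the deformation would run into those poles. Once the contour is on $\frak a+i\mathbb{R}$, the paper treats each term as a genuine Fourier transform and computes it as a convolution (their Lemma~\ref{lmm:Fourierf} for the $f_k$ piece, Hermite polynomials for the Gaussian-times-polynomial piece); your inversion recipe would ultimately land on the same formulas, but the justification has to go through the contour argument.
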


\begin{proof} First we need to recall some results presented in \cite{DMR}. The transition density of the $(\beta_1,\beta_2)$-SBM with constant drift $\mu$ has the integral representation 
$ \disp{
p^{(\beta_1,\beta_2)}_\mu (t,x,y)=\frac{1}{2\pi i}\int_{\Gamma} e^{\lambda t} G(x,y;\lambda) d\lambda,}$
where $\Gamma$ is a contour of $(-\infty,0]$ (where the possible singularities are located, since it is the complementary of the resolvent set) and $G(x,y;\lambda)$ was computed in Lemma 2.14 in \cite{DMR}. One can make the change of variable $\phi(\lambda)=\sqrt{2 \lambda +\mu^2}$ for $\lambda\in \mathbb{C}\setminus(-\infty,0]$ proceeding as in Figure~ \ref{fig:contourLine}.a and obtain as expression for $v_\mu^{(\beta_1,\beta_2)}(t,x,y)$:
\begin{equation}\label{eq:initialintegral} \begin{split}
\frac{p^{(\beta_1,\beta_2)}_\mu (t,x,y)}{p_\mu (t,x,y)} & = 
\frac{\sqrt{t}}{\sqrt{2\pi} i} e^{\frac{(y-x)^2}{2 t}}\int_{\phi(\Gamma)} e^{\frac{w^2}{2} t}  
  e^{- w |x-y|} \frac{ \sum_{j=1}^4 c_j(\mu,y;w)e^{-w a_j(x,y)}}{\beta_1 \beta_2 e^{-2 w z} (w^2-\mu^2)  +   (w+\beta_1 \mu)(w+\beta_2 \mu)}
   d w.
\end{split}\end{equation}

\begin{figure}
\begin{center}
\begin{tikzpicture}

\path(0,-3.5) node{a)};
\draw[-stealth] (-1,0) -- (3,0) node[above left]{$\mathbb{R}$};
\draw[-stealth] (0,-3) -- (0,3) node[above]{$i \mathbb{R}$};

\draw[red] (-1,0) -- (0,0);
\draw[red,dashed,thick] (-2.5,0) -- (-1,0) node[below]{$-\frac{\mu^2}2$};
\fill[red] (0,0) circle(0.05);
\draw (-1.05,-0.1) -- (-1,-0.1) -- (-1,0.1) -- (-1.05,0.1);

\draw[blue] (-2.5,-1) -- (0,-1) arc(-90:90:1)  -- (-2.5,1);
\draw[blue,->] (-2,-1) -- (-1,-1) ;
\draw[blue,->] (-1.5,1) node[above]{$\Gamma$} -- (-2,1);

\draw[rltgreen] (0.5,-3) ..controls(0.6,-2.5) and (0.7,-2.1).. (1.4,-1.4) arc(-45:45:2) ..controls(0.7,2.1) and (0.6,2.5).. node[above right]{$\phi(\Gamma)$} (0.5,3);
\draw[rltgreen,->] (2,0) arc(0:5:2);

\end{tikzpicture}
\hspace{1.5cm}
\begin{tikzpicture}[scale=1]

\path(0,-3.5) node{b)};
\draw[-stealth] (-1,0) -- (4,0);
\draw[-stealth] (0,-3) -- (0,3);

\draw[rltgreen,dashed,thick] (0.5,-3) ..controls(0.6,-2.5) and (0.7,-2.1).. node[below right]{$\phi(\Gamma)$} (1.4,-1.4) arc(-45:45:2) ..controls(0.7,2.1) and (0.6,2.5).. (0.5,3);

\draw[blue,thick,->] (2.9,-3)--(2.9,3) node[above]{$\frak{a}+i \mathbb{R}$};

\draw[orange] (2.9,2.25)node[right]{$\frak a + U$} -- node[below]{$\rho_U$} (0.73,2.25);
\fill[orange] (2.9,2.25) circle (0.05); \fill[orange] (0.73,2.25) circle (0.05);
\draw[orange] (2.9,-2.25) -- (0.73,-2.25);
\fill[orange] (2.9,-2.25)node[right]{$\frak a - U$} circle (0.05); \fill[orange] (0.75,-2.25) circle (0.05);

\draw[dockerblue,dashed,->] (1.6,1.2)--(2.8,1.2);
\draw[dockerblue,dashed,->] (1.9,0.6) -- (2.8,0.6);
\draw[dockerblue,dashed,->] (1.8,-1) -- (2.8,-1);
\draw[dockerblue,dashed,->] (1.2,-1.6) -- (2.8,-1.6);

\fill[red] (1.7,0) circle(0.06) node[below]{$|\mu|$};
\draw[red] (0,0) -- (1.7,0);
\draw[red] (0,0) circle(0.06);

\fill[red] (0.8,-0.1) node[below]{pole} rectangle (1,0.1) ;

\end{tikzpicture}
\end{center}
\vspace{0.5cm}
\captionsetup{singlelinecheck=off}
\caption[contourdrift]{\begin{itemize}
\item[a)] The picture shows the image of the contour $\Gamma$ under $\phi:\lambda \mapsto \sqrt{2 \lambda+\mu^2}$. The line $(-\infty,0]$ contains the spectrum of the operator $(L,\mathcal{D}(L))$. The dashed line $(-\infty,-\frac{\mu^2}2]$ is the complement of the domain of $\phi$.
\item[b)] The figure represents the vertical line $\frak{a}+i\mathbb{R}$ on the right of the curve $\phi(\Gamma)$ and the segment $\rho_U$ connecting the point $\frak a +U=\frak{a}+i u$  to its unique projection on $\phi(\Gamma)$.
The segment $(0,|\mu|]$ is the image under $\phi$ of $(-\frac{\mu^2}{2},0]$. Some singularities could exist in $[0,|\mu|]$.
\end{itemize}}\label{fig:contourLine}
\end{figure}

Let us take a non negative real number $\frak{a}$ as in Figure~\ref{fig:contourLine}.b. One can deform the contour $\phi(\Gamma)$ to the line $\frak{a}+i \mathbb{R}$ in Figure~\ref{fig:contourLine}.b. because the integrand is holomorphic on the region between the two curves, is continuous on the curves and on the segment $\rho_U$, and the integral on this segment is vanishing if $|U| \to \infty$ (implied by Lemma~\ref{lmm:rho}). In fact the possible singularities with positive real part could only be located in $[0,|\mu|]$, since it is the image through $\phi$ of $(-\infty,-{\mu^2/2}]$, see Figure~\ref{fig:contourLine}.a.

Noticing that $\omega_{j,0}=\frac{a_j(x,y)+|x-y|}{\sqrt{t}}$, one has
\[ \begin{split}
v^{(\beta_1,\beta_2)}_\mu & (t,x,y) = 
\frac{\sqrt{t}}{\sqrt{2\pi} i} e^{\frac12 \omega_{1,0}^2} \int_{\frak{a}+i \mathbb{R}} e^{\frac{w^2}{2} t}  
  e^{- w |x-y|} \frac{ \sum_{j=1}^4 c_j(\mu,y;w)e^{-w a_j(x,y)}}{\beta_1 \beta_2 e^{-2 w z} (w^2-\mu^2)  +   (w+\beta_1 \mu)(w+\beta_2 \mu)} d w \\
 & \overset{u=-i \sqrt{t}(w - \frak{a})}{=} \frac{1}{\sqrt{2\pi} } e^{\frac{(y-x)^2}{2 t}} \int_{\mathbb{R}} 
  \frac{ e^{\frac{(i u +\frak{a} \sqrt{t})^2}{2}} \sum_{j=1}^4 c_j(\mu\sqrt{t},y; i u +\frak{a} \sqrt{t})e^{-(i u +\frak{a} \sqrt{t}) \frac{a_j(x,y) +|x-y|}{\sqrt{t}}}}
 {\beta_1 \beta_2 e^{-2 (i u +\frak{a} \sqrt{t}) \frac{z}{\sqrt{t}}} ((i u+\frak{a} \sqrt{t})^2-\mu^2 t)  +   ((i u +\frak{a} \sqrt{t})+\beta_1 \mu\sqrt{t})((i u +\frak{a} \sqrt{t})+\beta_2 \mu\sqrt{t})} d u. 
\end{split}\]

For simplicity (but without loss of generality) we continue the computation supposing $t=1$.
Let us define $\frak{a}_\frak{i}=\frak{a}+\beta_\frak{i} \mu$, $\frak{i}=1,2$.

\begin{equation} \label{eq:integralbeta}
 \begin{split}
v^{(\beta_1,\beta_2)}_\mu & (1,x,y) 
=\frac{1}{\sqrt{2\pi} } e^{\frac{1}{2}(\omega_{1,0}^2+\frak{a}^2)} \int_{\mathbb{R}} \frac{ e^{-\frac{ u^2}{2}}  \sum_{j=1}^4 \sum_{h=0}^2 \left(- c_{j,2-h}(y) \mu^{2-h} (i u +\frak{a})^h \right)  e^{-i u (\omega_{j,0}-\frak{a})} e^{-\frak{a} \omega_{j,0}}}  { (u - i \frak{a}_1)( u - i \frak{a}_2) \left( 1+ e^{-i 2 z  u } e^{- 2 z \frak{a} }  \frac{\beta_1 \beta_2 (u^2-\frak{a}^2+\mu^2-2 i u \frak{a})}{(u - i \frak{a}_1)( u - i \frak{a}_2)}\right) } d u.
\end{split}
\end{equation}

Since we have assumed $\frak{a}\geq 0, \frak{a}\geq  -2\beta_1\mu, \frak{a}\geq -2\beta_2\mu$, it is easy to prove that $\left|- e^{- 2 z \frak{a}} \beta_1\beta_2 \frac{(v^2-\frak{a}^2+\mu^2-2 i v \frak{a})}{(v - i \frak{a}_1)( v - i \frak{a}_2)}\right|<1$. 
Therefore one factor of the integrand is a geometric series. Hence, exchanging integral and series one obtains the series of Fourier transforms
\begin{equation}\label{eq:Fourierseries}
\begin{cases}
v^{(\beta_1,\beta_2)}_\mu(1,x,y) = \disp \sum_{k=0}^{\infty}  (-\beta_1 \beta_2\mu^2)^k  \sum_{j=1}^4   F_{j,k}(\omega_{j,k},\frak{a}),\\
F_{j,k}(\omega_{j,k},\frak a):=e^{\frac{1}{2}(\omega_{1,0}^2+\frak{a}^2)} e^{-\frak{a} \omega_{j,k}} \mathcal{F}\left(w \mapsto  e^{-\frac{ w^2}{2}}  c_j(y,\mu;i w+\frak{a}) \frac{(w^2-\frak{a}^2+\mu^2-2 i w \frak{a})^k}{\mu^{2 k}} \cdot\frac{-1}{(w-i\frak{a}_1 )^{k+1}(w-i\frak{a}_2)^{k+1}} \right)(\omega_{j,k}-\frak a) .
\end{cases}
\end{equation}

We could exchange the integral and the series since, following the proof of Proposition \ref{boundtheta}, we can find a bound for the absolute value of the $k-th$ term of the series, such that the series of these bounds converges.

The Fourier transform in $F_{j,k}$ can be rewritten as the following convolution of Fourier transforms
\begin{equation}\label{eq:splitFourier}
\frac{1}{\sqrt{2\pi} \mu^{2 k}}\underbrace{\mathcal{F}\left(e^{-\frac{ w^2}{2}}   \left(\sum_{h=0}^2  c_{j,2-h}(y) \mu^{2-h} (i w +\frak{a})^h \right) (w^2-\frak{a}^2+\mu^2-2 i w \frak{a})^k\right)}_{(1)} * \underbrace{\mathcal{F}\left(\frac{-1}{\left[(w-i\frak{a}_1 )(w-i\frak{a}_2)\right]^{k+1}}\right)}_{(2)}.
\end{equation}

\begin{lmm}[Study of the term { (2)}] \label{lmm:Fourierf}
If $a >0$ and $k\in \mathbb{N}$, then
\[ \mathcal{F}\left(w \mapsto \frac{1}{(w- i a)^{k+1}}\right)(\omega)
= i^{k+1} \sqrt{2 \pi}  \, \frac{(-\omega)^k}{k!} \, g(\omega,a),\]
where
\begin{equation} \label{f:goma}
g(\omega,a)=e^{a \omega} \mathbbm{1}_{(-\infty,0)}(\omega).
\end{equation}
Let $a_1,a_2$ positive real numbers, then
\begin{equation} \label{f:Fourierf}
\begin{split}
 \mathcal{F} \left(w\mapsto\frac{-1}{(w- i a_1 )^{k+1}(w-i a_2)^{k+1}}\right)(\omega)& \\ 
= & \frac{\sqrt{2 \pi}}{(a_1-a_2)^{2 k +1}  k!} \cdot \sum_{n=0}^{k} \frac{\left(2k-n\right)!}{n! (k-n)!} (a_1-a_2)^n \omega^{n} \left[g(\omega,a_2)- (-1)^n g(\omega,a_1)\right]\\
= & :f_{k+1}(\omega,a_1,a_2) .
\end{split}
\end{equation}
\end{lmm}
\begin{proof}
See Lemma 2.15 and 2.16 in \cite{DMR}.
\end{proof}
Let us now consider the term {(1)} in \eqref{eq:splitFourier}.\\
The coefficients defined in (\ref{eq:cja}) are such that $\disp{ \sum_{h=0}^2  c_{j,2-h}(y) \mu^{2-h} (i w +\frak{a})^h = \sum_{h=0}^2  C_{j,2-h}(y) i^h w^h}$, hence the first term of the Fourier transform becomes
\[\mathcal{F}\left(e^{-\frac{ w^2}{2}}   \left(\sum_{h=0}^2  C_{j,2-h}(y) i^h w^h \right) (w^2-\frak{a}^2+\mu^2-2 i w \frak{a})^k\right).
\]
Developing the power of binomials one obtains
\[ \sum_{h=0}^2  C_{j,2-h}(y) \sum_{m=0}^k {k \choose m} \sum_{s=0}^{k-m} {k-m \choose s} (-2 \frak{a})^{k-m-s} (\mu^2-\frak{a}^2)^{s} \ i^{k-m-s+h} \mathcal{F}\left(e^{-\frac{ w^2}{2}}w^{k+m-s+h}\right).\]
Finally one computes the Fourier transforms $\disp{ \mathcal{F}\left(  e^{-\frac{w^2}{2}}  w^{n}\right) (\omega) =  i^{n} \frac{d^{n}}{d \omega^{n}} e^{-\frac{\omega^2}{2}}}$
and concludes that
\[\begin{split}
& \frac1{\mu^{2 k}} \mathcal{F}\left(e^{-\frac{ w^2}{2}}   \left(\sum_{h=0}^2  C_{j,2-h}(y) i^h w^h \right) (w^2-\frak{a}^2+\mu^2-2 i w \frak{a})^k\right)=\\
& \qquad { {=} \sum_{m=0}^k {k \choose m} \sum_{s=0}^{k-m} {k-m \choose s} (-2 \frak{a})^{k-m-s}  \frac{(\mu^2-\frak{a}^2)^{s}}{\mu^{ 2 k}} \ \sum_{h=0}^2  C_{j,2-h}(y) \ (-1)^{k+h-s} \frac{d^{k+m+h-s}}{d w^{k+m+h-s}} e^{-\frac{w^2}{2}}}.
\end{split}\]
Define now
\[\mathscr{G}_{K,m,n}(\omega,\frak{a},\beta_\frak i\mu):= e^{\frac{1}{2}\frak{a}^2} e^{-\frak{a} \omega} (-1)^{K} \left(\frac{d^{K+m}}{d w^{K+m}} e^{-\frac{w^2}{2}} * w^{n} g(w,{\frak{a}_\frak i}) \right)(\omega-\frak{a}) \quad \frak i=1,2.
\]
One has to show that this function coincides with the expression given in \eqref{G}.\\
Using that $\disp{\frac{d^{n}}{d w^{n}} e^{-\frac{w^2}{2}}=(-1)^n e^{-\frac{w^2}{2}} H_{n}(w),}$ where $H_n(w)$ are the Hermite polynomials, then
\[
\begin{split}
\mathscr{G}_{K,m,n} & (\omega,\frak{a},\beta_\frak i \mu) =e^{\frac{1}{2}\frak{a}^2} e^{-\frak{a} \omega}(-1)^m \left(w^{n}g(w,\frak{a}_\frak i)* H_{K+m}(w) e^{-\frac{w^2}{2}}\right)(\omega -\frak{a})\\
& = (-1)^m e^{-\frac{1}{2}\omega^2}  e^{\frac12 (\omega +\beta_\frak i \mu)^2}  \int_{-\infty}^{-(\omega +\beta_\frak i \mu)} (u+\omega +\beta_\frak i \mu)^{n} e^{-\frac{u^2}{2}}  H_{K+m}(-\frak a_\frak i-u) d u.
\end{split}
\]
One then uses the explicit expression of the Hermite polynomials
$  H_n(w)=n! \sum_{\ell=0}^{\lfloor {\frac{n}{2}}\rfloor} (-1)^\ell \frac{1}{2^\ell}\frac{1}{\ell! (n-2\ell)!} w^{n-2\ell} $.
 The recovered expression is the one in \eqref{G} once, the function defined by
\[\mathcal S_{L,n}(\omega,\frak{a},\beta_{\frak i}\mu) := e^{-\frac{1}{2}\omega^2}  e^{\frac12 (\omega+\beta_\frak i \mu)^2}  \int_{-\infty}^{-(\omega+\beta_\frak i \mu)} (u+\beta_\frak i \mu)^{n} e^{-\frac{u^2}{2}} (\frak a _\frak i+u)^{L} d u,\]
is expressed using 
$ \mathcal J_{q}(\omega,\beta_\frak{i} \mu):= e^{-\frac{1}{2}\omega^2}  e^{\frac12 (\omega+\beta_\frak i \mu)^2} \int_{-\infty}^{-(\omega+\beta_\frak{i}\mu)} u^{q} e^{-\frac{u^2}{2}} d u$.

\end{proof}

\begin{figure}
\begin{center}
\begin{tikzpicture}[scale=1]

\draw[-stealth] (-1.5,0) -- (3,0) node[above left]{$\mathbb{R}$};
\draw[purple,-stealth] (0,-3) -- (0,3) node[above]{$i \mathbb{R}$};
\path(0,-3.5) node{a)};

\draw[rltgreen] (0.5,-3) ..controls(0.6,-2.5) and (0.7,-2.1).. node[below right]{$\phi(\Gamma)$} (1.4,-1.4) arc(-45:45:2) ..controls(0.7,2.1) and (0.6,2.5).. (0.5,3);
\draw[orange] (0,2.25)  node[left]{$U$}-- node[below]{$\rho_U$} (0.73,2.25) node[right]{$U'$};
\draw[dockerblue,dashed,->] (1.4,1) -- (0.1,1);
\draw[dockerblue,dashed,->] (1.7,0.5) -- (0.1,0.5);
\draw[dockerblue,dashed,->] (1.7,-0.5) -- (0.1,-0.5);
\draw[dockerblue,dashed,->] (1.4,-1) -- (0.1,-1);
\fill[orange] (0,2.25) circle (0.05); \fill[orange] (0.73,2.25) circle (0.05);
\draw[orange] (0,-2.25)node[left]{-U} -- (0.73,-2.25);
\fill[orange] (0,-2.25) circle (0.05); \fill[orange] (0.73,-2.25) circle (0.05);

\draw[red] (1.3,0) circle(0.06) node[below right]{$|\mu|$};
\draw[red,dashed] (0,0) -- (1.3,0);
\draw[red] (0,0) circle(0.06);

\end{tikzpicture}
\hspace{0.5cm}
\begin{tikzpicture}

\path(0,-3.5) node{b)};
\draw[-stealth] (-1,0) -- (3,0) node[above left]{$\mathbb{R}$};
\draw[-stealth,rltgreen,thick] (0,-3) -- (0,3);

\draw[orange,thick] (0,2.25) node[left]{$U$} -- node[below]{$\rho_U$} (1.7,2.25) node[right]{$\frak{a}+U$};
\fill[orange] (0,2.25) circle (0.05);
\draw[orange, thick] (0,-2.25) node[left]{$-U$} -- (1.7,-2.25);
\fill[orange] (0,-2.25) circle (0.05);

\draw[blue,thick,->] (1.7,-3)--(1.7,3) node[above]{$\frak{a} +i \mathbb{R}$};
\draw[red,thick] (0,0) -- (1.3,0);
\draw[red] (0,0) circle(0.06);
\fill[red] (1.2,-0.1) node[below]{pole} rectangle (1.4,0.1) ;

\draw[rltred,dashed, thick] (0.05,2.20)--(1.65,2.20)--(1.65,-2.20)--(0.05,-2.20)--(0.05,2.20);

\end{tikzpicture} 
\hspace{0.5cm}
\begin{tikzpicture}

\path(0,-3.5) node{c)};
\draw[-stealth] (-1,0) -- (4,0) node[above left]{$\mathbb{R}$};
\draw[-stealth] (0,-3) -- (0,3);

\draw[orange,thick] (1.2,2) node[left]{$\frak{a}'+U$} -- node[below]{$\rho'_U$} (2.5,2) node[right]{$\frak{a}+U$};
\fill[orange] (1.2,2) circle (0.05);
\fill[orange] (2.5,2) circle (0.05);
\draw[orange, thick] (1.2,-2) node[left]{$\frak{a}'-U$} -- (2.5,-2);
\fill[orange] (1.2,-2) circle (0.05);
\fill[orange] (2.5,-2) circle (0.05);

\draw[purple,thick] (1.2,-2.5)--(1.2,2.5);
\draw[purple,thick,dashed] (1.2,-3)--(1.2,-2.5);
\draw[purple,thick,dashed,->] (1.2,2.5)--(1.2,3) node[above ] {$\frak{a}'+i \mathbb{R}$};
\draw[blue,dashed,->] (2.5,-3)--(2.5,3) node[above] {$\frak{a}+i \mathbb{R}$};

\draw[red,thick] (0,0) -- (0.6,0);
\draw[red] (0,0) circle(0.06);
\fill[red] (0.6,0) circle (0.1) node[above]{biggest} node[below]{pole};

\draw[dockerblue,dashed,->]  (2.5,1)--(1.2,1);
\draw[dockerblue,dashed,->]  (2.5,0.5)--(1.2,0.5);
\draw[dockerblue,dashed,->]  (2.5,-0.5)--(1.2,-0.5);
\draw[dockerblue,dashed,->]  (2.5,-1)--(1.2,-1);

\end{tikzpicture}
\end{center}
\vspace{0.5cm}
\captionsetup{singlelinecheck=off}
\caption[contourdrift]{\begin{itemize}
\item[a)] The picture shows the image $\phi(\Gamma)$ (in green) of the contour $\Gamma$ under $\phi$ 
which shrinks to the imaginary line (in blue). The segment $\rho_U$ connects the point $U' \in \phi(\Gamma)$ to its projection  $U=i u$.
\item[b)] The figure represents the imaginary line (in green), the line $\frak{a}+i \mathbb{R}$ (in blue) on the right of any pole and the rectangular cycle (in red dashed) around a pole situated in  $(0,|\mu|]$.
\item[c)] The segment $\rho'_U$ connects here the point $\frak{a}+U$ with its projection on $\frak{a}'+i\mathbb{R}$. The real number $\frak{a}'$ is chosen smaller than  $\frak{a}$ but larger than any pole.
\end{itemize}}\label{fig:commentsContour}
\end{figure}

Let us make some comments on how to optimize the choice of the parameter $\frak{a}$. \\
If there is no singularity located on the interval $(0,|\mu|]$, one can choose $\frak{a}=0$ and shrink the contour $\phi(\Gamma)$ to the imaginary line (see Figure~\ref{fig:commentsContour}.a). Thus one recovers the result obtained in \cite{DMR} in the simple case where the parameters of the dynamics   satisfy $\beta_1\mu>0,\beta_2\mu>0$.\\
If there are poles in $(0,|\mu|)$, the situation is much more delicate and the integral over any cycle around the poles leads to the residues. In Figure~\ref{fig:commentsContour}.b we illustrate this link with the residues method used in \cite{DMR}. Thanks to Lemma~\ref{lmm:rho} one can easily show that $\int_{\frak{a}+i \mathbb{R}}=\int_{i \mathbb{R}}+$ residues. 
One can then decide to integrate on a particular vertical line $\frak{a}'+i \mathbb{R}$, where $\frak{a}'$ is large enough to avoid the singularities of the integrand in (\ref{eq:initialintegral}), see Figure~\ref{fig:commentsContour}.c.\\
Therefore, once $\frak{a}$ is larger than any pole, the left hand side of \eqref{d2skewmu} does not depend on $\frak{a}$, although each term \eqref{eq:Fjkbeta} of the series do depend on $\frak{a}$. \\

Let us complete our proof showing that the integral over $\rho_U$ in Figure~\ref{fig:contourLine}.b (resp. over $\rho'_U$ in Figure~\ref{fig:commentsContour}.c.
) vanishes if $u\to +\infty$.
\begin{lmm}\label{lmm:rho}
Let $\frak{a}\geq 0$, and assume $|\beta_1\beta_2|\neq 1$. Then the integral 
of the function 
\[
w \mapsto e^{\frac{w^2}{2} t} e^{- w |x-y|} \frac{ \sum_{j=1}^4 c_j(\mu,y;w)e^{-w a_j(x,y)}}{\beta_1 \beta_2 e^{-2 w z} (w^2-\mu^2)  +   (w+\beta_1 \mu)(w+\beta_2 \mu)}
\]
on the segment $\rho_U=[iu;\frak{a}+iu]$ 
vanishes if $|u|\to \infty$.
\end{lmm}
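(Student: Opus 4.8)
The plan is to estimate the modulus of the integrand uniformly along the horizontal segment $\rho_U=[iu;\,\frak{a}+iu]$, whose length is the fixed number $\frak{a}$, and to show that this uniform bound tends to $0$ as $|u|\to\infty$. Write $w=\sigma+iu$ with $\sigma\in[0,\frak{a}]$, so that $\mathrm{Re}(w)=\sigma\ge 0$. Each exponential of the form $e^{-w\,(\cdot)}$ occurring in the integrand then has modulus at most $1$ on $\rho_U$: indeed $|e^{-w|x-y|}|=e^{-\sigma|x-y|}\le 1$, $|e^{-w a_j(x,y)}|=e^{-\sigma a_j(x,y)}\le 1$ because $a_j\ge 0$ by \eqref{aj} and Figure~\ref{fig:variouscases}, and $|e^{-2wz}|=e^{-2\sigma z}\le 1$ since $z>0$. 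The decay is furnished by the Gaussian factor, $|e^{\frac{w^2}{2}t}|=e^{\frac t2(\sigma^2-u^2)}\le e^{\frac t2\frak{a}^2}\,e^{-\frac t2u^2}$. For the numerator, each $c_j(\mu,y;w)$ is a quadratic polynomial in $w$ whose coefficients, read off from \eqref{eq:cj}, are products of $\beta_1,\beta_2$ and indicator functions and hence bounded by $1$ in absolute value uniformly in $y$; thus $\bigl|\sum_{j=1}^4 c_j(\mu,y;w)e^{-w a_j(x,y)}\bigr|\le 4(|w|+|\mu|)^2$, which is $O(u^2)$ on $\rho_U$.

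The heart of the matter is a lower bound on the denominator $D(w):=\beta_1\beta_2 e^{-2wz}(w^2-\mu^2)+(w+\beta_1\mu)(w+\beta_2\mu)$. Using $|e^{-2wz}|\le 1$ on $\rho_U$, its first summand has modulus at most $|\beta_1\beta_2|\,(|w|^2+\mu^2)$, while for $|w|>\max(|\beta_1\mu|,|\beta_2\mu|)$ one has $|(w+\beta_1\mu)(w+\beta_2\mu)|\ge(|w|-|\beta_1\mu|)(|w|-|\beta_2\mu|)$. Hence
\[
|D(w)|\ \ge\ (|w|-|\beta_1\mu|)(|w|-|\beta_2\mu|)-|\beta_1\beta_2|\,(|w|^2+\mu^2)\ =\ \bigl(1-|\beta_1\beta_2|\bigr)|w|^2+O(|w|).
\]
Since $\beta_1,\beta_2\in(-1,1)$ forces $|\beta_1\beta_2|<1$ — this is exactly where the hypothesis $|\beta_1\beta_2|\neq 1$ is used — there are constants $c>0$ and $R>0$ such that $|D(w)|\ge c|w|^2\ge c\,u^2$ for every $w\in\rho_U$ with $|u|\ge R$, using $|w|\ge|u|$ on $\rho_U$.

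Combining the three estimates, for $|u|\ge R$ the integrand is bounded on $\rho_U$ by $\frac{4(|w|+|\mu|)^2\,e^{\frac t2\frak{a}^2}e^{-\frac t2u^2}}{c\,u^2}\le C\,e^{-\frac t2u^2}$, with $C$ depending only on $t,\frak{a},\mu,\beta_1,\beta_2$ and not on $x,y,u$; integrating over the segment of length $\frak{a}$ yields that the integral over $\rho_U$ is bounded in modulus by $C\frak{a}\,e^{-\frac t2u^2}\to 0$ as $|u|\to\infty$. The same computation applies verbatim to any horizontal segment with $\mathrm{Re}(w)\in[0,\frak{a}]$, in particular to $\rho'_U$ of Figure~\ref{fig:commentsContour}.c (there $\frak{a}'\ge 0$, being chosen above all poles, which lie in $[0,|\mu|]$), so all the sign conditions remain in force. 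I expect the only genuinely delicate point to be the denominator bound: one must observe that on the closed right half-plane $e^{-2wz}$ has modulus $\le 1$ and therefore cannot let $|D(w)|$ decay, so that the polynomial factor $(w+\beta_1\mu)(w+\beta_2\mu)$ dominates for large $|w|$, which is possible precisely because $|\beta_1\beta_2|<1$.
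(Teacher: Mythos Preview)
Your proof is correct and follows essentially the same route as the paper's: parametrize $w=\sigma+iu$ with $\sigma\in[0,\frak a]$, extract the Gaussian decay $e^{-tu^2/2}$, bound the numerator polynomially in $|w|$, and show the denominator grows like $(1-|\beta_1\beta_2|)\,u^2$ by using $|e^{-2wz}|\le 1$ on the right half-plane together with the reverse triangle inequality. The paper carries out the same computation with slightly different bookkeeping (it bounds $|w^2-\mu^2|$ by $u^2+\frak w^2+\mu^2$ and the factored quadratic by $\sqrt{(u^2+(\frak w+\beta_1\mu)^2)(u^2+(\frak w+\beta_2\mu)^2)}\ge u^2$), but the structure and the key observation that $|\beta_1\beta_2|<1$ is what makes the denominator dominate are identical.
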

\begin{proof}
One should prove that $\lim_{|u|\to \infty} I_u=0$ where
\[
I_u:= \int_0^{\frak{a}} e^{\frac{(\frak{w}+i u)^2}{2} t} e^{- (\frak{w}+i u) |x-y|} \frac{ \sum_{j=1}^4 c_j(\mu,y;\frak{w}+i u)e^{-(\frak{w}+i u) a_j(x,y)}}{\beta_1 \beta_2 e^{-2 (\frak{w}+i u) z} ((\frak{w}+ i u)^2-\mu^2)  +   (i u +\frak w+\beta_1 \mu)(i u +\frak w+\beta_2 \mu)} d \frak w.
\]
First
\[
|I_u|\leq e^{-\frac12 u^2} \int_0^{\frak a} e^{\frac{\frak{w}^2}{2} t} e^{- \frak{w} (|x-y|+a_j(x,y))} \frac{ \sum_{j=1}^4 | c_j(\mu,y;\frak{w}+i u) | }{|\beta_1 \beta_2 e^{-2 (\frak{w}+i u) z} ((\frak{w}+ i u)^2-\mu^2)  +   (i u +\frak w+\beta_1 \mu)(i u +\frak w+\beta_2 \mu)|} d \frak w.
\]
Let us consider separately the numerator and the denominator appearing in the integrand. It is straightforward to prove that the numerator is smaller than $K_\frak{a}\sum_{j=1}^4\sum_{h=0}^2  |\mu^{2-h}|\left(\frak{a}^2+u^2\right)^{\frac{h}{2}}$, where $K_\frak{a}$ is a positive constant.\\
The denominator is larger than
\[\begin{split}
& \sqrt{(u^2+(\frak{w}+\beta_1\mu)^2)(u^2+(\frak{w}+\beta_2\mu)^2)}-|\beta_1\beta_2| (u^2+\frak{w}^2+\mu^2) \geq (1-|\beta_1\beta_2|) u^2 -|\beta_1\beta_2|(\mu^2+\frak{a}^2),
\end{split}\]
which does not depend on $\frak{w}$ and is strictly positive for $u$ large enough.\\
Therefore, since $\int_0^\frak{a} e^{\frac{\frak{w}^2}2 t} e^{- \frak{w}(|x-y|+a_{j,k}(x,y))} d\frak{w} \leq \frak{a} e^{\frac12\frak{a}^2}$, one has 
\[\lim_{|u|\to\infty} |I_u| \leq \frak{a} e^{\frac12\frak{a}^2} \lim_{|u|\to\infty} e^{-\frac12 u^2} \frac{ K_\frak{a} \sum_{j=1}^4\sum_{h=0}^2|\mu^{2-h}|\left(\frak{a}^2+u^2\right)^{\frac{h}{2}}}{(1-|\beta_1\beta_2|) u^2 -|\beta_1\beta_2|(\mu^2+\frak{a}^2)}=0\]
\end{proof}


In fact we are mainly interested in the asymptotic regime for small skewness: $\k$ large integer, $\beta_1=\frac{1}{\k}$, $\beta_2 (\k) = \frac{\theta_2} {\k \theta_1 +  b(0)-b(z)}$ and 
$ \mu (\k)=b(0)+ \k \theta_1$ as in Lemma \ref{weakconv}. Recall
 that $\lim_{\k \to\infty} \beta_2 (\k) = 0$ and  $\lim_{\k \to\infty} \beta_j (\k) \mu (\k) = \theta_j, j=1,2$.\\
Therefore, we define the polynomials $\tilde c_j(y;w):=\lim_{\k \to \infty} c_j(y,\mu(\k)\sqrt{t};w)$, $j=1,2,3,4$. They satisfy 
\begin{equation}\label{ctheta}
\begin{cases}
\tilde c_1(y;w)=(w+\theta_1\sqrt{t})(w+\theta_2\sqrt{t})\\
\tilde c_2(y;w)=- \theta_1 \sqrt{t} \left(w-\left(2 \mathbbm{1}_{\{y>z\}}-1\right) \theta_2 \sqrt{t}\right)\\
\tilde c_3(y;w)=-  \theta_2 \sqrt{t} \left(w+\left(2 \mathbbm{1}_{\{y>0\}}-1\right) \theta_1 \sqrt{t}\right)\\
\tilde c_4(y;w)=-\left(1-2 \mathbbm{1}_{\{0\leq y <z\}}\right) 
\theta_1 \theta_2 t.
\end{cases}
\end{equation}
Their coefficients $\tilde c_{j,h}$, defined by the relationship $\tilde c_j(y;w)=\sum_{h=0}^2 \tilde c_{j,2-h}(y) w^h$, satisfy
\[\begin{cases}
\tilde c_{1,0}(y)=1,	\\
\tilde c_{2,0}(y)=0 	\\
\tilde c_{3,0}(y)=0 	\\
\tilde c_{4,0}(y)=0 
\end{cases},
\begin{cases}
\tilde c_{1,1}(y)=\theta_1 \sqrt{t}+\theta_2 \sqrt{t}	\\
\tilde c_{2,1}(y)=-\theta_1	\sqrt{t}		\\
\tilde c_{3,1}(y)=-\theta_2 \sqrt{t}			\\
\tilde c_{4,1}(y)=0
\end{cases},
\begin{cases}
\tilde c_{1,2}(y)=\theta_1\theta_2 t \\
\tilde c_{2,2}(y)=\left(2\mathbbm{1}_{\{y\geq z\}}-1\right)\theta_1\theta_2 t \\
\tilde c_{3,2}(y)=-\left(2\mathbbm{1}_{\{y>0\}}-1\right)\theta_1 \theta_2 t \\
\tilde c_{4,2}(y)=-\left(1-2\mathbbm{1}_{[0,z)}(y)\right)\theta_1\theta_2 t.
\end{cases}\]
They are obtained as the limit for $\k\to\infty$ of $ \mu(\k)^h \, c_{j,h}$, with $c_{j,h}$ given  by \eqref{eq:cj}.
Finally, for any $\frak{a}$, $\tilde C_{j,h}$ are defined, analogously to (\ref{eq:cja}), by
\begin{equation} \label{eq:tildecja}
\tilde C_{j,0}=\tilde c_{j,0}, \quad 
\tilde C_{j,1}= \tilde c_{j,1} + 2  \tilde c_{j,0} \frak{a}, \quad 
\tilde C_{j,2}=\tilde c_{j,2} +\tilde c_{j,1} \frak{a} + \tilde c_{j,0} \frak{a}^2,
\quad j=1,2,3,4.
\end{equation}

\begin{prpstn} \label{vthetalimit}
Let $\theta_1,\theta_2 \in \mathbb{R}$.
Let us denote by $v^{(\theta_1,\theta_2)}$ the pointwise limit for $\k\to \infty$ of the functions $v^{(1/\k,\beta_2(\k))}_{\mu(\k)}$ defined by \eqref{d2skewmu}. Recall that $\omega_{j,k}$ is defined by $(\ref{omegajk})$ and $\tilde C_{j,h}$ by $\eqref{eq:tildecja}$.
Let $\frak{a}\geq 0$ such that $\frak{a}>\max{ \left(-2\theta_1,-2\theta_2\right)}$.
Then the following representation holds
{
\begin{equation}\label{vtheta}
v^{(\theta_1,\theta_2)}(t,x,y)= \sum_{k=0}^\infty (-\theta_1\theta_2)^k t^k \sum_{j=1}^4 \tilde F_{j,k}(\omega_{j,k}(x,y),\frak{a}\sqrt{t})
\end{equation}
}
where $\tilde F_{j,k}$ satisfies
\begin{equation}\label{eq:FjkFourier}
 \tilde F_{j,k}(\omega_{j,k},\frak a)= e^{\frac1{2} \left(\omega_{1,0}^2+\frak{a}^2 \right)} e^{- \frak{a}\omega_{j,k}} \mathcal{F}\left(w \mapsto e^{-\frac{w^2}{2}} \tilde c_j(y;iw+\frak{a})  \frac{-1}{(w-i \tilde{\frak{a}}_1)^{k+1}(w-i \tilde{\frak{a}}_2)^{k+1}} \right)(\omega_{j,k}-\frak{a})
\end{equation}
where $\tilde{ \frak{a}}_\frak{i}:=\frak{a} \sqrt{t}+ \theta_\frak{i} \sqrt{t}$.
\end{prpstn}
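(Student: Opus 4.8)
The plan is to start from the Fourier‑transform representation of $F_{j,k}$ that underlies \eqref{eq:Fjkbeta} (it is displayed for $t=1$ as \eqref{eq:Fourierseries}), i.e.\ the form of $v^{(\beta_1,\beta_2)}_\mu$ prior to evaluating the Fourier transforms explicitly, and to let $\k\to\infty$ in it with $\beta_1=1/\k$, $\beta_2=\beta_2(\k)$, $\mu=\mu(\k)$. I choose to work with this Fourier‑transform form rather than with the explicit $\mathscr{F}/\mathscr{G}$ expansion because it is insensitive to whether $\beta_1=\beta_2$, so no case distinction between the two branches of \eqref{eq:Fjkbeta} is needed. A preliminary remark: by Remark~\ref{rem:limitbetapetit} one has $\beta_{\frak{i}}(\k)\mu(\k)\to\theta_{\frak{i}}$, hence $-2\beta_{\frak{i}}(\k)\mu(\k)\to-2\theta_{\frak{i}}<\frak{a}$ while $\frak{a}\ge 0$, so $\frak{a}$ is admissible for Theorem~\ref{th:tdf2skewdrift} for all $\k$ large enough, and dropping finitely many $\k$ does not affect the pointwise limit.

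Next I would pass to the limit term by term. The ingredients converge as follows: $\beta_1(\k)\beta_2(\k)\mu(\k)^2 t=(\beta_1(\k)\mu(\k))(\beta_2(\k)\mu(\k))\,t\to\theta_1\theta_2 t$, so the scalar prefactor of the $k$‑th summand tends to $(-\theta_1\theta_2 t)^k$; since $z_1=0$ and $z_2=z$ are genuine discontinuities one has $\theta_1\neq 0$, whence $|\mu(\k)|=|b(0)+\k\theta_1|\to\infty$, so the rational prefactor $\bigl(1+(w^2-\frak{a}^2 t-2iw\frak{a}\sqrt t)/(\mu(\k)^2 t)\bigr)^{k}\to 1$ locally uniformly in $w$, the poles $i(\frak{a}+\beta_{\frak{i}}(\k)\mu(\k))\sqrt t$ converge to $i\tilde{\frak{a}}_{\frak{i}}=i(\frak{a}+\theta_{\frak{i}})\sqrt t$ with $\tilde{\frak{a}}_{\frak{i}}>0$ (a consequence of $\frak{a}\ge 0$ and $\frak{a}>\max(-2\theta_1,-2\theta_2)$), and $c_j(y,\mu(\k)\sqrt t;\,iw+\frak{a}\sqrt t)\to\tilde c_j(y;\,iw+\frak{a}\sqrt t)$ by the very definition \eqref{ctheta} of the polynomials $\tilde c_j$. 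Hence the function inside the Fourier transform of the $k$‑th summand converges pointwise to $w\mapsto e^{-w^2/2}\,\tilde c_j(y;iw+\frak{a}\sqrt t)\cdot(-1)\bigl[(w-i\tilde{\frak{a}}_1)(w-i\tilde{\frak{a}}_2)\bigr]^{-(k+1)}$; it carries the factor $e^{-w^2/2}$ and, for $\k$ large, its poles remain at distance $\ge\tfrac12\min_{\frak{i}}\tilde{\frak{a}}_{\frak{i}}$ from $\mathbb{R}$, so it is dominated uniformly in $\k$ by a fixed $L^1(\mathbb{R})$ function, and dominated convergence gives $F_{j,k}(\omega_{j,k},\frak{a}\sqrt t)\to\tilde F_{j,k}(\omega_{j,k},\frak{a}\sqrt t)$ with $\tilde F_{j,k}$ exactly \eqref{eq:FjkFourier} (the outer factor $e^{\frac12(\omega_{1,0}^2+\frak{a}^2 t)}e^{-\frak{a}\sqrt t\,\omega_{j,k}}$ being independent of $\k$).

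The only genuinely delicate step, and the main obstacle, is the interchange of $\lim_{\k\to\infty}$ with the sum over $k$ in \eqref{eq:Fourierseries}, which is what assembles the term‑by‑term limits into \eqref{vtheta}. I would justify it by exhibiting a bound on $\bigl|(\beta_1(\k)\beta_2(\k)\mu(\k)^2 t)^k F_{j,k}(\omega_{j,k},\frak{a}\sqrt t)\bigr|$ that is summable in $k$ and uniform in $\k$ for $\k$ large, obtained just as the estimates in the proof of Proposition~\ref{boundtheta}: one bounds $|F_{j,k}|$ through the explicit convolution $f_{k+1}$ of Lemma~\ref{lmm:Fourierf} and the Gaussian--polynomial factor, using that along the sequence $\beta_{\frak{i}}(\k)\mu(\k)$ and $\beta_1(\k)\beta_2(\k)\mu(\k)^2$ are bounded and $\frak{a}+\beta_{\frak{i}}(\k)\mu(\k)$ is bounded away from $0$. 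An equivalent route that bypasses the series altogether is to pass to the limit directly in the single ($t$‑dependent) integral \eqref{eq:integralbeta} by dominated convergence --- its integrand converges, with a uniform Gaussian majorant, to that of the limiting problem, in which $\beta_1\beta_2\bigl(u^2-\frak{a}^2+\mu^2-2iu\frak{a}\bigr)\to\theta_1\theta_2$ --- and then to re‑expand the limiting integral into a geometric series precisely as in the proof of Theorem~\ref{th:tdf2skewdrift}, which reproduces \eqref{vtheta}--\eqref{eq:FjkFourier}. In either approach the difficulty is exactly the uniformity in $\k$: the scalar $\beta_1(\k)\beta_2(\k)\mu(\k)^2$ converges, but one must control the individual terms $F_{j,k}$ along the whole sequence, and it is the estimates behind Proposition~\ref{boundtheta} that supply this control.
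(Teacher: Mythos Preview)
Your proposal is correct and matches the paper's argument. The paper's proof in fact takes what you call the ``equivalent route'': it passes to the limit directly in the integral \eqref{eq:integralbeta} by dominated convergence and then re-expands the limiting integral as a geometric series, invoking Proposition~\ref{boundtheta} to justify the exchange of integral and series; it also remarks, as you do, that $\tilde F_{j,k}$ is the pointwise limit of $F_{j,k}$ and that the chosen $\frak{a}$ is admissible for all large $\k$.
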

\begin{proof} Without loss of generality we can prove the statement for $t=1$. To prove it, it is sufficient to pass to the limit into the integral \eqref{eq:integralbeta}.
One then finds (\ref{vtheta}) and (\ref{eq:FjkFourier}). Proposition \ref{boundtheta} guarantees that integral and series can be exchanged. \\
Remark that $\tilde F_{j,k}$ can also be defined as the pointwise limit of the functions $F_{j,k}$ given in \eqref{eq:Fjkbeta} and therefore
\begin{equation} \label{eq:seriestheta}
\tilde F_{j,k}(\omega_{j,k},\frak{a}) = 
\begin{cases}
\sum_{n=0}^k \left(\frac{  (2k-n)!}{(k-n)! n! k! }\frac{e^{\frac12 \omega_{1,0}^2}}{(\theta_1-\theta_2)^{2k-n+1}}  \sum_{h=0}^2  \tilde C_{j,2-h}(y) \tilde{\mathscr{F}}_{h,n}(\omega_{j,k},\frak{a})\right),& \theta_1\neq \theta_2;\\
\frac{(-1)^{k+1}}{(2 k+1)!} \sum_{h=0}^2  \tilde{C}_{j,2-h}(y) e^{\frac12 \omega_{1,0}^2}\mathscr{G}_{h,0,2k+1}(\omega_{j,k},\frak{a},\theta_1), & \theta_1=\theta_2,
\end{cases}
\end{equation}
where $\disp{\tilde{\mathscr{F}}_{h,n}(\omega,\frak{a}):=\mathscr{G}_{h,0,n}(\omega,\frak{a},\theta_2)-(-1)^n \mathscr{G}_{h,0,n}(\omega, \frak{a}, \theta_1)}$. The function $\mathscr{G}_{h,0,n}$ was defined by \eqref{G}.\\
Notice that, due to our appropriate choice of $\frak{a}$, we can obtain the latter formula from \eqref{eq:FjkFourier} proceeding as for Theorem~\ref{th:tdf2skewdrift}. Indeed, since $\frak{a}$ is strictly larger than any pole of the limit expression, 
 there exists $\k_0$ such that, for any $\k>\k_0$, $\frak{a}$ is larger than $\max(-2\frac1{\k}\mu(\k), -2\beta_2(\k)\mu(\k))$.
\end{proof}

\subsection{\ Towards a uniform bound for $v^{(\theta_1,\theta_2)}$}

The main result in this section is the following proposition.
\begin{prpstn}[Uniform bound for $(x,y)\mapsto v^{(\theta_1,\theta_2)}(t,x,y)$] \label{boundtheta}
Let $\theta_1,\theta_2$ be any real numbers.
There exists a positive constant $C$, depending only on $\theta_1,\theta_2$, such that 
\begin{equation}\label{eq:boundv}
\sup_{x,y}\left|v^{(\theta_1,\theta_2)}(t,x,y) \right|\leq  \ \frac{C}{1-e^{-\frac{2 z^2}{t}}} \, .
\end{equation}
More precisely, one can take
\begin{equation} \label{BoundCtheta}
C=\begin{cases} \disp
1+ \max\left\{\psi(\theta_1,\theta_2),\psi(\theta_2,\theta_1)\right\} + \min\left\{1, \left|\frac{\theta_1\theta_2 \sqrt{t}}{\theta_1-\theta_2}\right| \left|\varphi(\theta_1\sqrt{t})-\varphi(\theta_2\sqrt{t})\right|\right\}  & \text{ if } \theta_1\neq \theta_2,\\
1+2 \sqrt{t} |\theta_1|\varphi(\theta_1 \sqrt{t})+3 \theta_1^2 t  & \text{ if } \theta_1=\theta_2,
\end{cases}
\end{equation}
where 
\begin{equation} \label{phi}
\varphi(w):=\sqrt{2 \pi }e^{\frac{w^2}{2}} \Phi^c(w)
\end{equation}
and
\[\psi(\theta_1,\theta_2):= |\theta_ 1| \sqrt{t} \varphi(\theta_{1} \sqrt{t})+ |\theta_2| \sqrt{t} \varphi(\theta_{2} \sqrt{t})+ \min\left\{2, \left(\left|\frac{\theta_1+\theta_2}{\theta_1-\theta_2}\right|-1\right) |\theta_1| \sqrt{t}\varphi(\theta_1 \sqrt{t})+2 \left|\frac{\theta_1\theta_2 \sqrt{t}}{\theta_1-\theta_2}\right| \varphi(\theta_2 \sqrt{t}) \right\}.\]

Moreover, if $R_N v^{(\theta_1,\theta_2)}$ denotes the remainder after the $(N+1)-th$ term of the series \eqref{vtheta} which represents $v^{(\theta_1,\theta_2)}$, then 
\begin{equation}\label{eq:boundrest}
\sup_{x,y}\left| R_N v^{(\theta_1,\theta_2)} (t,x,y)\right|\leq \ \frac{C}{1-e^{-\frac{2z^2}{t}}} e^{-\frac{2z^2}{t} (N+1)}.
\end{equation}
\end{prpstn}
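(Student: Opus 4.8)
The plan is to bound the $k$‑th term of the series \eqref{vtheta} uniformly in $(x,y)$ by $C\,e^{-2z^2k/t}$, with $C$ the constant of \eqref{BoundCtheta}, and then to sum the resulting geometric series; as usual it suffices to carry out the estimates for $t=1$. Two elementary facts drive the geometric decay. By \eqref{omegajk} and the non‑negativity of the $a_j$ in \eqref{aj} one has $\omega_{j,k}(x,y)=\omega_{j,0}(x,y)+2zk$ with $\omega_{j,0}\ge\omega_{1,0}=|y-x|\ge0$, so the arguments at which the Fourier transforms of \eqref{eq:FjkFourier} are evaluated grow linearly in $k$ and are never below $2zk$. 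Moreover the function $\varphi$ of \eqref{phi} is positive, strictly decreasing, and bounded by $\sqrt{\pi/2}$ on $[0,\infty)$ (elementary Gaussian‑tail estimates), so whenever a term of the form $\varphi\bigl(\omega_{j,0}+(\text{shift})\bigr)$ appears, its supremum over $(x,y)$ is attained at $x=y$ together with $a_j=0$; this is the configuration that produces $C$.

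I would first isolate the leading term $k=0$. Inserting $k=0$ into \eqref{eq:seriestheta} leaves only $\mathscr{G}_{h,0,0}$, $h=0,1,2$, which by the recursion for $\mathcal{J}_q$ reduce to $\mathcal{J}_0$ and $\mathcal{J}_1$; with $e^{\omega^2/2}\mathcal{J}_0(\omega,\tau)=\varphi(\omega+\tau)$ and $e^{\omega^2/2}\mathcal{J}_1(\omega,\tau)=-1$ the prefactor $e^{\omega_{1,0}^2/2}$ is absorbed (for $j\ge2$ leaving the harmless factor $e^{(\omega_{1,0}^2-\omega_{j,0}^2)/2}\le1$), and $\sum_{j=1}^4\tilde F_{j,0}$ becomes an explicit affine combination of $\varphi(\omega_{1,0}+\theta_1)$, $\varphi(\omega_{1,0}+\theta_2)$ and, for $j\ge2$, of $\varphi(\omega_{j,0}\pm\theta_1)$, $\varphi(\omega_{j,0}\pm\theta_2)$, with polynomial coefficients in $\theta_1,\theta_2,\frak{a}$. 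Since $v^{(\theta_1,\theta_2)}$ is independent of $\frak{a}$ (Proposition \ref{vthetalimit}), the $\frak{a}$‑dependence must cancel; majorizing what remains over $(x,y)$ via the monotonicity and boundedness of $\varphi$ and the inequality $|\varphi(a)-\varphi(b)|\le\varphi(\min(a,b))$ yields exactly $\psi(\theta_1,\theta_2)$, $\psi(\theta_2,\theta_1)$ and the other $\varphi$‑contributions in $C$; the $\min\{2,\cdots\}$ and $\min\{1,\cdots\}$ in \eqref{BoundCtheta} record that one keeps, between a crude bound (such as $|\Phi^c|\le1$) and a finer one through $\varphi$, whichever is smaller. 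The diagonal case $\theta_1=\theta_2$ is treated identically from the second line of \eqref{eq:seriestheta} (equivalently, as the removable limit $\theta_2\to\theta_1$ of the first) and produces the second alternative of \eqref{BoundCtheta}.

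For $k\ge1$ I would estimate $\tilde F_{j,k}$ from \eqref{eq:FjkFourier} by writing the Fourier transform as the convolution of $\mathcal{F}\!\left(w\mapsto e^{-w^2/2}\tilde c_j(y;iw+\frak{a})\right)$ --- a Gaussian times a Hermite polynomial --- with $\mathcal{F}\!\left(w\mapsto-[(w-i\tilde{\frak{a}}_1)(w-i\tilde{\frak{a}}_2)]^{-(k+1)}\right)=f_{k+1}(\cdot,\tilde{\frak{a}}_1,\tilde{\frak{a}}_2)$, given by Lemma \ref{lmm:Fourierf} (applicable since the admissible range of $\frak{a}$ forces $\tilde{\frak{a}}_1,\tilde{\frak{a}}_2>0$). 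As $f_{k+1}$ is, as a function, supported on $(-\infty,0)$ --- it involves only $g(\omega,a)=e^{a\omega}\mathbbm{1}_{(-\infty,0)}(\omega)$ times polynomials --- the convolution evaluated at $\omega_{j,k}-\frak{a}$ (positive once $k$ is large) is a Gaussian tail integral $\int_{\omega_{j,k}-\frak{a}}^{\infty}e^{-s^2/2}(\text{poly})\,ds$, hence at most a polynomial times $e^{-(\omega_{j,k}-\frak{a})^2/2}$. Combining this with the prefactor $e^{(\omega_{1,0}^2+\frak{a}^2)/2}e^{-\frak{a}\omega_{j,k}}$ of \eqref{eq:FjkFourier} and writing $\omega_{j,k}=\omega_{1,0}+D$ with $D:=a_j+2zk\ge2zk$, the exponential weight collapses --- the $\frak{a}$‑contributions cancelling --- to $e^{-\frac12 D^2-\omega_{1,0}D}\le e^{-2z^2k^2}\le e^{-2z^2k}$. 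It then remains to check that the accompanying combinatorial factors --- the binomial coefficients from the Hermite polynomials and from $f_{k+1}$, and the factor $(\tilde{\frak{a}}_1-\tilde{\frak{a}}_2)^{-(2k+1)}=(\theta_1-\theta_2)^{-(2k+1)}$ --- together with $|\theta_1\theta_2|^k$ are absorbed by this super‑geometric weight, so that, for all $k\ge0$, the $k$‑th term of \eqref{vtheta} is at most $C\,e^{-2z^2k/t}$ with the same $C$. Summing over $k\ge0$ gives $\sup_{x,y}\bigl|v^{(\theta_1,\theta_2)}(t,x,y)\bigr|\le C\sum_{k\ge0}e^{-2z^2k/t}=\dfrac{C}{1-e^{-2z^2/t}}$, i.e.\ \eqref{eq:boundv}; summing from $k=N+1$ gives \eqref{eq:boundrest}; and this absolute, uniform per‑term estimate is precisely what justifies the interchanges of sum and integral invoked in the proofs of Theorem \ref{th:tdf2skewdrift} and Proposition \ref{vthetalimit}.

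The main obstacle is this last verification: proving the per‑term bound with the \emph{sharp} constant $C$ dictated by the term $k=0$. This requires carrying the combinatorial coefficients faithfully through the convolution, ensuring the tail‑integral estimate does not degrade the Gaussian rate $e^{-(\omega_{j,k}-\frak{a})^2/2}$, and checking that after multiplication by $|\theta_1\theta_2 t|^k$ and by $|\theta_1-\theta_2|^{-(2k+1)}$ --- only a removable singularity at $\theta_1=\theta_2$ --- the terms with $k\ge1$ remain dominated by $C\,e^{-2z^2k/t}$. Pinning down, for $k=0$, the exact configuration $(x,y)$ realizing the supremum and verifying that no $\frak{a}$‑dependence survives are the remaining delicate points.
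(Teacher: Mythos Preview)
Your overall strategy --- bound the $k$-th summand of \eqref{vtheta} by $C\,e^{-2z^2k/t}$ uniformly in $(x,y)$ and sum the geometric series --- is the paper's, and your reading of where the specific constant \eqref{BoundCtheta} originates (essentially the $k=0$ contribution, maximized over the three relative positions of $y$ with respect to $0$ and $z$) is correct. The genuine gap is precisely what you flag as the ``main obstacle'': with your convolution splitting you cannot get the \emph{same} $C$ for every $k$, and the paper sidesteps this by splitting differently.

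You factor the integrand in \eqref{eq:FjkFourier} as $e^{-w^2/2}\tilde c_j(y;iw+\frak a)$ times $[(w-i\tilde{\frak a}_1)(w-i\tilde{\frak a}_2)]^{-(k+1)}$, so the first Fourier factor is a Gaussian times a degree-$\le 2$ Hermite polynomial and the second is $f_{k+1}$. The paper instead absorbs \emph{one} copy of $[(w-i\tilde{\frak a}_1)(w-i\tilde{\frak a}_2)]^{-1}$ into the first factor, setting
\[
\tilde p_j(w):=\frac{-\,\tilde c_j(y;iw+\frak a)}{(w-i\tilde{\frak a}_1)(w-i\tilde{\frak a}_2)},
\]
so that $\tilde F_{j,k}$ becomes (up to the explicit exponential prefactor) $\mathcal{F}\bigl(\tilde p_j(w)e^{-w^2/2}\bigr)*\dfrac{f_k}{\sqrt{2\pi}}$. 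This buys two things at once. First, $\tilde p_j$ is a proper rational function whose partial fractions produce, after the Fourier transform, an explicit finite linear combination of terms $e^{-\omega^2/2}\varphi(\omega+\tilde{\frak a}_{\frak i})$; the coefficients of this combination are exactly the $m_j$ that sum to $C$, and they are \emph{independent of $k$} because $\tilde p_j$ is. Second, the remaining factor is $f_k$, not $f_{k+1}$, and the paper bounds the convolution not by a tail-integral estimate but by the clean $L^1$ identity (Lemma~\ref{L1})
\[
\bigl\|e^{-\frak a\,\cdot}f_k(\cdot,\tilde{\frak a}_1,\tilde{\frak a}_2)\bigr\|_{L^1}=\frac{\sqrt{2\pi}}{|\theta_1\theta_2|^k},
\]
which cancels the prefactor $|\theta_1\theta_2|^k$ in \eqref{vtheta} \emph{exactly}. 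Since $f_k$ is supported on $(-\infty,0]$ and both $s\mapsto e^{-s^2/2}$ and $\varphi$ are decreasing on $[0,\infty)$, pulling the sup out of the convolution gives immediately $|\tilde F_{j,k}|\le m_j\,|\theta_1\theta_2|^{-k}\,e^{-(\omega_{j,k}^2-\omega_{1,0}^2)/2}$, i.e.\ inequality \eqref{ineq:323} with the sharp $m_j$ for every $k$.

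Your route would still yield \eqref{eq:boundv} with \emph{some} constant, because the super-geometric factor $e^{-2z^2k^2/t}$ does dominate the polynomial growth coming from the Hermite/binomial coefficients. But the factor $(\theta_1-\theta_2)^{-(2k+1)}$ you carry is dangerous once you have passed to absolute values (the cancellation making it a removable singularity is then lost), and in any case you would not recover the precise $C$ of \eqref{BoundCtheta}, nor the remainder bound \eqref{eq:boundrest} with that constant. The two missing ingredients are the $\tilde p_j$ factorization and the $L^1$ computation of Lemma~\ref{L1}; with them, there is no separate treatment of $k=0$ versus $k\ge1$ and no combinatorics to track.
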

Considering formula (\ref{vtheta}), it is clear that the proof of (\ref{eq:boundv}) is complete as soon as one finds an appropriate bound for $\sup_{x,y} \tilde{F}_{j,k}(\omega_{j,k},\frak{a}\sqrt{t})$ for each $j\in \{1,2,3,4\}$ and $k\in \mathbb{N}$. This is done in the next lemma.

\begin{lmm} \label{termboundtheta}
Let define $\tilde{F}_{j,k}$ by  \eqref{eq:FjkFourier} and $\omega_{j,k}$ by \eqref{omegajk} for any $j=1,2,3,4$ and $k\in\mathbb{N}$. Then there exists a positive constant $C_j$, depending on $j$ and $(\theta_1,\theta_2)$ but not on $k$, such that
\[ 
\sup_{x,y} \left| \tilde{F}_{j,k}(\omega_{j,k},\frak{a}\sqrt{t}) \right| \leq C_j \ 
 \Big( \frac{e^{-\frac{2 z^2}{t}}}{|\theta_1\theta_2 t |} \Big)^k.
\] 
\end{lmm}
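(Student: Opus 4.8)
The plan is to reduce the claim, via \eqref{eq:FjkFourier} (equivalently the closed form \eqref{eq:seriestheta}), to an estimate on a single convolution. First I would use Lemma~\ref{lmm:Fourierf} to write $\tilde F_{j,k}(\omega_{j,k},\frak a\sqrt t)$ as the prefactor $e^{\frac12(\omega_{1,0}^2+\frak a^2)}e^{-\frak a\omega_{j,k}}$ times $\tfrac1{\sqrt{2\pi}}$ times the convolution, evaluated at $\omega_{j,k}-\frak a$, of two explicit functions: on one side $\mathcal F\!\left(w\mapsto e^{-w^2/2}\tilde c_j(y;iw+\frak a)\right)$, a polynomial of degree at most two in its argument times a Gaussian $e^{-\cdot^2/2}$, whose coefficients are built only from $1,\theta_1,\theta_2,\theta_1\theta_2,\frak a$ (see \eqref{ctheta}--\eqref{eq:tildecja}) and are in particular uniformly bounded in $y$; on the other side $f_{k+1}(\cdot,\tilde{\frak a}_1,\tilde{\frak a}_2)$ from \eqref{f:Fourierf}, which is supported on $(-\infty,0)$, equals a polynomial of degree $k$ times the exponentials $e^{\tilde{\frak a}_\frak i\cdot}$ with $\tilde{\frak a}_\frak i=(\frak a+\theta_\frak i)\sqrt t>0$ (here the hypothesis $\frak a>\max(-2\theta_1,-2\theta_2)$ is used), and carries the scalar factor $(\tilde{\frak a}_1-\tilde{\frak a}_2)^{-(2k+1)}=((\theta_1-\theta_2)\sqrt t)^{-(2k+1)}$. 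Equivalently one works with \eqref{eq:seriestheta}, in which every $\mathscr G_{h,0,n}(\omega_{j,k},\cdot,\cdot)$ is built from the explicit $\mathcal J_q$, each of which manifestly carries a factor $e^{-\frac12\omega_{j,k}^2}$ and, through $\varphi$ of \eqref{phi}, a factor bounded on $\{\omega_{j,k}\ge0\}$.

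The geometric heart of the argument is the elementary identity $\omega_{j,k}(x,y)=\omega_{1,0}(x,y)+\frac{a_j(x,y)+2zk}{\sqrt t}$ with $\omega_{1,0}=|y-x|/\sqrt t\ge0$ and $a_j\ge0$, so $\omega_{j,k}\ge\omega_{1,0}+\frac{2zk}{\sqrt t}$. In the convolution the Gaussian factor is only ever evaluated at arguments $\ge\omega_{j,k}-\frak a$, since $f_{k+1}$ lives on the negative half‑line; combining its value with the prefactor $e^{\frac12\omega_{1,0}^2}$ (and using, with $\Delta:=\frac{a_j+2zk}{\sqrt t}$, that $e^{\frac12\omega_{1,0}^2}e^{-\frac12(\omega_{1,0}+\Delta)^2}\le e^{-\omega_{1,0}\Delta}e^{-\frac12\Delta^2}$) yields, after taking $\sup_{x,y}$, a factor bounded by $e^{O(k\log k)}\,e^{-2z^2k^2/t}$: the residual exponential factors dominate both the polynomials in $\omega_{j,k}$ occurring in the convolution and the lower bound $\Delta^2\ge 4z^2k^2/t$, and here $z>0$ is essential, since $\sup_{\omega\ge 2zk/\sqrt t}(1+\omega)^{O(k)}e^{-\omega^2/4}$ is of size only $e^{O(k\log k)}$. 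Splitting $e^{-2z^2k^2/t}=\big(e^{-2z^2/t}\big)^k\,e^{-2z^2k(k-1)/t}$ isolates the announced factor $\big(e^{-2z^2/t}\big)^k$ and leaves the super‑exponentially small reserve $e^{-2z^2k(k-1)/t}$.

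It then remains to collect all remaining $k$‑dependence: the combinatorial coefficients in \eqref{f:Fourierf}/\eqref{eq:seriestheta} (binomials and factorials, including the double‑factorial growth coming from $\mathcal J_q$ with $q=O(k)$), the scalar factor $|(\theta_1-\theta_2)\sqrt t|^{-(2k+1)}$, and the geometric factor $|\theta_1\theta_2 t|^{-k}$ that we artificially introduce (recall $\theta_1,\theta_2$ are fixed, nonzero). All of these are at most $e^{O(k\log k)}$, hence are absorbed by the reserve $e^{-2z^2k(k-1)/t}$ since $k\log k=o(k^2)$, leaving a finite constant $C_j$ depending only on $j$ and $(\theta_1,\theta_2)$ (and on $t,z$). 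The case $\theta_1=\theta_2$ is handled identically, starting from the second line of \eqref{eq:seriestheta} with $\mathscr G_{h,0,2k+1}$, and the finitely many small values of $k$, for which $\omega_{j,k}+\theta_\frak i\sqrt t$ need not be positive and $\varphi$ is not yet small there, are bounded individually and swallowed into $C_j$. The main obstacle is precisely this last bookkeeping: verifying that the super‑exponential reserve really dominates the combined combinatorial, scalar and polynomial growth \emph{uniformly in $x,y$}, the delicate point being that the polynomials appearing inside $\mathscr G$ have degree growing with $k$ and are evaluated at $\omega_{j,k}$, which is unbounded in $(x,y)$.
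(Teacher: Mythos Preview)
Your approach is correct but takes a genuinely different route from the paper's. You brute-force the estimate: pull out the Gaussian factor $e^{-\frac12(\omega_{j,k}^2-\omega_{1,0}^2)}$, observe that it decays like $e^{-2z^2k^2/t}$ (quadratically in $k$), and then argue that the remaining polynomial-in-$\omega_{j,k}$ and combinatorial factors — all of size $e^{O(k\log k)}$ — are swallowed by the super-exponential reserve $e^{-2z^2k(k-1)/t}$. This works, and your handling of the uniformity in $(x,y)$ (splitting $\omega_{j,k}=\omega_{1,0}+\Delta$ and using $\Delta\ge 2zk/\sqrt t$) is the right idea.

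The paper instead avoids any $e^{O(k\log k)}$ loss. Its key move is to absorb one copy of $[(w-i\tilde{\frak a}_1)(w-i\tilde{\frak a}_2)]^{-1}$ into the numerator, defining $\tilde p_j(w):=-\tilde c_j(y;iw+\frak a)/[(w-i\tilde{\frak a}_1)(w-i\tilde{\frak a}_2)]$, so that $\tilde F_{j,k}$ becomes the convolution of $\mathcal F(\tilde p_j(w)e^{-w^2/2})$ with $f_k$ (not $f_{k+1}$). The point is that $\mathcal F(\tilde p_j(w)e^{-w^2/2})(\omega)$ is computed explicitly as $e^{-\omega^2/2}$ times a \emph{bounded, decreasing} linear combination of the functions $\varphi(\omega+\tilde{\frak a}_\frak i)$; this replaces your degree-two polynomial, which is unbounded. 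One then pulls out $\varphi(\omega_{j,k}+\theta_\frak i)\le\varphi(\theta_\frak i)$ and is left with $e^{-(\omega_{j,k}-\frak a)^2/2}\cdot\|e^{-\frak a\,\cdot}f_k\|_{L^1}$. The auxiliary Lemma~\ref{L1} then computes this $L^1$ norm \emph{exactly} as $\sqrt{2\pi}/|\theta_1\theta_2|^k$. So the factor $|\theta_1\theta_2 t|^{-k}$ arises naturally rather than being ``artificially introduced'', and the constants $C_j=m_j$ are explicit, depend only on $(\theta_1,\theta_2)$ (and $t$), and \emph{not} on $z$. These explicit constants feed directly into the bound \eqref{BoundCtheta} for $C$ in Proposition~\ref{boundtheta}, which governs the acceptance probability of the rejection sampler; your non-explicit, $z$-dependent constants would not yield that.

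In short: your argument proves the lemma as stated (independence of $k$, uniformity in $x,y$), and is perhaps conceptually lighter since it dispenses with the $\tilde p_j$ trick and Lemma~\ref{L1}; the paper's argument is sharper and produces the explicit constants that the algorithm needs.
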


\begin{proof}[Proof of Lemma \ref{termboundtheta}] 
It becomes straightforward once one shows that, for each $j=1,\ldots,4$ and $k\in\mathbb{N}$, there exists a positive constant $C_j$ not depending on $k$ such that
\begin{equation} \label{ineq:323}
\sup_{x,y} \left| \tilde{F}_{j,k}(\omega_{j,k},\frak{a}\sqrt{t})\right|
\leq C_j \, \frac{e^{-\frac{(\omega_{j,k}^2-\omega_{1,0}^2)}{2}}}{|\theta_1\theta_2 t |^k}.
\end{equation} 
Indeed, since $a_j(x,y)\geq 0$ for all $j=1,2,3,4$, the following estimate holds
$$
\frac12(\omega_{1,0}^2 -\omega_{j,k}^2 ) 
\leq - \frac{1}{2t}\left(a_j(x,y)+2 k z\right)^2 \leq - \frac{2 z^2}{t} k .
$$

Let us prove \eqref{ineq:323}. To simplify the notation, in the rest of the proof, one take $t=1$.
Then we define $\tilde{\frak{a}}_{\frak i}=\frak{a}+\theta_\frak{i}$ for $\frak{i}=1,2$ and also  
\[
\disp \tilde{p}_j(w):=\frac{-\tilde c_j(y;i w+\frak{a})}{(w-i\tilde{\frak{a}}_1)(w-i \tilde{\frak{a}}_2)}
\]
 where the polynomials $\tilde c_j(y;w)$ are given by (\ref{ctheta}). 
Equations (\ref{vtheta}, \ref{eq:FjkFourier}) can be rewritten as
\[\begin{cases}
v^{(\theta_1,\theta_2)}(1,x,y) = \sum_{k=0}^{\infty} (-\theta_1\theta_2)^k \sum_{j=1}^4 \tilde{F}_{j,k}(\omega_{j,k},\frak{a}),\\
\tilde{F}_{j,k}(\omega_{j,k},\frak{a})=  
\disp - e^{\frac1{2} \left(\omega_{1,0}^2+\frak{a}^2 \right)} e^{- \frak{a}\omega_{j,k}} \,  \mathcal{F}\left(\tilde{p}_j(w) e^{-\frac{w^2}2}\right)*\frac{f_k(w,\tilde{\frak a}_1,\tilde{\frak a}_2)}{\sqrt{2\pi}} (\omega_{j,k}-\frak{a}), & \quad j=1,2,3,4,
\end{cases}\]
where $f_k$ is defined in (\ref{f:Fourierf}).

The rest of the proof of Lemma \ref{termboundtheta} is tricky and divided in several steps. \\
First one needs to compute the Fourier transform of the product between $\tilde{p}_j$ and a Gaussian kernel: it will lead to the product between a  Gaussian kernel and a linear combination of translations of the function $\varphi$ defined by \eqref{phi}. Then one has to compute the convolution of this quantity with the function $f_k(\cdot,\tilde{\frak a}_1,\tilde{\frak a}_2)$, which is positive with support on $(-\infty,0]$. Its $L^1$-norm is computed in Lemma~\ref{L1}. We will complete the proof finding an estimate of the convolution which uses this latter information.\\
Let us proceed in giving explicitly the functions $\tilde{p}_j $, $j=1,2,3,4$. \\
\[\begin{cases}
\tilde{p}_1(w)=1\\
\tilde{p}_2(w) 
 =\disp \theta_1 \frac{i}{w-i\tilde{\frak{a}}_1} + 2\mathbbm{1}_{\{y\geq z\}} \theta_1\theta_2 \frac{-1}{(w-i\tilde{\frak{a}}_1) (w-i \tilde{\frak{a}}_2)}\\
\tilde{p}_3 (w)
= \disp -\theta_2 \frac{-i}{w-i \tilde{\frak{a}}_2} - 2\mathbbm{1}_{\{y\leq 0\}} \theta_1\theta_2 \frac{-1}{(w-i \tilde{\frak{a}}_1)(w-i \tilde{\frak{a}}_2)}\\
\tilde{p}_4 (w) 
= \disp -\left(1-2\mathbbm{1}_{[0,z)}(y)\right) \theta_1\theta_2 \frac{-1}{(w-i \tilde{\frak{a}}_1)(w-i \tilde{\frak{a}}_2)}.
\end{cases}
\]
Thus, if $\theta_1\neq \theta_2$ then
\[\begin{cases}
	\mathcal{F}\left(\tilde{p}_1(w) e^{-\frac{w^2}{2}}\right)(\omega)=e^{-\frac{\omega^2}{2}}\\
	\mathcal{F}\left(\tilde{p}_2(w) e^{-\frac{w^2}{2}}\right)(\omega)= e^{-\frac{\omega^2}{2}} \left( - \theta_1 \frac{\theta_1+(2\mathbbm{1}_{\{y \geq z\}}-1)\theta_2}{\theta_1-\theta_2}\, \varphi(\omega+\tilde{\frak{a}}_1) +2\mathbbm{1}_{\{y\geq z\}} \frac{\theta_1\theta_2}{\theta_1-\theta_2} \, \varphi{(\omega+\tilde{\frak{a}}_2)}\right)  \\
	\mathcal{F}\left(\tilde{p}_3(w) e^{-\frac{w^2}{2}}\right)(\omega)= e^{-\frac{\omega^2}{2}}\left(- \theta_2 \frac{\theta_2 -\left(2\mathbbm{1}_{\{y>0\}}-1\right) \theta_1}{\theta_1-\theta_2} \, \varphi{(\omega+\tilde{\frak{a}}_2)} -2\mathbbm{1}_{\{y \leq 0\}} \frac{\theta_1\theta_2}{\theta_1-\theta_2} \, \varphi{(\omega+\tilde{\frak{a}}_1)}\right)\\
	\mathcal{F}\left(\tilde{p}_4(w) e^{-\frac{w^2}{2}}\right)(\omega)
=e^{-\frac{\omega^2}{2}}
\left(2\mathbbm{1}_{[0,z)}(y)-1\right)  \frac{\theta_1\theta_2}{\theta_1-\theta_2} \, \left( \varphi{(\omega+\tilde{\frak{a}}_2)} - \varphi{(\omega+\tilde{\frak{a}}_1)} \right).
\end{cases}
\]
The Gaussian component is positive and decreasing on $(0,+\infty)$, and the function $\varphi$, defined by \eqref{phi}, is positive and decreasing on $\mathbb{R}$.\\
The bound of the convolutions
$
\left|\mathcal{F}\left(\tilde{p}_j(w) e^{-\frac{w^2}{2}}\right) * \frac{f_k(w,\tilde{\frak a}_1,\tilde{\frak a}_2)}{\sqrt{2\pi}}\right|(\omega_{j,k}-\frak{a}) $
is then controlled by 
\[\left|\left(e^{-\frac{w^2}{2}} \varphi(\tilde{w+\frak{a}}_\frak i)\right) * {f_k(w,\tilde{\frak a}_1,\tilde{\frak a}_2)}\right|(\omega_{j,k}-\frak{a})= \left|\int_\mathbb{R} e^{-\frac{(\omega_{j,k}-\frak{a}-y)^2}{2}} \varphi{(\theta_\frak{i}+\omega_{j,k}-y)} f_k(y,\tilde{\frak a}_1,\tilde{\frak a}_2) dy \right|, \quad \frak i=1,2.\]
Notice that $f_k$ has support on ${(-\infty,0]}$.
Then
\[\begin{split} \left|\left(e^{-\frac{w^2}{2}} \varphi(w+\tilde{\frak{a}}_\frak{i})\right) * \frac{f_k(w,\tilde{\frak a}_1,\tilde{\frak a}_2)}{\sqrt{2\pi}}\right|(\omega_{j,k}-\frak{a}) 
& \leq \varphi(\omega_{j,k}+\theta_\frak{i}) e^{\frak{a} \omega_{j,k}} e^{-\frac{\frak{a}^2}{2}} \int_{-\infty}^{0} e^{-\frac{(\omega_{j,k}-y)^2}{2}} \left| e^{-\frak{a} y}\frac{f_k(y,\tilde{\frak{a}}_1,\tilde{\frak{a}}_2)}{\sqrt{2\pi}}\right| dy\\
& \leq  \varphi(\omega_{j,k}+\theta_\frak{i}) e^{-\frac{(\omega_{j,k}-\frak{a})^2}{2}} \frac1{\sqrt{2\pi}}\| e^{-\frak{a} y}f_k(y,\tilde{\frak{a}}_1,\tilde{\frak{a}}_2) \|_{L^1} .
\end{split}\]
Suppose we knew that $ \| e^{-\frak{a} \cdot}f_k(\cdot,\tilde{\frak{a}}_1,\tilde{\frak{a}}_2) \|_{L^1}=\frac{\sqrt{2\pi}}{|\theta_1\theta_2|^k}$,  as we will prove in Lemma~\ref{L1} below.
We can then complete the proof in the following way.
Since $\omega_{j,k}$ is non negative, $\varphi(\omega_{j,k}+\theta_\frak{i})\leq \varphi(\theta_\frak{i})$ and
\[
\begin{split}
|v^{(\theta_1,\theta_2)}(1,x,y)|& =\sum_{k=0}^{\infty} |\theta_1\theta_2|^k \sum_{j=1}^4 \left| \tilde{F}_{j,k}(\omega_{j,k},\frak{a})\right| 
\leq \sum_{k=0}^{\infty}  \sum_{j=1}^4 m_j \, e^{-\frac{\omega_{j,k}^2-\omega_{1,0}^2}{2}} 
\end{split}
\]
where the coefficients $m_j$ are given by
\[\begin{cases} m_1=1\\
m_2 = |\theta_1|\varphi(\theta_1)+ \mathbbm{1}_{\{y \geq z\}} \min\left(2 , \, 2 \left|\frac{\theta_1\theta_2}{\theta_1-\theta_2}\right|\varphi(\theta_2) +|\theta_1| \left(\left|\frac{\theta_1+\theta_2}{\theta_1-\theta_2}\right|-1\right) \varphi(\theta_1)\right)\\
m_3 = |\theta_2|\varphi(\theta_2)+ \mathbbm{1}_{\{y <0 \}} \min\left(2 , \, 2 \left|\frac{\theta_1\theta_2}{\theta_1-\theta_2}\right|\varphi(\theta_1) +|\theta_2| \left(\left|\frac{\theta_1+\theta_2}{\theta_1-\theta_2}\right|-1\right) \varphi(\theta_2)\right)\\
m_4= \min \left( 1, \, \left|\frac{\theta_1\theta_2}{\theta_1-\theta_2}\right|\left|\varphi(\theta_1)-\varphi(\theta_2)\right|\right).
\end{cases}\]
The fact that $e^{-\frac{\omega_{j,k}^2-\omega_{j,0}^2}{2}} \leq e^{-\frac{2 z^2}{t}k}$ yields the conclusion. 

If $\theta_1=\theta_2$ then, analogously,
\[\begin{cases}
	\mathcal{F}\left(\tilde p_1(w) e^{-\frac{w^2}2}\right)(\omega) =e^{-\frac{\omega^2}2}\\
	\mathcal{F}\left(\tilde p_2(w) e^{-\frac{w^2}2}\right) (\omega) =e^{-\frac{\omega^2}2} \left[- \theta_1 \varphi(\omega+\tilde{\frak{a}}_1)+2\mathbbm{1}_{\{y\geq z\}}\theta_1^2 \left(1- (\omega+\tilde{\frak{a}}_1) \varphi(\omega+\tilde{\frak{a}}_1)\right) \right]\\
	 \mathcal{F}\left(\tilde p_3(w) e^{-\frac{w^2}2}\right)(\omega)  =e^{-\frac{\omega^2}2} \left[- \theta_1 \varphi(\omega+\tilde{\frak{a}}_1)-2\mathbbm{1}_{\{y< 0\}}\theta_1^2 \left(1- (\omega+\tilde{\frak{a}}_1) \varphi(\omega+\tilde{\frak{a}}_1)\right) \right]\\
	 \mathcal{F}\left(\tilde p_4(w) e^{-\frac{w^2}2}\right)(\omega) 
=e^{-\frac{\omega^2}2} \left(2\mathbbm{1}_{[0,z)}(y)-1\right)\theta_1^2 \left(1- (\omega+\tilde{\frak{a}}_1) \varphi(\omega+\tilde{\frak{a}}_1)\right).
\end{cases}\]
Following the same procedure as before, the conclusion comes from the fact that $0\leq w \, \varphi(w) \leq 1$ for $w\geq 0$.

\end{proof}

\begin{lmm} \label{L1}
Suppose $a\neq 0$, $a_1,a_2 >0$ and $k\in \mathbb{N^*}$. Then 
\[
\left\|\mathcal{F}\left(w \mapsto \frac1{(w-i a)^{k}}\right)(\omega)\right\|_{L^1}=\frac{\sqrt{2\pi}}{|a|^k} \quad \textrm{ and } \quad 
\|f_k(\cdot,a_1,a_2)\|_{L^1}=  \frac{\sqrt{2\pi}}{(a_1 a_2 )^k},
\]
where $f_k$ is defined in \eqref{f:Fourierf}. \end{lmm}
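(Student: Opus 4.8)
The plan is to reduce both statements to the single Fourier transform $\mathcal{F}(w\mapsto (w-ia)^{-k})$, which Lemma~\ref{lmm:Fourierf} has already computed, and then to obtain the second $L^1$-identity from the first via the convolution theorem, using positivity to turn the $L^1$-norm of a convolution into a product of $L^1$-norms.

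For the first identity I would start from Lemma~\ref{lmm:Fourierf}: replacing $k$ by $k-1$ there gives, for $a>0$ and $k\ge 1$,
\[
\mathcal{F}\!\left(w\mapsto (w-ia)^{-k}\right)(\omega)= i^{k}\sqrt{2\pi}\,\phi_{k,a}(\omega),\qquad
\phi_{k,a}(\omega):=\frac{(-\omega)^{k-1}}{(k-1)!}\,g(\omega,a)\ge 0,
\]
with $g$ as in \eqref{f:goma}. Taking moduli removes the factor $i^{k}$, and a Gamma-integral computation (substitute $t=-a\omega$) gives $\|\phi_{k,a}\|_{L^1}=a^{-k}$, hence the claimed value $\sqrt{2\pi}/a^{k}$. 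The case $a<0$ is then reduced to $|a|>0$ by writing $(w-ia)^{-k}=(-1)^{k}\bigl((-w)-i|a|\bigr)^{-k}$ and noting that $\omega\mapsto-\omega$ preserves $L^1$-norms, so that the norm equals $\sqrt{2\pi}/|a|^{k}$ for all $a\ne 0$.

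For the second identity, set $f(w)=(w-ia_1)^{-k}$ and $g(w)=(w-ia_2)^{-k}$ with $a_1,a_2>0$ and $k\ge 1$; both lie in $L^2(\mathbb{R})$ (no real pole, decay like $|w|^{-k}$ with $k\ge 1$), so I may apply $\mathcal{F}(fg)=\frac1{\sqrt{2\pi}}\,\mathcal{F}(f)*\mathcal{F}(g)$. By \eqref{f:Fourierf} (shifting the index from $k+1$ to $k$), the left-hand side is $-f_k(\cdot,a_1,a_2)$, and inserting the formula from the first step gives, since $i^{k}\cdot i^{k}=(-1)^{k}$,
\[
f_k(\cdot,a_1,a_2)=(-1)^{k+1}\sqrt{2\pi}\,\bigl(\phi_{k,a_1}*\phi_{k,a_2}\bigr).
\]
As $\phi_{k,a_1}$ and $\phi_{k,a_2}$ are nonnegative and integrable, their convolution is nonnegative, so $|f_k(\cdot,a_1,a_2)|=\sqrt{2\pi}\,(\phi_{k,a_1}*\phi_{k,a_2})$; integrating, and using that the integral of a convolution of two integrable functions is the product of the integrals,
\[
\|f_k(\cdot,a_1,a_2)\|_{L^1}=\sqrt{2\pi}\,\|\phi_{k,a_1}\|_{L^1}\|\phi_{k,a_2}\|_{L^1}=\frac{\sqrt{2\pi}}{(a_1a_2)^{k}}.
\]
Note that this route does not use the partial-fraction expansion displayed in \eqref{f:Fourierf}, and so covers the case $a_1=a_2$ as well.

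The only genuinely delicate point is the bookkeeping of the powers of $i$: it is precisely the identity $i^{k}\cdot i^{k}=(-1)^{k}$ that makes $f_k(\cdot,a_1,a_2)$ have the constant sign $(-1)^{k+1}$ on $(-\infty,0]$, and it is this constant sign which lets the $L^1$-norm split as a product. Everything else is the elementary Gamma computation and the standard $L^2$ product/convolution theorem; the explicit expression for $f_k$ quoted from \cite{DMR} in \eqref{f:Fourierf} is used only to identify $f_k$ with $\mathcal{F}(fg)$ up to sign, so no resummation of that formula is needed.
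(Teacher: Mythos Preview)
Your proof is correct and follows essentially the same strategy as the paper: both use the explicit Fourier transform from Lemma~\ref{lmm:Fourierf}, exploit the nonnegativity of the resulting functions, and compute the second norm via the convolution structure of $f_k$. The only cosmetic differences are that the paper obtains the first identity by an integration-by-parts recursion rather than your direct Gamma integral, and for the second identity it writes out the double integral and applies Fubini explicitly rather than invoking the abstract fact that the integral of a convolution of nonnegative $L^1$ functions is the product of the integrals.
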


\begin{proof} If $a>0$,  by Lemma \ref{lmm:Fourierf}, 
$
\left|\mathcal{F}\left(w \mapsto \frac1{(w-i a)^{k+1}}\right)(\omega)\right|=\sqrt{2\pi} \mathbbm{1}_{\mathbb{R}^-}(\omega) \frac{|\omega|^k}{k!} e^{a \omega}
$.\\
Integrating by part,
\[\begin{split}
& \left\|\mathcal{F}\left(w \mapsto \frac1{(w-i a)^{k+1}}\right)(\omega)\right\|_{L^1}=\frac{\sqrt{2\pi}}{ a\  k!}\int_{-\infty}^0 a\ (-\omega)^k e^{a \omega}\, d\omega =\frac{1}{a} \left\|\mathcal{F}\left(w \mapsto \frac1{(w-i a)^{k}}\right)(\omega)\right\|_{L^1}.
\end{split}\]
So the first identity follows from the inductive hypothesis.\\
 If $a<0$ then 
$
\mathcal{F}\left(w \mapsto \frac1{(w-i a)^{k+1}}\right)(\omega)=-i^{k+1}\sqrt{2\pi}\frac{(-\omega)^k}{k!} e^{a\omega}\mathbbm{1}_{\mathbb{R}^+}(\omega)
$
 and the proof works as well.\\
To compute the second norm proceed as follows:
\[\begin{split}
\|f_{k+1}(\cdot,a_1,a_2)\|_{L^1}& =\frac1{\sqrt{2\pi}}\int_\mathbb{R} \left|\mathcal{F}\left(\frac{1}{(w-i a_1)^{k+1}}\right)*\mathcal{F}\left(\frac{1}{(w-i a_2)^{k+1}}\right)(\omega)\right| d\omega =\\
& = \int_\mathbb{R}\left| (-1)^{k} \sqrt{2\pi} (k!)^2 \int_\mathbb{R} (-y)^k e^{a_1 y} \mathbbm{1}_{\mathbb{R}_-}(y) (y-\omega)^k e^{a_2 (\omega-y)} \mathbbm{1}_{\mathbb{R}_-}(\omega-y) dy \right| d\omega
\end{split}\]
Since the integrand is positive, we can exchange the integration order and then use the previous result to conclude:
\[\begin{split}
\|f_{k+1}(\cdot,a_1,a_2)\|_{L^1} & =  \sqrt{2\pi} (k!)^2 \left(\int_\mathbb{R}  (-y)^k e^{a_1 y} \mathbbm{1}_{\mathbb{R}_-}(y) \, dy\right) \left( \int_\mathbb{R} (-y)^k e^{a_2 y} \mathbbm{1}_{\mathbb{R}_-}(y)  dy \right) \\
& = \frac1{\sqrt{2\pi}} \|\mathcal{F}\left(\frac{1}{(w-i a_1)^{k+1}}\right)\|_{L^1} \|\mathcal{F}\left(\frac{1}{(w-i a_2)^{k+1}}\right)\|_{L^1}= \frac{ \sqrt{2\pi}}{a_1^{k+1} a_2^{k+1}}.
\end{split}\]
\end{proof}

\begin{rmrk}\label{termbound}
The same technique remains valid to obtain a uniform bound for the approximation functions $v^{(\beta_1,\beta_2)}_\mu$: there exists a positive constant $C^{(\beta)}$ (smaller than 3) such that
\[
\sup_{x,y}\left|v^{(\beta_1,\beta_2)}_\mu(t,x,y) \right|\leq \ \frac{C^{(\beta)} }{1-e^{-\frac{2 z^2}{t}}} \qquad 
\text{and} 
\qquad \sup_{x,y}\left| R_N v^{(\beta_1,\beta_2)}_\mu (t,x,y)\right|\leq \ \frac{C^{(\beta)} }{1-e^{-\frac{2z^2}{t}}} e^{-\frac{2z^2}{t} (N+1)}.
\]
\end{rmrk}

\subsection{Sampling under $\mathfrak{Q}$} \label{samplingQ}

According to Section \ref{limitalgorithm} one can produce the skeleton of random variates from expression (\ref{limitsklaw}), (i.e. sample under $\mathfrak{Q}$) through generalized rejection sampling schemes for both functions $h^{(\theta_1,\theta_2)}$ and $q^{(\theta_1,\theta_2)}$ defined in (\ref{qtheta}).
Thanks to Proposition \ref{boundtheta}, it is now possible to apply the ideas presented in \eqref{fractionrejection}.

Since $v^{(\theta_1,\theta_2)}$ is a bounded series (see \eqref{vtheta}  and \eqref{eq:boundv}), let us denote the renormalized series, its truncation at the $(N+1)-th$ term and the bound for the remainder respectively as
\begin{equation}\label{eq:renormalizedseries}
\bar v^{(\theta_1,\theta_2)}(t,x,y):= \frac{1-e^{-\frac{2 z^2}{t}}}{C} v^{(\theta_1,\theta_2)} (t,x,y),  \quad \bar v_N^{(\theta_1,\theta_2)}(t,x,y), \quad \frak{R}_N \bar v^{(\theta_1,\theta_2)}(t)=e^{-\frac{2 z^2}{t} (N+1)},\\
\end{equation}
where $z$ is the distance between the barriers and $C$ is given in \eqref{BoundCtheta}.
For any fixed $\delta\in(0,1)$, the density ${h^{(\theta_1,\theta_2)}}(y)$ satisfies
\begin{equation}\label{GRSh}\begin{split}
\frac{{h^{(\theta_1,\theta_2)}}(y)}{p_0(\frac{T}{1-\delta}, x_0,y)} & = \underbrace{ \frac{C_{\theta,x_0,T}}{\sqrt{1-\delta}} \ e^{M_B} \frac{C}{1-e^{-\frac{2 z^2}{T}}}}_{C^{\mathcal{H}}} \
\underbrace{ {\frac{e^{-\frac{(y-x_0)^2}{2 T} \delta} \  e^{B(y)-B(x_0)} }{e^{M_B }} \ \bar v^{(\theta_1,\theta_2)}(T,x_0,y) }}_{ f_\delta^{\mathcal{H}}(y)}, \end{split}
\end{equation}
where $C_{\theta,x_0,T}$ is the normalizing constant for the density $h^{(\theta_1,\theta_2)}$ and $M_B \leq \frac{ \|b\|^2_\infty T}{2 \delta}$ is an upper bound for $B(y)-B(x)-\frac{(y-x)^2\delta}{2 T}$. Indeed the parameter $\delta$ is introduced 
to control the possibly unbounded term $e^{B(y)-B(x)}$ with $e^{-\frac{(y-x)^2}{2 T}\delta}$. Moreover an appropriate choice of $\delta$, for each specific case, can make the bound sharper.\\
One obtains a decomposition of the density $y \mapsto q^{(\theta_1,\theta_2)}(t,T,x_1,x_2,y)$ as
\begin{equation}\label{GRSq}
\begin{split} 
\frac{q^{(\theta_1,\theta_2)}(t,T,x_1,x_2,y)}{q_0(t,T,x_1,x_2,y)}
& ={\underbrace{ \frac{ \left(C\right)^2 }{\left(1-e^{-\frac{2 z^2}{t}}\right)\left(1-e^{-\frac{2 z^2}{T-t}}\right)}\frac{1}{v^{(\theta_1,\theta_2)}(T,x_1,x_2)}}_{ C^{\mathcal{B}}}} \ 
\underbrace{\bar v^{(\theta_1,\theta_2)}(t,x_1,y) \ \bar v^{(\theta_1,\theta_2)}(T-t,y,x_2)}_{f^{\mathcal{B}}_{x_1,x_2}(y)}.
\end{split}
\end{equation}
\begin{rmrk} The probability to accept a simulation from the instrumental density is respectively the inverse of $\mathcal{C}^\mathcal{H}$ and $\mathcal{C}^{\mathcal{B}}$. Therefore, it is of great importance to get a bound in Proposition~\ref{boundtheta} and for $M_B$ as small as possible.
\end{rmrk}
Let us show how to control the (finite sum) approximation of $f^{\mathcal{B}}_{x_1,x_2}$ and its rate of convergence, as well as the respective quantities for $f^{\mathcal{H}}_\delta$.
%
\begin{lmm} \label{inputGRS}
There exists a sequence of finite series $(f^{\mathcal{B}}_N)_N$ (resp. $(f^{\mathcal{H}}_N)_N$) that converges pointwise to $f^{\mathcal{B}}_{x_1,x_2}$ (resp. to $f^{\mathcal{H}}_\delta$) for $N\to \infty$ with an exponential rate of convergence. 
\end{lmm}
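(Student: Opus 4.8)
The plan is to construct the approximating finite series $f^{\mathcal{B}}_N$ and $f^{\mathcal{H}}_N$ directly from the truncated renormalized series $\bar v_N^{(\theta_1,\theta_2)}$ introduced in \eqref{eq:renormalizedseries}, and to transport the exponential remainder bound \eqref{eq:boundrest} of Proposition~\ref{boundtheta} through the algebraic combinations appearing in \eqref{GRSh} and \eqref{GRSq}. Concretely, looking at \eqref{GRSq}, the function $f^{\mathcal{B}}_{x_1,x_2}(y)=\bar v^{(\theta_1,\theta_2)}(t,x_1,y)\,\bar v^{(\theta_1,\theta_2)}(T-t,y,x_2)$ is a product of two bounded series, so I would set
\[
f^{\mathcal{B}}_N(y):=\bar v_N^{(\theta_1,\theta_2)}(t,x_1,y)\,\bar v_N^{(\theta_1,\theta_2)}(T-t,y,x_2),
\]
which is a finite series (a finite sum of products of the finitely many terms $\tilde F_{j,k}$), and similarly, from \eqref{GRSh},
\[
f^{\mathcal{H}}_N(y):=\frac{e^{-\frac{(y-x_0)^2}{2T}\delta}\,e^{B(y)-B(x_0)}}{e^{M_B}}\,\bar v_N^{(\theta_1,\theta_2)}(T,x_0,y).
\]
Both are manifestly finite series once one recalls that $\bar v_N$ is itself a finite linear combination of the $\tilde F_{j,k}$, $0\le k\le N$, $j=1,\dots,4$.

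Next I would establish the pointwise convergence with exponential rate. For $f^{\mathcal{H}}_N$ this is immediate: the prefactor $e^{-\frac{(y-x_0)^2}{2T}\delta}e^{B(y)-B(x_0)-M_B}$ is bounded (by $1$, by the very choice of $M_B$ as an upper bound for $B(y)-B(x)-\frac{(y-x)^2\delta}{2T}$), hence
\[
\bigl|f^{\mathcal{H}}_\delta(y)-f^{\mathcal{H}}_N(y)\bigr|\le \bigl|\bar v^{(\theta_1,\theta_2)}(T,x_0,y)-\bar v_N^{(\theta_1,\theta_2)}(T,x_0,y)\bigr|=\bigl|\frak{R}_N\bar v^{(\theta_1,\theta_2)}(T,x_0,y)\bigr|\le e^{-\frac{2z^2}{T}(N+1)}
\]
by \eqref{eq:boundrest} together with the normalization \eqref{eq:renormalizedseries}; in particular the bound is uniform in $y$. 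For $f^{\mathcal{B}}_N$ I would use the standard product estimate $|AB-A_NB_N|\le|A-A_N||B|+|A_N||B-B_N|$ with $A=\bar v^{(\theta_1,\theta_2)}(t,x_1,y)$, $B=\bar v^{(\theta_1,\theta_2)}(T-t,y,x_2)$: since the renormalized series $\bar v$ and its truncations $\bar v_N$ are all bounded by $1$ (again from \eqref{eq:boundv}–\eqref{eq:boundrest} and \eqref{eq:renormalizedseries}, noting $\bar v_N = \bar v - \frak{R}_N\bar v$ so $|\bar v_N|\le 1+e^{-2z^2(N+1)/\cdot}\le 2$), one gets
\[
\bigl|f^{\mathcal{B}}_{x_1,x_2}(y)-f^{\mathcal{B}}_N(y)\bigr|\le e^{-\frac{2z^2}{t}(N+1)}+2\,e^{-\frac{2z^2}{T-t}(N+1)}\le 3\,e^{-\frac{2z^2}{T}(N+1)},
\]
using $t\le T$ and $T-t\le T$, which is again uniform in $y$ and exponentially decaying in $N$.

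The only genuinely delicate point — and the one I would be careful to state correctly — is what "finite series" means here: one must check that the truncation $\bar v_N^{(\theta_1,\theta_2)}$ really is computable as a finite object, i.e. that each $\tilde F_{j,k}(\omega_{j,k},\frak a\sqrt t)$ is given in closed form by \eqref{eq:seriestheta} (a finite sum built from the explicit functions $\mathscr{G}_{K,m,n}$, $\mathscr{F}$, $\mathcal{J}_q$), and that the product $f^{\mathcal B}_N$ of two such truncations, while no longer indexed by a single $k$, is still a finite sum of elementary terms — hence legitimately "a finite series" in the sense needed to feed a rejection sampler. Everything else is a routine triangle/product inequality powered by Proposition~\ref{boundtheta}; no new analytic input is required. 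I would therefore present the proof as: (i) define $f^{\mathcal B}_N$, $f^{\mathcal H}_N$ as above; (ii) invoke \eqref{eq:boundv}–\eqref{eq:boundrest} and \eqref{eq:renormalizedseries} to bound $\bar v$, $\bar v_N$ and the remainder; (iii) apply the two displayed estimates to conclude uniform exponential convergence, a fortiori pointwise.
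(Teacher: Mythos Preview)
Your proposal is correct and follows essentially the same route as the paper: you define $f^{\mathcal{H}}_N$ and $f^{\mathcal{B}}_N$ via the truncations $\bar v_N^{(\theta_1,\theta_2)}$ exactly as in \eqref{f:rejbridges}--\eqref{f:rejh}, and you transport the remainder bound \eqref{eq:boundrest} through the product via a triangle-type inequality. The only difference is cosmetic: the paper expands $(\bar v_N + R\bar v_N)(\bar v_N + R\bar v_N)$ and, using implicitly that $|\bar v_N(t,\cdot,\cdot)|\le 1-e^{-\frac{2z^2}{t}(N+1)}$, obtains the slightly sharper constant $\frak{R}_N^{\mathcal{B}}=e^{-\frac{2z^2}{t}(N+1)}+e^{-\frac{2z^2}{T-t}(N+1)}-e^{-2z^2(\frac{1}{t}+\frac{1}{T-t})(N+1)}$, whereas your cruder bound $|\bar v_N|\le 2$ gives a constant $3$ in front---both are exponentially decaying and suffice for the lemma.
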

\begin{proof}
One has to find functions $f^{\mathcal{H}}_N$ (resp. $f^{\mathcal{B}}_N$) and an exponentially decreasing sequence $\frak{R}_N^{\mathcal{H}}$ (resp. $\frak{R}_N^{\mathcal{B}}$) such that, for all $y\in \mathbb{R}$, $\left|f^{\mathcal{H}}_\delta(y) - f^{\mathcal{H}}_N(y)\right|\leq \frak{R}_N^{\mathcal{H}}$ (resp. $\left|f^{\mathcal{B}}_{x_1,x_2}(y) - f^{\mathcal{B}}_N(y)\right|\leq \frak{R}_N^{\mathcal{B}}$).\\
Proposition $\ref{boundtheta}$ yields to the following choices
\begin{equation} \label{f:rejbridges}
\begin{cases}
f^{\mathcal{B}}_N := \bar v_N^{(\theta_1,\theta_2)}(t,x_1,y) \cdot  \bar v_N^{(\theta_1,\theta_2)}(T-t,y,x_2) \\
\frak{R}_N^{\mathcal{B}} :=  e^{- \frac{2 z^2}{t}(N+1)} +e^{- \frac{2 z^2}{T-t} (N+1)} -  e^{-2 z^2 \left(\frac{1}{t}+\frac{1}{T-t}\right)(N+1)}
\end{cases}
\end{equation}
and
\begin{equation}\label{f:rejh}\begin{cases}
 f^{\mathcal{H}}_N = \disp \bar v_N^{(\theta_1,\theta_2)}(T,x_0,y) \ \exp{\Big(B(y)-B(x_0)-\frac{(y-x_2)^2}{2T} \delta - M_B\Big)}\\
 \frak{R}_N^{\mathcal{H}} = \disp \frak{R}_N \bar v^{(\theta_1,\theta_2)}(T).
\end{cases}\end{equation}
where $\bar v_N$ and $\frak{R}_N \bar v $ are given in \eqref{eq:renormalizedseries}.\\
Notice that the series $f_N^{\mathcal{H}}$ contains $N+1$ terms, but $f^{\mathcal{B}}_N$ contains $(N+1)^2$ terms, since it is the product of two truncated series, each one with $N+1$ terms. Indeed, if $R \bar v_N^{(\theta_1,\theta_2)}$ is the remainder of $\bar v_N^{(\theta_1,\theta_2)}$, one can write
\[\begin{split} 
f^{\mathcal{B}}_{x_1,x_2} & = \left(\bar v_N^{(\theta_1,\theta_2)}(t,x_1,y) +  R \bar v_N^{(\theta_1,\theta_2)}(t,x_1,y)\right)
\left( \bar v_N^{(\theta_1,\theta_2)}(T-t,y,x_2) +  R \bar v_N^{(\theta_1,\theta_2)}(T-t,y,x_2)\right)\\
& = \left(\bar v_N^{(\theta_1,\theta_2)}(t,x_1,y) \cdot  \bar v_N^{(\theta_1,\theta_2)}(T-t,y,x_2) \right) + \left( \bar v_N^{(\theta_1,\theta_2)}(t,x_1,y) \cdot  R \bar v_N^{(\theta_1,\theta_2)}(T-t,y,x_2) + \right.\\
& \left. \quad + \bar v_N^{(\theta_1,\theta_2)}(T-t,y,x_2) \cdot R \bar v_N^{(\theta_1,\theta_2)}(t,x_1,y) + R \bar v_N^{(\theta_1,\theta_2)}(t,x_1,y) \cdot  R \bar v_N^{(\theta_1,\theta_2)}(T-t,y,x_2)\right).
\end{split}\]
\end{proof}


\section{Numerical simulations}
In this section we provide some numerical simulations for $\mathbb{P}_b$, solution to (\ref{sdeb}). We also measure the performance of the exact simulation method in comparison with the classical Euler-Maruyama method, which has the disadvantage to {\it smooth} the effect of the discontinuous drift.

The section is organized as follows. First we give the pseudo-code for the generalized rejection sampling method adapted for densities which are series whose remainder admits an eventually monotonically decreasing bound.
This pseudo-code is the one used for sampling from the densities $h^{(\theta_1,\theta_2)}$ and $q^{(\theta_1,\theta_2)}(t,T,x_1,x_2,\cdot)$ given in  \eqref{qtheta}.
We devote the second part to the pseudo-code of the retrospective rejection sampling (see Section \ref{EAscheme}), which relies on the simulation from the finite-dimensional distributions of the instrumental measure $\mathfrak{Q}$ given by (\ref{limitsklaw}). In order to minimize the necessary CPU time we propose two different ways to apply the scheme. In the last part we illustrate our simulations.


\subsection{The generalized rejection sampling for sampling under $\mathfrak{Q}$ (GRS)}

In Algorithm \ref{GRS} we give the pseudo-code of the generalized rejection sampling (GRS) method presented in \cite{DMR}, which allows to sample a random variable from an instrumental random variable once the bounded ratio between the densities is an {\it infinite} series whose remainder is eventually monotonically decreasing. If $g(x)$ and $h(x)$ are respectively the instrumental density (w.r.t. the Lebesgue measure) and the density from which one would like to sample, then let us denote by $f(x)$ the function ratio $\disp \frac{1}{m} \frac{h(x)}{g(x)}$, where $m$ is an upper bound of the function $\disp \frac{h}{g}$.

The Algorithm \ref{GRS} requires the following quantities and functionals:
\begin{itemize}
\item {\color{blue}$g$}: instrumental density under which it is known how to sample,
\item {\color{blue}$N_{max}$}: the maximal number of terms of the series one decides to consider,
\item {\color{blue}$(f_N)_{N=0,\ldots,N_{max}}$}: the partial sums of the series $f$,
\item {\color{blue} $(\frak{R}^f_N)_{N=0\ldots,N_{max}}$}: decreasing sequence of bounds for the remainder $f-f_N$,
\item {\color{blue} $I_\frak{R} ^f$}: a (piecewise constant) non decreasing function $(0,1) \to \{0,\ldots,N_{max}\}$, inverse of $\frak{R}^f_N$:\\ $I_\frak{R} ^f(u) = \inf \{N \leq N_{max}: \frak{R}^f_N \leq u\}$,
\end{itemize}

\begin{algorithm}[H]
\SetKwInOut{Input}{Input}
\SetKwInOut{Output}{Output}
\caption{The generalized rejection sampling \textbf{GRS} \label{GRS}}
\Input{$g$, $N_{max}$, $(f_N)_N$; $(\frak R^f_N)_N$, $I_\frak{R}^f$.}
\Output{$x$,exact: $x$ is a sample from the desired density 
and exact is True if the simulation is exact or False if it is not exact.}

reject $\leftarrow$ True\;
\While{\emph{reject}}{
	sample a standard uniform $u$\;
	sample $y$ from $g$\;
	$N \leftarrow 0$\;
	\While{ $|f_N(y)-u|<\frak{R}^f_N$ and $N<N_{max}$}{
		$N \leftarrow I_\frak{R}^f(|f_N(y)-u|)$\;
	}
	\If{$f_N>u$}{
		reject $\leftarrow$ False \;
		x $\leftarrow$ y and exact $\leftarrow$ True\;
	}
	\If{$N=N_{max}$}{
		reject $\leftarrow$ False\;
		x $\leftarrow$ y and exact  $\leftarrow$ False\;
	}
}		
\Return $x$ and exact
\end{algorithm}

\vspace{1cm}

To obtain the exact simulation, that is the acceptance or rejection of each sample, one may have to consider a big number of terms of the series $f_N$. Therefore we decide to fix the integer $N_{max}=I_\frak{R}^f(0.00005)$, i.e. the smallest integer $N$ such that $2 \frak{R}^f_{N}$ is smaller than 0.0001. Indeed the probability that for all $N \leq N_{max}$ one has not been able to accept or reject a sample is smaller than twice the bound $\frak{R}^f_{N_{max}}$. 
All sequences $(\frak{R}^f_{N})_N$ considered in this document are exponentially decreasing and all simulations turn out to be exact. Moreover the procedure to accept or reject is really fast since usually it is done by computing only $f_0,f_1$ and $f_2$.

Clearly, the \textbf{GRS} enables us to sample from the finite-dimensional distributions of $\mathfrak{Q}$ (\ref{limitsklaw}). In particular we will use it to sample from the densities $h^{(\theta_1,\theta_2)}(T,x_0,\cdot)$ and $q^{(\theta_1,\theta_2)}(t,T,x_1,x_2,\cdot)$ in (\ref{qtheta}) respectively with instrumental densities the two Gaussian densities $p_0\left(\frac{T}{1-\delta},x_0,\cdot\right)$ and $q_0(t,T,x_1,x_2,\cdot)$.\\
The sequence $(f_N, \ \frak{R}^f_N)_{N\leq N_{max}}$ for $h^{(\theta_1,\theta_2)}$ and $q^{(\theta_1,\theta_2)}$ are provided respectively by \eqref{f:rejh} and \eqref{f:rejbridges}, where $v^{(\theta_1,\theta_2)}_{N} (t,x,y)$ is the truncation at the $N-$th term of the series given by (\ref{vtheta}) and (\ref{eq:seriestheta}). The piecewise constant functions $I_\frak{R}^f$ are easy to compute explicitly since the bounds $\frak{R}^f_N$ decrease exponentially.

\begin{rmrk} Extending Remark~\ref{termbound}, the \textbf{GRS} can be used also to simulate the skew Brownian motions with drift and with two semipermeable barriers.
\end{rmrk}

\subsection{The retrospective rejection sampling}
%

The following algorithm describes the scheme that returns the pair $(T+t_0, X_{T})$. $X_{T}$ is a sample of the solution $\mathbb{P}_b$ on $[t_0,T+t_0]$ of (\ref{sdeb}) at the final time $T+t_0$. The sample is obtained through retrospective rejection sampling from the finite-dimensional distributions of the instrumental measure $\mathfrak{Q}$. The algorithm recalls the external function \textbf{GRS}.

The algorithm needs some parameters derived from the given drift $b$, such as the (half) heights $\theta_1,\theta_2$ of the two jumps, the bounded non negative function $\phi^+_b$ defined in (\ref{phib}) and its sup norm $\|\phi^+_b\|_\infty$.

\begin{algorithm}[H]
\caption{The retrospective rejection sampling \textbf{RRS}}
\SetKwInOut{Input}{Input}\SetKwInOut{Output}{Output}
\Input{The starting point $x_0$ (at the time $t_0$), the time increment $T$;}
\Output{A sample of $X_{T}$ at the final time $T+t_0$.}
reject $\leftarrow$ True\;
\While{\emph{reject}}{
	reject $\leftarrow$ False\;
	simulate a Poisson Point Process on $[0,T]\times[0,\|\phi_b^+\|_\infty]$: $(\tau_k, x_{\tau_k})_{k=1,\ldots,M}$\;
	sample $X_T$ from the density $h^{(\theta_1,\theta_2)}$ through \textbf{GRS}\;
	$\tau_0 \leftarrow 0$, $y_0 \leftarrow x_0$\;
	\For{$k=1$ \KwTo M }{
		sample $y_k$ through \textbf{GRS} for the bridge density $q^{(\theta_1,\theta_2)}$ connecting $(\tau_{k-1},y_{k-1})$ and $(T,X_T)$ \;
		\If{$\phi_b^+(x_{\tau_k})>y_k$}{
			reject $\leftarrow$ True\;
			exit from this cycle and start again\;
			}
		 }	
}
\Return $X_T$.
\end{algorithm}

\begin{rmrk} The algorithm can actually return the skeleton of the Brownian motion with drift $b$: the vectors $(t_0,\tau_1+t_0,\ldots, \tau_M+t_0,T+t_0), (x_0,y_1,\ldots,y_M,X_T)$. One can then add to the skeleton the simulation at any time instance $t$ in $(t_0,T+t_0)$ using the bridges dynamics.
\end{rmrk}

\begin{rmrk}
It is possible to sample the Poisson point process simulated on the rectangle $[t_0,T+t_0]\times[0,\|\phi_b^+\|_\infty]$ progressively within the rejection procedure. As a consequence one obtains a more efficient algorithm.
\end{rmrk}

In order to make the algorithm more efficient one notices that, if the time increment $T$ is large, there will be a large number of Poisson points which slows down the algorithm since it can reject more often. Using the Markov property of the considered process, one can split the time increment into congruent time intervals of length $t$ smaller than a fixed value $T_{el}$. One then applies the \textbf{RRS} on the different time intervals of length $t$ with new initial conditions given by the ending point on the previous interval. We will call \textbf{SRRS} the split RRS obtained choosing  $T_{el}\leq \|\phi_b^+\|_\infty^{-1}$ in a convenient way in order to minimize the computational times.
If $T_{el}$ were $\|\phi_b^+\|_\infty^{-1}$, the Poisson process on the rectangle in \textbf{RRS} would have intensity 1. Therefore, with probability higher than $e^{-1}$ (when the Poisson process on $[0,t]\times [0,\|\phi_b^+\|_\infty]$ has an empty realization), one avoids to apply \textbf{GRS} for bridges.\\
We are proposing the next pseudo-code whose inputs are the same as for the algorithm \textbf{RRS}.
  
\vspace{0.5cm}

\begin{algorithm}[H]
\caption{The split retrospective rejection sampling \textbf{SRRS} \label{SRRS}}
\SetKwInOut{Input}{Input}\SetKwInOut{Output}{Output}
\Input{The initial conditions ($t_0,\ x_0$),\\
the time increment $T$,\\
the function $\phi^+_b$ and its upper bound $m$ (see (\ref{phib}));}
\Output{A sample of $X_{T}$ at the final time $T+t_0$.} 
Split the interval $[0,T]$ in $m'$ congruent intervals of length smaller than $T_{el}=m^{-1}$\;
\ForAll{$j=1,\ldots,m'$}{
obtain $X_{T_{el}*j}$ through the \textbf{RRS} with input the time interval $T_{el}$ and the initial conditions $(T_{el}*(j-1),x_0)$\;}
\Return $X_T$
\end{algorithm}

\vspace{0.3cm}

We will compare in the next subsection the CPU times using the retrospective rejection sampling described above for some piecewise smooth drift $b$ defined by (\ref{piecewiseconstantdrift}), see Figure \ref{CPUtimes}.\\
We coded in Python and executed the programs on a personal computer equipped with an Intel Core i5 processor, running at 2.5 Ghz.

\subsection{Simulations}
%

Let us consider two examples of Brownian diffusions starting from $x_0=0.5$ whose drift $b_1$ (resp. $b_2$) is piecewise smooth and admits two discontinuity points at $0$ and $z=1$:
\begin{equation}\label{piecewiseconstantdrift}
\bar b_1(x):=
\begin{cases} 
 0  &  x< 0\\
 1 & 0<x< 1\\
 0 & x>1
\end{cases}, \qquad 
\bar b_2(x):=
\begin{cases} 
 - 2 \cos(x)  &  x< 0\\
 \sin(x) & 0<x< 1\\
 \cos(x-z)+sin(z) & x>1
 \end{cases}, 
\end{equation}
and $\bar b_1(0)=\frac12, \bar b_1(1)=\frac12$ as in (\ref{bzi}), and $\theta_1=1/2=-\theta_2$ (resp. $\bar b_2(0)=-1, \bar b_2(1)=\frac12$, and $\theta_1 = 1$ and $\theta_2 = 1/2$). (Figure~\ref{Driftb2} represents the drift $\bar b_2$.)

In each case we need to choose the time length $T_{el}$ (to apply the \textbf{SRRS}, see Algorithm~\ref{SRRS}) and the parameter $\delta \in (0,1)$ appearing in \eqref{GRSh}. In the first case we fixed $T_{el}=0.55$ ($\leq \|\phi^+_{\bar b_1}\|_\infty^{-1}=2$) and $\delta=0.75$. In the secund case we choose $T_{el}=0.2$ and $\delta=0.6$. Let us briefly explain how we took our decision.\\
Once $T_{el}$ has been fixed, we choose $\delta$ such that the quantity $\frac12 \|\bar b_1\|_\infty^2 \frac{T_{el}}\delta$ is as small as possible (hence we can take $M_B$ in \eqref{GRSh} equal to it). Indeed the sharpness of this quantity and of the constant $C$ in \eqref{eq:boundv} determines the probability to accept a sample from the instrumental density as a sample from $h^{(\theta_1,\theta_2)}$. The latter probability is the inverse of the constant $C^{\mathcal{H}}$ given explicitly in \eqref{GRSh}. Let us recall that the constant $C$ is a factor of the quantity $C^{\mathcal{B}}$ (see \eqref{GRSq}) as well, hence determines the probability of accepting a sample of a Brownian bridge as a sample from the desired density.\\
The choice of $T_{el}$ is more delicate and it is based on the computational time of the algorithm \textbf{RRS}. The algorithm \textbf{SRRS} splits the given interval $(0,T)$ into intervals of length between $\frac12 T_{el}$ and $T_{el}$ and applies \textbf{RRS}. We choose $T_{el}$ such that \textbf{RRS} is faster on $( \frac12 T_{el}, T_{el})$.

\begin{figure}
\begin{center}
\subfigure[$X_T$ is the Brownian diffusion with drift $\bar b_1$ at time $T$]{\includegraphics[width=8cm]{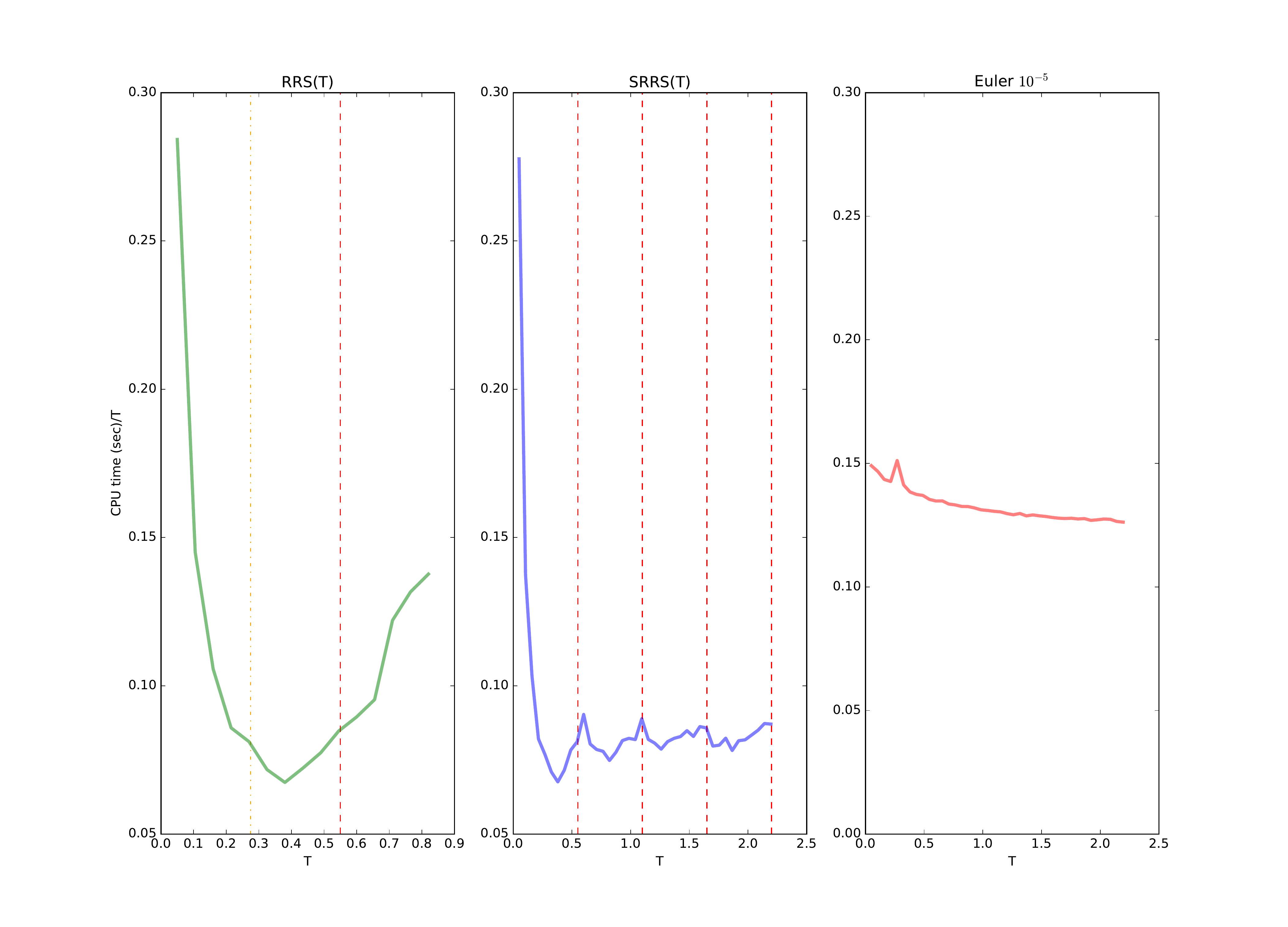}}
\subfigure[$X_T$ is the Brownian diffusion with drift $\bar b_2$ at time $T$]{\includegraphics[width=8.1cm]{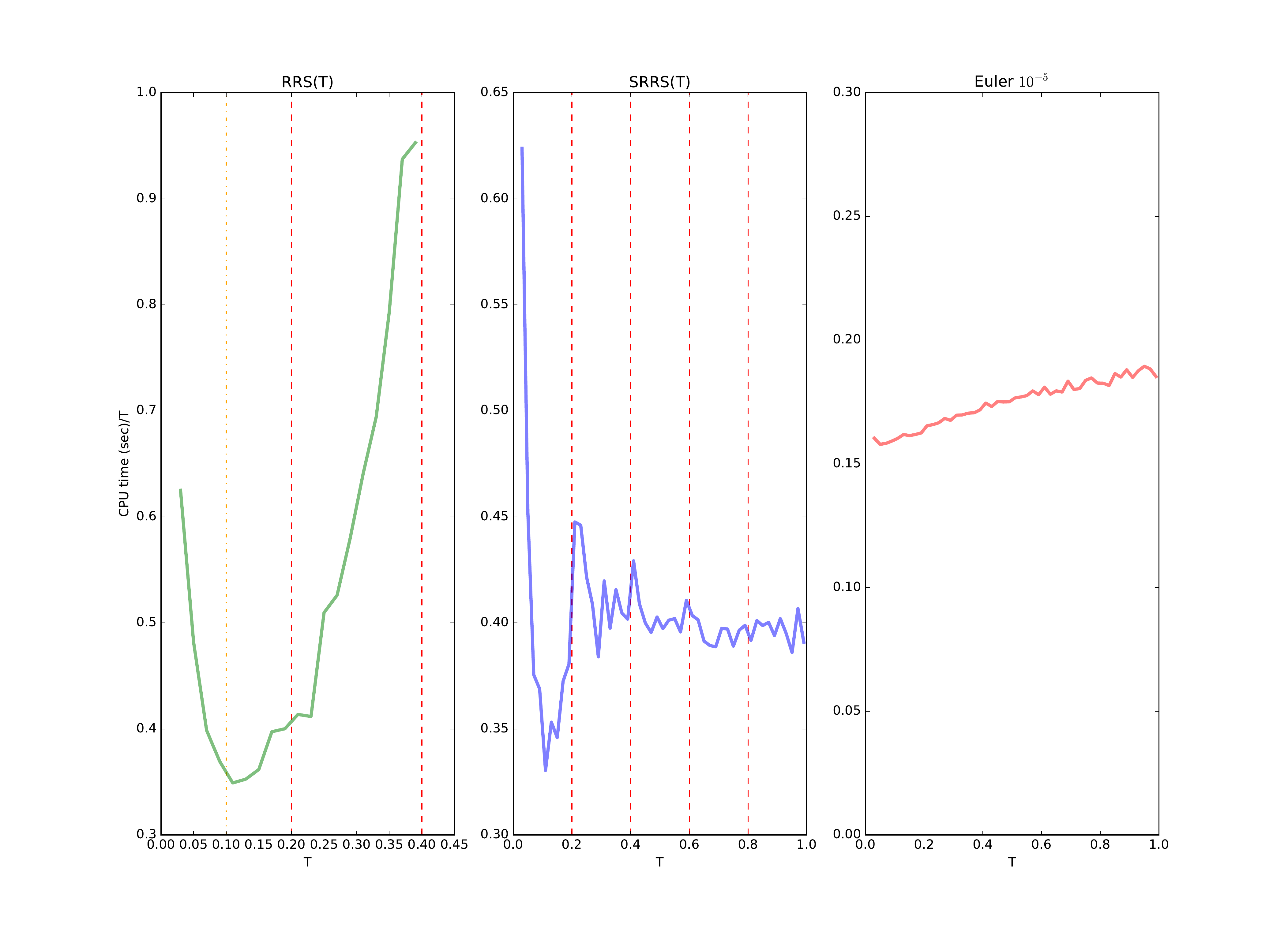}}
\end{center}
\caption{If $X_T$ is the Brownian diffusion with drift $\bar b$ at time $T$, then the curves represent the ratio between the CPU times to sample $X_T$ and the time $T$ with the different methods. The (red) dashed lines represent the multiples of the maximal length for a split interval in the algorithm SRRS ($T_{el} \leq \|\phi^+_{\bar b}\|_\infty^{-1}$). The curve obtained through the RRS method reaches its minimum between the first two dashed lines (orange and red). We considered 1000 simulations and computed the average time.}\label{CPUtimes}
\end{figure}

The CPU time for the RRS does not grow linearly with the time, as one can easily notice from Figure~\ref{CPUtimes}. The Euler-Maruyama method and SRRS instead show an asymptotic linear growth of the CPU time as function of the time $T$. Sometimes the growth factor of SRRS is considerably faster, and sometimes slower as in the cases of the drift $\bar b_2$. This is due to the nature of the drifts and to the quality of the bounds.

\begin{figure}
\begin{center}
\subfigure[The density of the Brownian diffusion $\mathbb{P}_{\bar b_1}$ at time 1. Bandwidth=0.1. $T_{el}=0.55, \ \delta=0.75$.]{\includegraphics[width=8cm]{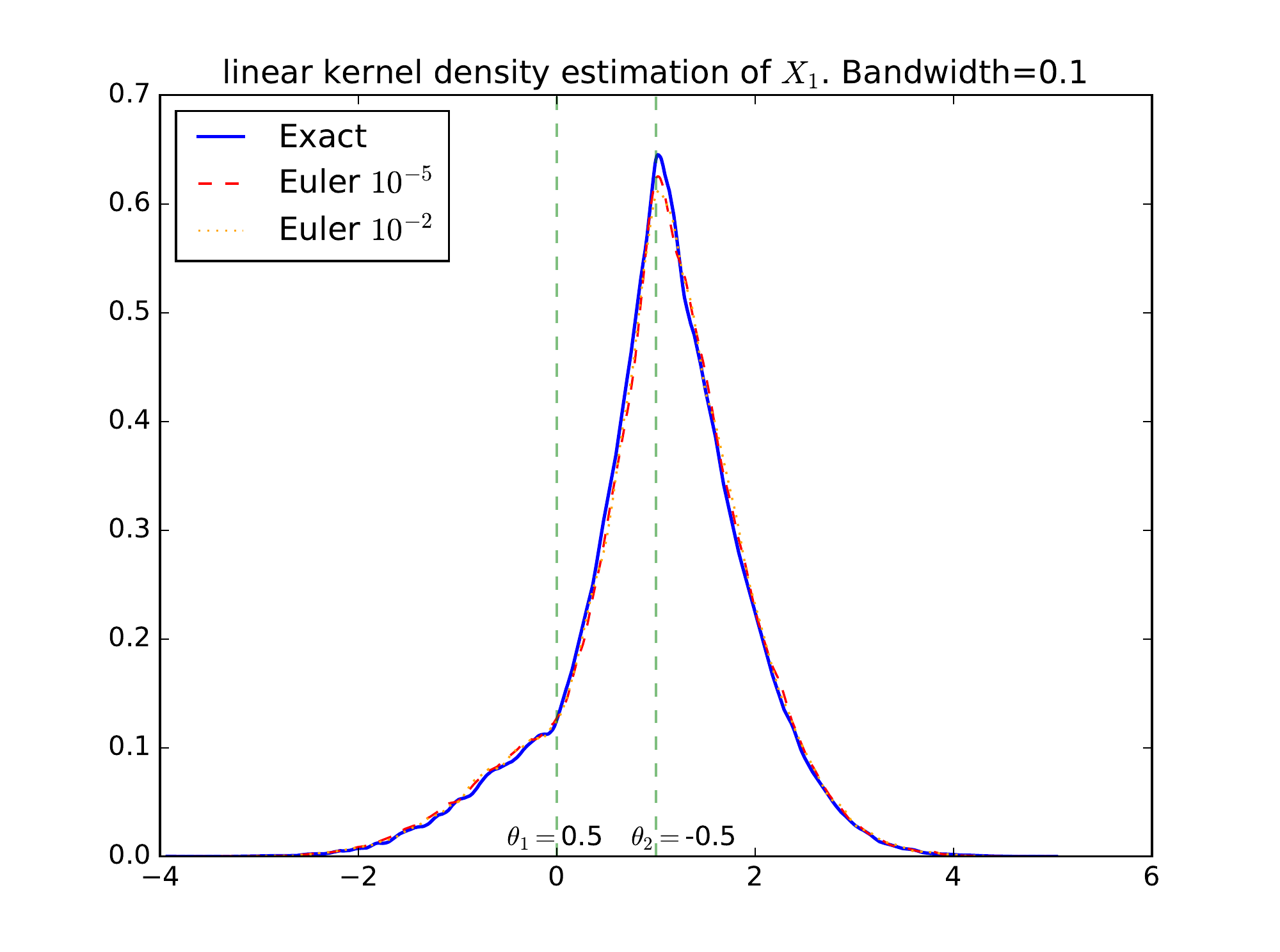}}
\subfigure[The density of the Brownian diffusion $\mathbb{P}_{\bar b_2}$ at time $0.6$. $T_{el}=0.2$, $\delta=0.6$.]{\includegraphics[width=8cm]{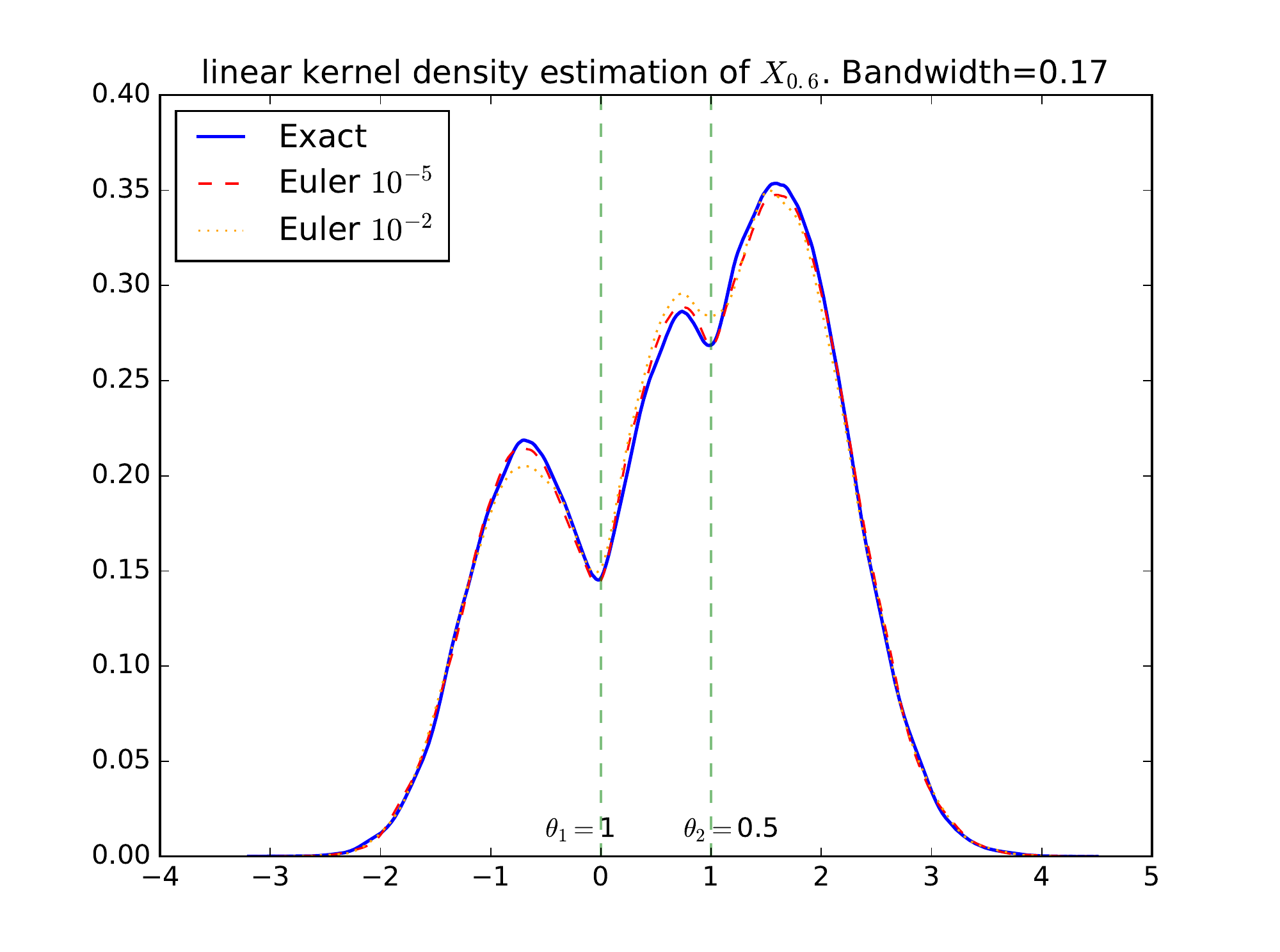}}
\end{center}
\caption{The kernel density estimations of the density obtained from $10^5$ samples with the Euler-Maruyama scheme ($10^{-5}$ dashed line, and $10^{-2}$ dotted line) and the exact \textbf{SRRS} algorithm (with $T_{el}$ and $\delta$ chosen above). The dashed vertical lines represent the points of discontinuity of the drift. The process has fixed initial condition $X_0=0.5.$}\label{ComparisonEuler}
\end{figure}

Figure~\ref{ComparisonEuler} shows that the Euler-Maruyama scheme needs a very small discretization step to be more precise at the discontinuities of the drift. \\
Considering the drift $\bar b_1$ which is a indicator function, our simulation of the $10^5$ samples is exact and it is much faster than Euler-Maruyama with discretization step $10^{-5}$. The average time for a single sample is respectively of 0.09 and 0.18 seconds. The Euler-Maruyama scheme with step $10^{-2}$ instead is faster ($0.0002$ seconds). \\
In the other considered case, the Euler-Maruyama method is faster. For the drift $\bar b_2$, the average CPU times for one simulation of $X_{0.6}$, is $0.29 \,s $  and $0.1 \, s$ for the exact simulation through the \textbf{SRRS} and the finest Euler-Maruyama respectively. To obtain $X_{0.6}$ the \textbf{SRRS} has applied three times \textbf{RRS} to intervals of length $0.2$. 

Let us quantify the average number of rejections for a single sample from the densities $h^{(\theta_1,\theta_2)}(y)$ (see \eqref{GRSh}). For $\bar b_1$ with $T=T_{el}=0.55$, it is around $5$; indeed the probability of accepting a sample is the inverse of the constant $C^{\mathcal{H}}$ and it is about 0.196.  
For the densities $q^{(\theta_1,\theta_2)}(t,T,x_0,x_2,\cdot)$, $t<T$, the average number of rejections is around 2.
Moreover, as expected, the number of terms of the series necessary to the decision (accept or reject) its rarely bigger than 1 and the average is around 1. This holds also in the case of the drift $\bar b_2$.
Finally the average number of path rejection for each RRS in the SRRS is slightly larger than 5.

Let us consider the drift $\bar b_2$. In this case the average number of rejections for a single sample from the densities $h^{(\theta_1,\theta_2)}(y)$ and $q^{(\theta_1,\theta_2)}(t,T,x_0,x_2,\cdot)$, $t<T$ are respectively around $7$ and $6$. Finally the average number of path rejection for each RRS in the SRRS is slightly bigger than 9.

Figure~\ref{pathStep} shows the realization of a path of the Brownian diffusion with drift $\bar b_1$. The path has been simulated as follows: given the skeleton with SRRS, it is sampled at each time of the discretization of step $10^{-3}$ following the bridges dynamics.\\


\begin{figure}[t]\centering
\begin{minipage}{0.45\textwidth}
\centering
\includegraphics[width=8cm]{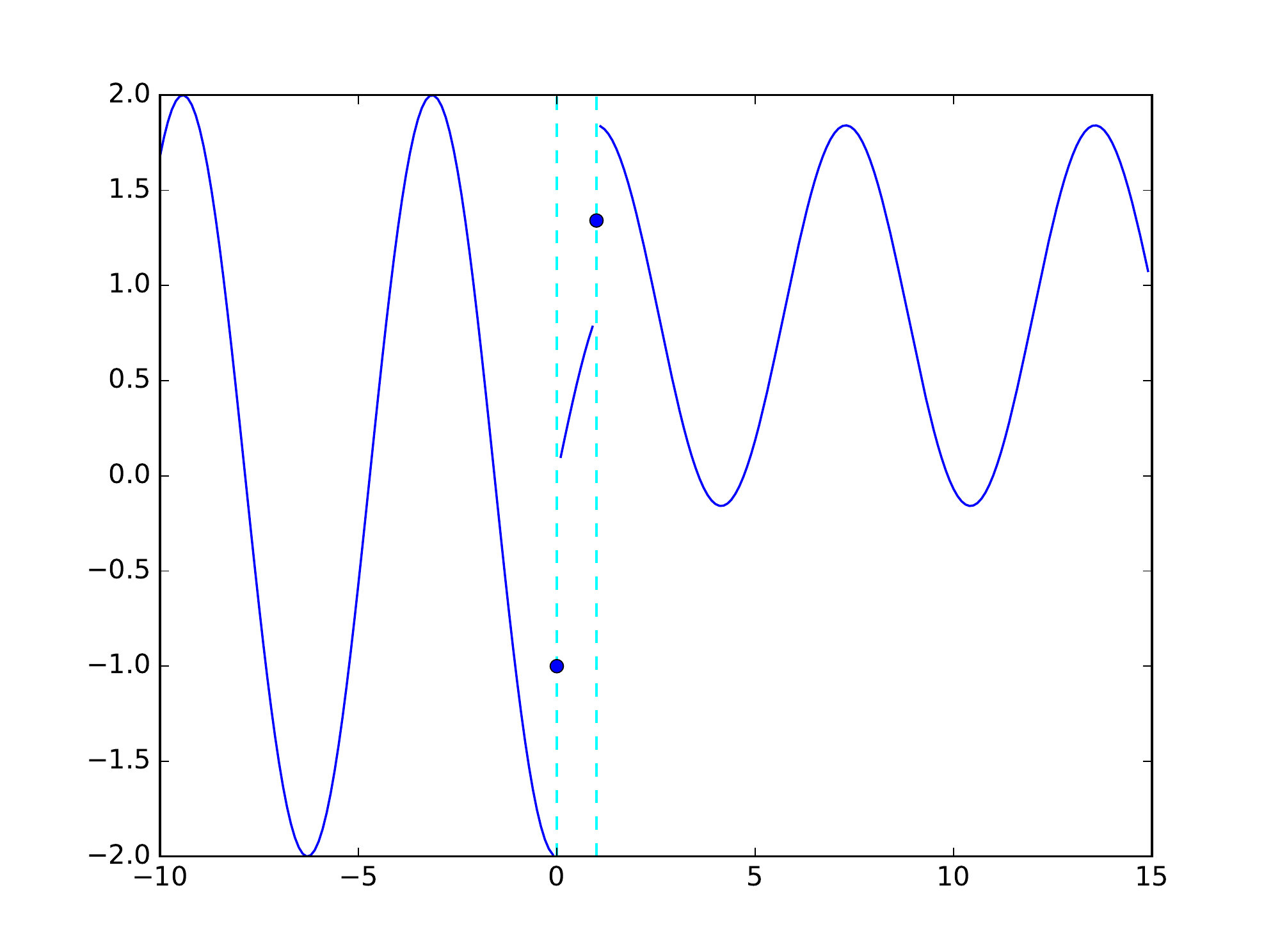}
\caption{The drift $\bar b_2$.}\label{Driftb2}
\end{minipage}
\hfill
\begin{minipage}{0.45\textwidth}
\includegraphics[width=8cm]{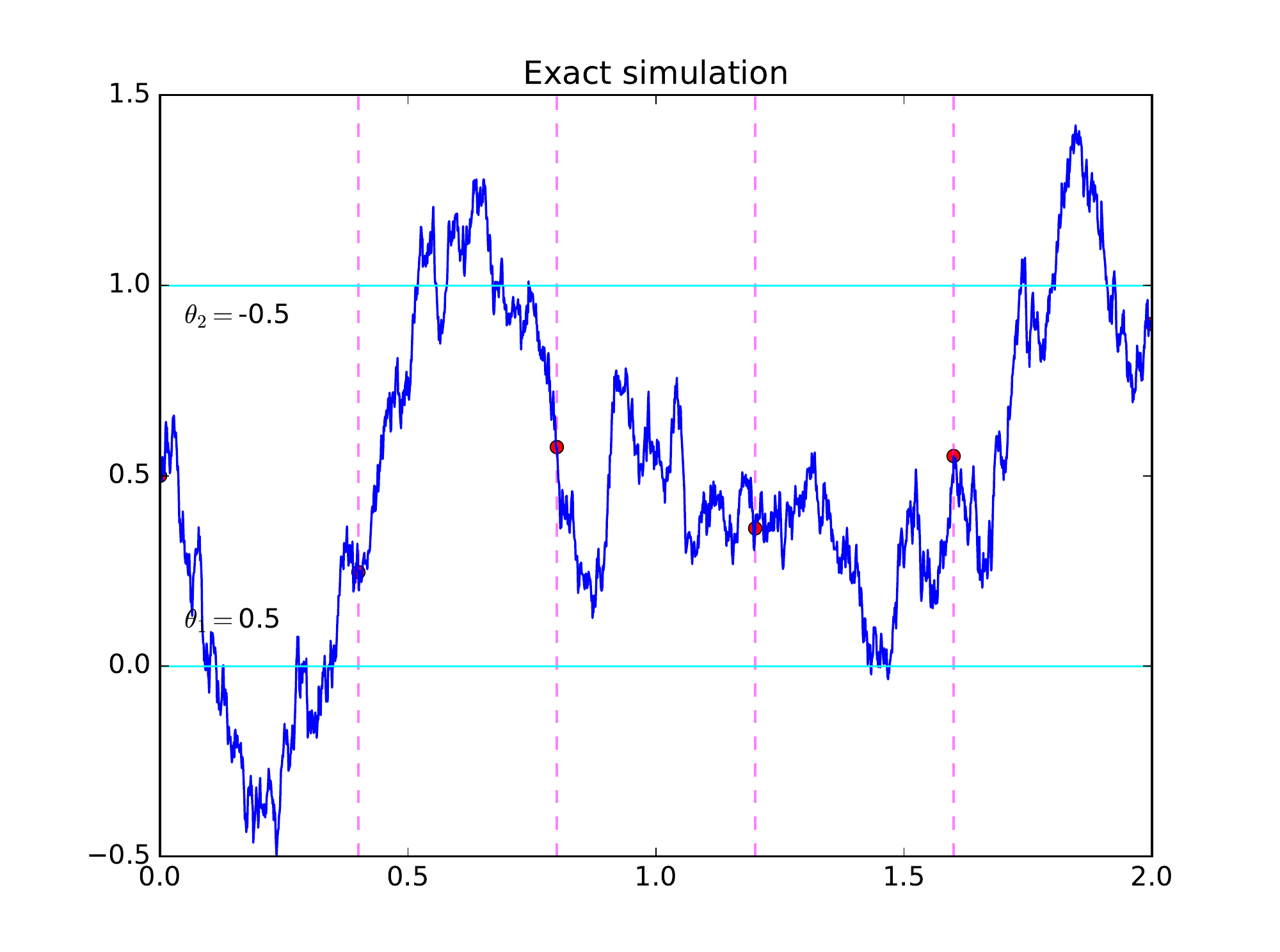}
\caption{A realization of a path $t \mapsto X_t$ up to time $T=2$ under $\mathbb{P}_{\bar b_1}$. The horizontal lines represent the points of discontinuity of the drift. The dots on the path are the skeleton of the process obtained by \textbf{SRRS} which splits $[0,2]$ in five subintervals delimited by the dotted vertical lines.} \label{pathStep}
\end{minipage}
\end{figure}


\vspace{1cm}
\textbf{Acknowledgements:}
The authors acknowledge the Deutsch-Franz\"{o}sische Hochschule - Universit\'e Franco-Allemande (DFH-UFA) and the Research Training Group 1845 {\it Stochastic Analysis with Applications in Biology, Finance and Physics} for their financial support.\\
The authors would like to warmly thank Pierre \'Etor\'e, Antoine Lejay and Lionel Len\^otre for fruitful discussions on the topic treated here.

\addcontentsline{toc}{section}{Bibliography}

\bibliographystyle{plain}
\bibliography{exact_algorithm_bibliography}

\end{document}